\documentclass[12pt, a4paper]{article}
\usepackage{amsmath}
\usepackage{amsfonts}
\usepackage{amssymb}
\usepackage{amsthm}

\title{Representations of the quantum torus and applications to finitely 
presented groups}

\author{C. J. B. Brookes and J. R. J. Groves}

\numberwithin{equation}{section}

\newtheorem{defin}{Definition}
\newtheorem{lemma}{Lemma}[section]
\newtheorem{thm}[lemma]{Theorem}

\newtheorem*{thm*}{Theorem}
\newtheorem*{lemma*}{Lemma}
\newtheorem{prop}[lemma]{Proposition}
\newtheorem{coroll}[lemma]{Corollary}

\DeclareMathOperator{\Ann}{Ann}
\DeclareMathOperator{\rank}{rk}
\DeclareMathOperator{\Hom}{Hom}
\DeclareMathOperator{\Supp}{Supp}

\DeclareMathOperator{\im}{im}
\DeclareMathOperator{\kdim}{kdim}

\newcommand{\ore}{\"Ore }
\newcommand{\hol}{strongly holonomic}
\def\mbl{11cm}
\newcommand{\comment}[1]{}
\begin{document}
\maketitle

\section{Introduction}

The first strand of this paper concerns the (algebraic) quantum torus. 
By this we mean the crossed product $FA$ of a field $F$ with
a free abelian group $A$ of finite rank, and much of the time we shall assume 
that $F$ is the centre of $FA$. 

The representation theory of such crossed products depends heavily on 
the image in the multiplicative group of $F$ of the 2-cocycle used to define
the crossed product. For example in \cite{Brcp} the first author showed that 
the global dimension is equal to the maximal rank of a subgroup $B$ for 
which the sub-crossed product $FB$ is commutative. 
It is natural to concentrate on impervious $FA$-modules, those non-zero 
modules that contain no non-zero submodule induced from a module 
over a sub-crossed product $FB$ for a subgroup $B$ of infinite 
index in $A$.
(A consequence of this condition when $F$ is the centre of $FA$ is 
that $M$ is $FB$-torsion-free for every subgroup $B$ of $A$ for which 
the sub-crossed product $FB$ is commutative.)

In \cite{Brcp} it is shown that, under the assumption that the centre of $FA$
is exactly $F$, the Gelfand-Kirillov dimension, $\dim M$, of any
impervious $FA$-module $M$ is at least one half of the rank of $A$. 
This is analogous to Bernstein's inequality for Weyl algebras. 
Following the terminology used for Weyl algebras, we shall say that 
an $FA$-module $M$ is {\it strongly holonomic} if it is impervious 
and $\dim M$ is exactly one half of the rank of $A$. 
We prove the following:
\begin{thm*} [Theorem 4.2] Let $M$ be a \hol\  $FA$-module. Then
there is a subgroup of finite index in $A$ having the form $$
A_1\oplus\dots\oplus A_t$$ so that if $i\neq j$ then $FA_i$ commutes
with $FA_j$ and the 2-cocycles defining the crossed products $FA_i$
have infinite cyclic image in the multiplicative group of F, and these 
images of 2-cocycles are non-commensurable if $i\neq j$.

Further, if we consider
$M$ as $FA_i$-module then there exist  $FA_i$-submodules which are
\hol.
\end{thm*}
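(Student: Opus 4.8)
The plan is to recast everything in terms of the commutator bicharacter and then argue by induction on $\rank A$. Put $n=\rank A$ and let $\beta\colon A\times A\to F^{\times}$ be the commutator bicharacter, defined by $\tilde a\tilde b=\beta(a,b)\tilde b\tilde a$ on chosen unit preimages; since $F$ is the centre, $\beta$ is nondegenerate, and $\Lambda:=\langle\beta(a,b):a,b\in A\rangle$ is a finitely generated subgroup of $F^{\times}$. The theorem is almost entirely a statement about the structure of $\beta$ forced by $M$, and the only two features of $M$ I would use are: (i) by the parenthetical consequence of imperviousness, $M$ is $FB$-torsion-free whenever $FB$ is commutative — equivalently whenever $B$ is $\beta$-isotropic — so $M$ contains a free $FB$-module and hence $\rank B\le\dim_{FB}M\le\dim M=\tfrac12 n$; and (ii) by the Bernstein-type inequality, any nonzero impervious module over a sub-crossed product $FC$ has Gelfand--Kirillov dimension at least $\tfrac12\rank C$.

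The heart of the proof is to show, by induction on $n$, that the existence of a \hol\ module forces the following structure: some subgroup of finite index in $A$ decomposes as $A_1\oplus\dots\oplus A_t$ with $\beta(A_i,A_j)=1$ for $i\ne j$ (so $FA_i$ commutes with $FA_j$), each $\beta|_{A_i}$ nondegenerate on $A_i$ with image an infinite cyclic $\langle q_i\rangle$ (so $\rank A_i$ is even, $\beta|_{A_i}$ corresponding to a nondegenerate integral symplectic form), and $q_i,q_j$ multiplicatively independent — hence $\langle q_i\rangle,\langle q_j\rangle$ non-commensurable — when $i\ne j$. Equivalently, writing $\beta$ as a tuple of integral alternating forms via a choice of generators of $\Lambda$, these forms must disentangle into blocks each involving a single commensurability class of $\Lambda$, with $\Lambda$ in particular torsion-free; the substance is to rule out the contrary (a ``torsion-tainted'' or ``entangled'' $\beta$). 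To do this I would choose a well-adapted $\beta$-isotropic $B$, invert the commutative Ore set $FB\setminus\{0\}$ (legitimate because any \hol\ module is $FB$-torsion-free), and analyse the resulting crossed product $L*(A/B)$ of a field $L$ with $A/B$, which remains torsion-tainted or entangled; an inductive Gelfand--Kirillov estimate on it — the rank having dropped by $\rank B$ — should show that every nonzero finitely generated module over it, in particular the nonzero localisation of a \hol\ $FA$-module, has dimension exceeding $\tfrac12 n$, a contradiction. I expect this dimension estimate for entangled (localised) quantum tori to be the main obstacle — note that the crude bound (i) does not by itself exclude such $\beta$.

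Finally, given the decomposition $A'=A_1\oplus\dots\oplus A_t$ of finite index in $A$, I would extract the \hol\ $FA_i$-submodules of $M$. Since the factors commute, $FA'=FA_1\otimes_F\cdots\otimes_F FA_t$ and $M$ is finitely generated over $FA'$. Applying (i) to a commutative $FB'$ with $B'=B_1\oplus\dots\oplus B_t$, each $B_i$ a rank-$\tfrac12\rank A_i$ isotropic subgroup of $A_i$, shows $\dim_{FB'}M=\tfrac12 n$ and in particular that $M$ is $FB_j$-torsion-free for each $j$; inverting $\bigcup_{j\ne i}(FB_j\setminus\{0\})$ inside $FA'$ therefore keeps $M$ nonzero and replaces $FA'$ by $FA_i\otimes_F\hat R$ with $\hat R$ a simple ring. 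A dimension count over this tensor product — using (ii) for an impervious $FA_i$-submodule and the a priori bound on dimension over $\hat R$ — forces an $FA_i$-submodule $N$ of $M$ with $\dim_{FA_i}N=\tfrac12\rank A_i$; and $N$ can be taken impervious over $FA_i$, which is the one remaining delicate point (imperviousness over $FA$ is not literally inherited by $FA_i$-submodules), resolved by tracking how an induced-from-infinite-index $FA_i$-submodule would interact with the commuting complementary factors of $M$. Then $N$ is \hol\ as an $FA_i$-module and the proof is complete.
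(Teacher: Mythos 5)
Your proposal diverges significantly from the paper, and the divergence is precisely at the step you flag as the main obstacle — which is a genuine gap, not merely a detail to fill in.

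The paper does not argue by Ore localisation and a direct Gelfand--Kirillov estimate on the localised torus. Instead it builds, in Section 4.2, a self-contained piece of linear algebra: the notion of a \emph{symplectic base} for an alternating $\mathbb{Q}$-bilinear map $\phi\colon V\times V\to W$, and the key criterion (Proposition \ref{ample}) that a space with \emph{ample abelian subspaces} admits such a base. It then verifies ampleness for $V=A\otimes\mathbb{Q}$ not from the two facts you list (torsion-freeness over commutative $FB$ and the Bernstein bound), but from the geometry of the invariant $\Delta^*(M)$. The family $\Omega_X$ of $m$-dimensional abelian subspaces required by ampleness is supplied by \emph{carrier space subgroups}: (2) of Proposition \ref{cjbb} says they are virtually commutative of the right rank; Lemma \ref{trivinter} says any commutative subgroup misses some carrier space subgroup, giving condition (2) of ampleness; and Lemma \ref{nontrivinter} — which rests on the non-genericness result Corollary 3.7 of [BG3] via Corollary \ref{geneqlc} — says carrier spaces pairwise intersect, giving condition (1). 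Without some substitute for this intersection property of carrier spaces, the ``disentanglement'' you want does not follow. Indeed, for $\rank A=4$ with $\beta(a,b)=\beta(c,d)=q$ and the cross-terms trivial, the centre is $F$, every abelian subgroup has rank $\le 2$, and facts (i) and (ii) alone are consistent with a $\dim 2$ torsion-free-over-abelians module; you need additional geometric input (the paper's carrier-space intersection) to exclude a \hol\ module, and you give none.

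A second, smaller gap: your plan for producing \hol\ $FA_i$-submodules of $M$ acknowledges that imperviousness over $FA_i$ is ``the one remaining delicate point'' but offers only a vague promise to ``track'' induced submodules. The paper sidesteps this by applying Theorem 5.5 of [BG2] to identify $\Delta^*(N)$ for a cyclic critical $FA_i$-submodule $N$ of minimal dimension with $\pi_i^*(\Delta^*(M))^*$, then reading off $\dim N = m_i$ from the carrier-space decomposition established via Proposition \ref{absubsp}; the torsion-freeness of $N$ over commutative $FB\le FA_i$ (which is the working definition of strong holonomicity here) is inherited from $M$ directly. This is a concrete mechanism where your sketch has only a placeholder. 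In short, your high-level intuition — that the commutator bicharacter must split into non-commensurable symplectic blocks — matches the target, but the proof content lies in the carrier-space geometry and the ample-abelian-subspaces machinery, neither of which appears in your outline.
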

In fact one can relate $M$ to the tensor product of these $FA_i$-submodules. 
Roughly speaking, strongly holonomic modules 
are built from taking tensor products of strongly holonomic modules 
for crossed products defined using a 2-cocycle with infinite cyclic
image.

The second strand of the paper is to consider the structure of certain 
finitely presented groups. Twenty five years ago it seemed likely that 
an understanding of finitely presented abelian-by-nilpotent groups 
would quickly follow the understanding of finitely presented metabelian 
groups gained in 
Baumslag \cite{Bau73} and in  Bieri and Strebel \cite{BSmeta}.

It soon became clear, however, that even the existence of finitely 
presented abelian-by-nilpotent groups which were not also 
nilpotent-by-abelian-by-finite was a non-trivial question. It was 
settled, however, by Robinson and Strebel in \cite{RobStre}. They 
provided examples where the `nilpotent top'  was either a Heisenberg 
group of Hirsch length 3 or a direct product of such a group with an 
infinite cyclic group. It is not difficult to extend their techniques 
to provide examples with `nilpotent top' which are Heisenberg of any 
rank or, more generally, a central product of such groups and cyclic 
groups. (Here a Heisenberg group is one of the form
$$\langle x_1, 
\dots, x_n, y_1, \dots, y_n, z: [x_i,x_j]=[y_i,y_j]=[x_i, 
z]=[y_i,z]=1, [x_i,y_i]=z\rangle$$
where the indices $i,j$ are 
allowed to run between $1$ and $n$; and by `nilpotent top' we mean 
the quotient by the Fitting subgroup.)

In \cite{BRW}, the first 
author, Roseblade and Wilson showed that a finitely presented 
abelian-by-polycyclic group is virtually nilpotent-by-nilpotent. In 
 \cite{Brcp}, the first author showed that a finitely presented 
abelian-by-nilpotent group is virtually nilpotent-by-nilpotent of 
class 2. Thus the question  of finitely presented 
abelian-by-polycyclic groups essentially reduces to the case that the 
`polycyclic top' is nilpotent of class 2.

We argue that the restriction is, in some sense, greater still. 
In fact, with a natural restriction, we are reduced to 
examples which have a similar structure to the generalisations of the 
Robinson-Strebel examples mentioned above. 
We cannot in this case 
restrict the Fitting quotient in general because the second author and Strebel \cite{GS} have shown that
the set of possible Fitting quotients is closed under subdirect 
product and hence includes all finitely generated nilpotent groups of 
class 2. However we do make progress if we focus on subdirectly 
irreducible finitely presented groups, that is those where any two non-trivial 
normal subgroups have non-trivial intersection.

\begin{thm*} [Corollary 5.5] Let $G$ be a finitely presented group 
which is an extension of an abelian group by a group which is torsion-free 
nilpotent of class two. Suppose that $G$ is subdirectly irreducible. 
Then the quotient by the Fitting subgroup of $G$ has a subgroup of 
finite index which is a central product of groups which are either 
Heisenberg or cyclic.
\end{thm*}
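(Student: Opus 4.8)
The plan is to pass from $G$ to a module over a quantum torus, use subdirect irreducibility to cut down to a single controlling module, use finite presentation to see that this module is \hol{}, apply Theorem~4.2, and read the resulting decomposition back into $G/\Fitt(G)$. For the set-up, let $A\trianglelefteq G$ be abelian with $Q:=G/A$ torsion-free nilpotent of class two, put $T:=[Q,Q]\le Z(Q)$, so that $\mathcal{A}:=Q/T$ is, up to finite index, free abelian of finite rank; since $G$ is finitely presented and $Q$ polycyclic, $A$ is a finitely generated $\mathbb{Z}Q$-module and $\mathbb{Z}T$ a central subring of $\mathbb{Z}Q$ acting on it. The monolith of the subdirectly irreducible group $G$ lies in every non-trivial normal subgroup, hence in $A$ and in every non-zero $\mathbb{Z}Q$-submodule of $A$; so I can pass to a suitable quotient of $A$, choose an associated prime $\mathfrak{p}$ of $\mathbb{Z}T$, and localise to get a non-zero module $M$ over the crossed product $F\mathcal{A}$, where $F$ is (an extension of) the residue field at $\mathfrak{p}$, arranged so that $F$ is exactly the centre of $F\mathcal{A}$ and $T\hookrightarrow F^{\times}$. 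Here $F\mathcal{A}$ is a quantum torus whose defining $2$-cocycle is the commutator pairing $\mathcal{A}\times\mathcal{A}\to T$ followed by $T\to F^{\times}$. The same monolith argument forces $M$ to be impervious: a non-zero submodule of $M$ induced from a sub-crossed product on a subgroup of infinite index would pull back to a normal subgroup of $G$ missing the monolith.

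Next, finite presentation of $G$ forces $\dim M=\tfrac12\rank\mathcal{A}$. The lower bound $\dim M\ge\tfrac12\rank\mathcal{A}$ is the Bernstein-type inequality of \cite{Brcp} for impervious modules over a quantum torus with centre $F$; the reverse inequality is where finite presentation is really used, through the Bieri--Strebel-type control on the module of relations of a finitely presented abelian-by-polycyclic group (cf.\ \cite{BRW}) and the homological finiteness it imposes on $M$. So $M$ is \hol{} and Theorem~4.2 applies: there is a finite-index subgroup $\mathcal{A}_1\oplus\cdots\oplus\mathcal{A}_t$ of $\mathcal{A}$ with $F\mathcal{A}_i$ commuting with $F\mathcal{A}_j$ for $i\ne j$, each defining $2$-cocycle of infinite cyclic image in $F^{\times}$, these images pairwise non-commensurable.

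It remains to translate this back into the group. Pull $\mathcal{A}_1\oplus\cdots\oplus\mathcal{A}_t$ back to a finite-index subgroup $Q'$ of $Q$, with $Q_i$ the preimage of $\mathcal{A}_i$. Commutativity of $F\mathcal{A}_i$ with $F\mathcal{A}_j$ says $[Q_i,Q_j]\subseteq\ker(T\to F^{\times})=1$, so $Q'$ is the central product over $T$ of the $Q_i$. Each $Q_i$ is a central extension of the free abelian group $\mathcal{A}_i$ by $[Q_i,Q_i]$, which is infinite cyclic because the cocycle image is; non-commensurability guarantees that the subgroups $[Q_i,Q_i]$ together span a subgroup of rank $t$ in $T$, so the pieces are genuinely distinct. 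Splitting off the radical of the commutator form on each $\mathcal{A}_i$, at the cost of finite index, exhibits $Q_i$ as the central product of a Heisenberg group and a free abelian group; hence $Q'$ is virtually a central product of Heisenberg and cyclic groups. Finally, since $M$ is impervious over a quantum torus with centre $F$ it is torsion-free over every commutative sub-crossed product, so no non-trivial element of $Q$ acts unipotently on $M$; with the reductions of Section~5 this gives $\Fitt(G)=A$, so $G/\Fitt(G)$ is commensurable with $Q$ and inherits the required structure from its finite-index subgroup $Q'$.

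The main obstacle is the assertion that finite presentation, together with the subdirect-irreducibility reduction that isolates $M$, forces the upper bound $\dim M\le\tfrac12\rank\mathcal{A}$, i.e.\ that $M$ is genuinely \hol{}: this is the point where the finite presentation really bites, and is presumably what the earlier part of Section~5 is built to deliver. Everything else is either machinery already in place (\cite{BRW}, \cite{Brcp}, Theorem~4.2) or routine bookkeeping with central extensions and pencils of alternating forms.
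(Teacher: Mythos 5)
Your high-level plan is the same as the paper's: pass from the group to a module over a quantum torus $FA$ with $F$ central, show the module is strongly holonomic using finite presentation together with Brookes' Bernstein inequality, apply Theorem~4.2 to get the symplectic decomposition, and translate back to the Fitting quotient. The paper does this via Proposition~5.3, Theorem~5.4 and the one-line observation that subdirect irreducibility lets one take a single uniform submodule $M_1 = M$. However, your proposal has two genuine gaps.

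First, the upper bound $\dim \widehat N\le \tfrac12\rank A$ — which you yourself flag as ``the main obstacle'' — is not actually proved. The paper's mechanism (Lemma~5.2) is not a Bieri--Strebel $\Sigma$-invariant argument on the module of relations: it works through the finitely generated exterior square $M\wedge M$ (Lemma~5.1), Segal's Lemma~9 on residual simplicity to produce a good maximal ideal $Q$, a Krull-dimension comparison between $M/MQ$ and $\mathbb{Z}K/Q\mathbb{Z}K$ via Segal's Theorem~G and Smith's dimension result, and finally Brookes' Lemma~3 on exterior powers to bound the Krull dimension from above by $\tfrac12\rank(H/B)$. Gesturing at \cite{BRW} does not supply this; the specific use of the exterior square is the step where finite presentation bites.

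Second, the claim $\Fitt(G)=A$ is wrong in general, and it is exactly the wrong thing to assume. The Fitting subgroup of $G$ is the preimage of the largest normal subgroup $C$ of $H$ acting nilpotently on the $\mathbb{Z}H$-module $M$, and $C$ can be strictly larger than the centre. What the paper proves is that $H/C$ (not $H$) is virtually a central product of Heisenberg and cyclic groups, where $C$ is the centraliser of the essential submodule $N$. The passage from ``$C$ acts nilpotently on $N$'' to ``$C$ acts nilpotently on all of $M$'' is non-trivial and is handled by the weak Artin--Rees property for ideals of $\mathbb{Z}H$ (Lemma~5.4). Your observation about torsion-freeness of $\widehat N$ over commutative sub-crossed products is a statement about the localised, prime-specific module, not about the integral $\mathbb{Z}H$-module $M$ that actually determines the Fitting subgroup. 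A minor related issue: you set $\mathcal A=Q/[Q,Q]$, whereas the paper uses $A=H/Z(H)$; since $Z(H)$ may properly contain $[H,H]$, your $F\mathcal{A}$ need not have centre exactly $F$, which is a hypothesis needed throughout Section~4.
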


It seems likely that the same will be true if one weakens `finitely 
presented' by replacing it with the condition that $G$ is the quotient 
of a small finitely presented group, that is one without free subgroups 
of rank two).

After some definitions and basic results in Section 2, we consider 
some properties of the geometric invariant, introduced in 
\cite{jrjgcjbb1}, for modules over crossed products. 
This invariant is related to one used by Bieri and Strebel in 
the classification of finitely presented metabelian groups \cite{BSmeta}, 
which in turn was related to the logarithmic limit set of Bergman \cite{Berg}. 
Such invariants are also of interest in tropical geometry \cite{Ein}.
In Section 3 we consider local cones and provide 
an alternative proof to Theorem B of Wadsley \cite{Wad} linking 
them with trailing coefficient moduIes. 
In Section 4, we prove the main result about strongly holonomic modules
and complete the paper by applying this result to groups in Section 5.

\section{Definitions and preliminary results}
Throughout this section and the next, $D$ will denote a division 
ring, $A$ a finitely
generated free abelian group and $DA$ a crossed product of $D$ with 
$A$. By the {\it rank} of
any abelian group $B$, we shall always mean the torsion-free rank or, 
equivalently, the
$\mathbb Q$-dimension of the tensor product $B \otimes \mathbb Q$ of 
$B$ with the rational numbers
$\mathbb Q$. We shall denote the rank of $B$ by $\rank B$. The rank 
of $A$ will always be
denoted by $n$. All modules will be right modules. All functions will 
be written on the left.

The structure of $DA$ demands that there is a $D$-vector space basis
$\bar A$ of $DA$, consisting of units and in bijective correspondence 
$a\rightarrow \bar a$
with $A$, and it is convenient to assume throughout that $\bar 1$ is 
the multiplicative
identity of $DA$; thus each element $\alpha$ of $DA$ can be uniquely 
expressed as a sum of the
form $$\alpha=\sum \bar a d_a $$ with $d_a\in D$, $\bar a \in \bar A$ 
and only finitely many
$d_a$ non-zero. We shall refer to the finite set of elements $a\in A$ 
such that $d_a$ is non-zero as
the {\it support} of $\alpha$, written $\Supp(\alpha)$, and for a 
subset $X$ of $A$ write $DX$
for the set of elements of $DA$ with support in $X$. 
The multiplication in $DA$ depends on a 2-cocycle with image in the 
multiplicative group of $D$.

If $B$ is a
submonoid of $A$ then $DB$ is a subring which has a natural structure
as a crossed product of $D$ with $B$. Because $A$ is torsion-free 
abelian of finite
rank, it is orderable and so it is easy to prove that $DA$ has no 
non-zero divisors of
zero. Further details, as well as proofs of some of the statements 
made here, can be found in
the book by Passman \cite {Passcr}.

We denote the homomorphism group $\Hom(A, \mathbb R)$ by $A^*$ and 
use similar notation for
other abelian groups. As $A$ has finite rank, $\chi(A)$ will also 
have finite rank; we call
this the {\it rank} of $\chi$. We often extend the definition of 
$\chi$ to $DA$ by
defining, for non-zero $\alpha\in DA$, $\chi(\alpha)=\min\{\chi(a)\}$ 
where $a$ is allowed to
run through the support of $\alpha$. If $B$ is a subgroup of $A$ then 
there is a natural map
from $A^*$ to $B^*$ obtained by restriction. We denote this map by 
$\pi_B$. If $C$ is a
subgroup of $B$ then the corresponding map from $B^*$ to $C^*$ is 
denoted by $\pi^B_C$.

  We shall denote by $M$ a finitely generated $DA$-module. Then $M$ 
defines a subset
$\Delta(M)$ of $A^*$. We refer to \cite{jrjgcjbb2} for a full 
definition but the most useful
characterisation for the current purposes is the following.
\begin{equation}\parbox{\mbl}{$\chi\notin \Delta(M)$ if and only if, 
for each $m\in M$, there
is a relation $m.(1+\alpha)=0$ with $\alpha\in DA$ and $\chi(a)>0$ 
for each $a$ in the support
of $\alpha$. }\end{equation}
We refer to \cite[Section 3]{jrjgcjbb2} for a fuller discussion of 
the elementary properties of
$\Delta(M)$.

The {\it dimension} of $M$ is the largest natural number $m$ so that 
$M$ contains a non-zero torsion-free $DB$-submodule for
some
$B\le A$ with $B$ of rank $m$. The properties of this dimension are 
discussed in \cite{jrjgcjbb2}; in particular, it is shown that it
coincides with the standard Gelfand-Kirillov dimension.

It turns out that it is much easier to describe a large subset of 
$\Delta(M)$. A point $x$ of $\Delta(M)$ is {\it regular}
if some neighbourhood of $x$ in $\Delta(M)$ is an $m$-ball for some 
positive integer $m$ and if $m$ is the largest integer
for which this can occur. Then $\Delta^*(M)$ is the Euclidean closure 
of the set of regular points of $\Delta(M)$. The main
result of \cite{jrjgcjbb2} was that, if $M$ has dimension $m$, then 
$\Delta^*(M)$ is a rational polyhedron and that the points lying in
$\Delta(M)$ but not in $\Delta^*(M)$ can be enclosed within a 
rational polyhedron of dimension $m-1$. (Here a {\it rational 
polyhedron}  
is a finite union of finite intersections of 
half-spaces with boundaries defined by a linear equation with 
rational coefficients. It is of 
{\it dimension $m$} if it contains 
$m$-balls but no $k$-balls for $k>m$.) 
More recently Wadsley \cite{Wad}
has shown that $\Delta(M)$ is itself polyhedral

 The local cone was introduced in \cite{jrjg1984}, where $DA$ is commutative,  in an attempt to describe local behaviour of
$\Delta$. Let $S\subseteq A^*$ and let $x\in S$. The {\it local cone} 
of $S$ at $x$ is
$$LC_{x}(S)=\{y: \text{ for some } \epsilon_0>0, x+\epsilon y\in S 
\text{ for all } \epsilon \in [0,\epsilon_0]\}.$$
Observe that the local cone is a cone, centered at the origin.  In 
all cases here,
$S$ will be either $\Delta(M)$ or $\Delta^*(M)$. The dimension of 
$LC_{\chi}(\Delta(M))$, for a regular point $\chi$, equals
the dimension of $\Delta(M)$ and so that of $M$.

In this and the next section, we shall be interested in the relation 
between the concept of local cone and the following
concept, which we can also regard as being `local'.

\begin{defin}  Let $M$ be a finitely generated $DA$-module furnished with a
finite generating set
$\mathcal X$. Fix $\chi\in A^*$ and set $A(0)=\{a\in A:\chi(a) \ge 0\}$ and
$A(+)=\{a\in A:\chi(a) >
0\}$. Then $A(0)$ and $A(+)$ are subsemigroups of $A$ and we can form the
sub-crossed products $DA(0)$
and
$DA(+)$. Define the {\rm trailing coefficient} module $TC_{\chi}(M)$ to be
$$TC_{\chi}(M)=\mathcal X.DA(0)/\mathcal X.DA(+);$$
it is naturally a module for $DB$ where $B$ is the kernel of $\chi$.
\end{defin}

Observe that, using the characterisation of $\Delta(M)$ above, it 
follows immediately that if $\chi \notin \Delta(M)$
then $TC_{\chi}(M)$ is zero. The converse follows from Proposition 
3.1 of \cite{jrjgcjbb2}. Because $TC_{\chi}(M)$ is a $DB$-module it
again has a $\Delta$-set, which is a subset of $B^*$.

Theorem B of \cite{Wad} establishes the relationship between the local cone 
at $\chi$ and $\Delta(TC_{\chi}(M))$. The main aim of the rest of this 
section and the next is to provide an alternative approach to that result.

First we wish to establish a useful technical condition for inclusion in 
$\Delta(TC_{\chi}(M))$. We have extracted  part of the proof of this 
as a
technical lemma. It enables us to apply results which are standard 
for Noetherian rings to non-Noetherian subrings of $DA$.
\begin{lemma}\label{ringlem} Let $U$ and $V_1$ be subsemigroups of 
$A$ and $V$ a submonoid of $A$ with $UV\subseteq U,
VV_1\subseteq V_1$ and $V_1\subseteq V$. Then
\begin{enumerate}
\item $R=DU+DV$ is a subring of $DA$ and $J=DU+DV_1$ is an ideal of $R$;
\item $1-J$ is a right denominator set in $R$ (in the sense of 
\cite[2.1.13]{MR});
\item if each $x\in \mathcal X$ is $(1-J)$-torsion then so also is 
each $m\in M$.
\end{enumerate}
\end{lemma}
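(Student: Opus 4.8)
The plan is to prove the three parts in order, with the first two being essentially formal bookkeeping about supports and the third being a genuine descent argument from the generating set to the whole module. For part (1), I would argue directly from the description of multiplication in $DA$: if $a\in U$ and $b\in V$ then, since $\bar a\bar b$ is a unit times $\overline{ab}$ and $ab\in UV\subseteq U$, the product $DU\cdot DV$ lies in $DU$; similarly $DV\cdot DV\subseteq DV$ because $V$ is a submonoid, and $DU\cdot DU\subseteq DU$ because $U$ is a subsemigroup and $UV\subseteq U$ forces $UU\subseteq U$ (applying $UV\subseteq U$ only needs $V\ni 1$, but $UU\subseteq U$ is immediate from $U$ being a subsemigroup). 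Hence $R=DU+DV$ is closed under multiplication, and it contains $1=\bar 1$ since $1\in V$; so $R$ is a subring. For $J=DU+DV_1$ one checks $RJ\subseteq J$ and $JR\subseteq J$ using the containments $UV\subseteq U$, $V_1V\subseteq V_1$ (which follows from $VV_1\subseteq V_1$ by commutativity of $A$), $UU\subseteq U$, $V_1\subseteq V$, and $UV_1\subseteq U$; each verification is a one-line support computation, so $J$ is a two-sided ideal of $R$.

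For part (2), I would verify the right Ore condition and right reversibility for the set $1-J=\{1-j:j\in J\}$. First, $1-J$ is multiplicatively closed: $(1-j_1)(1-j_2)=1-(j_1+j_2-j_1j_2)$ and $j_1+j_2-j_1j_2\in J$ since $J$ is an ideal. The key point is that $DA$ has no zero divisors (stated in the excerpt, using orderability of $A$), so $R$ is a domain; combined with the fact that $R$ is filtered by $\chi$-degree with the elements of $1-J$ having $\chi$-value exactly $0$ and "leading part" $1$, one sees that left multiplication by $1-j$ is injective on $R$. For the Ore condition — given $r\in R$ and $1-j\in 1-J$, find $r'\in R$ and $1-j'\in 1-J$ with $r(1-j')=(1-j)r'$ — I would use that $J$ is an ideal so $rj\in J$ whenever $r$'s support meets $U\cup V_1$ appropriately; more precisely, writing $r=u+v$ with $u\in DU$, $v\in DV$, one has $rj\in J$ automatically, so the equation $r-rj' = r' - jr'$ can be solved by a standard successive-approximation / completion argument in the $\chi$-adic-type filtration, the point being that $J$ consists of elements with no effect on the degree-zero "top" and the iteration converges because supports are finite and bounded once we truncate. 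This convergence is where one must be slightly careful, but it is exactly the standard argument that $1+(\text{lower-order ideal})$ is a denominator set; I expect to cite \cite[2.1.13]{MR} for the precise formulation of what must be checked.

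For part (3), the substance of the lemma, I would use that $M$ is generated by the finite set $\mathcal X$ as a $DA$-module, hence $M=\sum_{x\in\mathcal X} xDA$. Given that each $x\in\mathcal X$ is $(1-J)$-torsion in $R$ — meaning $x(1-j_x)=0$ for some $j_x\in J$ — I want to conclude every $m\in M$ is killed by some element of $1-J$. Write $m=\sum_x x\alpha_x$ with $\alpha_x\in DA$. By part (2), $1-J$ is a right denominator set, so finitely many right fractions can be brought to a common denominator: there is a single $1-j\in 1-J$ with $(1-j)$ "absorbing" all the $j_x$ simultaneously, i.e. for each $x$ we can write $(1-j_x)\beta_x=(1-j)$-related data allowing us to replace the individual torsion relations by one. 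Concretely, since $x(1-j_x)=0$ and $\alpha_x\in DA$, I would push $\alpha_x$ past $(1-j_x)$ using the Ore condition inside $R$ — but here there is a subtlety, because $\alpha_x\in DA$ need not lie in $R$. The resolution: one only needs the relation $m(1-j)=0$, and for this it suffices to find $j\in J$ with $x\alpha_x(1-j)=0$ for all $x$; using $x(1-j_x)=0$ one rewrites $x = x j_x$, so $x\alpha_x = xj_x\alpha_x$, and iterating, $x\alpha_x = x j_x^{(k)}$ for elements $j_x^{(k)}\in DA$ whose support is pushed to higher and higher $\chi$-value (because $j_x\in J$ has the property, via $UV\subseteq U$ etc., that multiplying shifts support into $U\cup V_1$). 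Taking $k$ large enough relative to the finite support of $m$ and the common denominator from part (2), one obtains the required single relation. The main obstacle, and the step I would spend the most care on, is precisely this interplay between elements of $M$ (which involve arbitrary $\alpha_x\in DA$, not in $R$) and the torsion relations living in $R$: one must show the "shifting" caused by the ideal $J$ genuinely moves support far enough to be absorbed, which is where the hypotheses $UV\subseteq U$, $VV_1\subseteq V_1$, $V_1\subseteq V$ are all used and where a clean induction on $|\Supp(m)|$ or on $\chi$-degree is needed.
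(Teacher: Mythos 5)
Part~(1) of your proposal is fine and matches the paper's (unwritten) routine check; your observation that $V_1V\subseteq V_1$ and $UV_1\subseteq U$ follow from the stated hypotheses and commutativity is exactly what is needed, and $J$ is indeed a two-sided ideal.

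Part~(2) contains a genuine gap. You appeal to a ``$\chi$-adic-type filtration'' and a successive-approximation argument to solve $r(1-j')=(1-j)r'$. But there is no $\chi$ anywhere in the statement of the lemma: $U$, $V$, $V_1$ are arbitrary subsemigroups/submonoid subject only to the three containments, so there is no intrinsic filtration with $J$ sitting in strictly positive degree. Even in the intended application (Lemma~2.2, with $U=A(+)$, $V=B(0)$, $V_1=B(+)$) the ideal $J$ contains elements whose $\chi$-value is exactly $0$ (those coming from $DB(+)$), so the geometric series $\sum_k j^k r$ does not terminate and there is no completeness available to make it converge. In other words, the ``standard argument that $1+(\text{lower-order ideal})$ is a denominator set'' does not apply without some additional finiteness. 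The paper's actual route is different and is the key idea of the lemma: given $r\in R$ and $t\in 1-J$, pass to finitely generated subsemigroups/submonoid $\tilde U,\tilde V,\tilde V_1$ (obtained from the finite supports of $r$ and $t$) that still satisfy the three containments, so that the corresponding $\tilde R$ is Noetherian by a noncommutative Hilbert basis theorem. Then $\tilde J$ is generated by elements of $\bar A$, which are normal in $\tilde R$, so one may invoke Propositions~2.6 and~4.2.9 of McConnell--Robson (essentially, a poly-normal ideal in a Noetherian ring has the AR property, and $1-\tilde J$ is then a right \"Ore set). The \"Ore pair $(r',t')$ found inside $\tilde R$ also works inside $R$. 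Your proposal has no substitute for this Noetherian reduction, and that is where the proof would break down.

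Part~(3) also misses the paper's mechanism. Your iteration $x\alpha_x = xj_x\alpha_x = xj_x^2\alpha_x=\cdots$ never changes the element, so it does not ``push support to higher $\chi$-value'' (again, there is no $\chi$ here), and it is not clear how to produce a single $j$ from it. The paper instead writes $m=\sum_{x}x\,d\bar a$ as a finite $D$-linear combination of translates of the generators and uses that each $d\bar a$ is a \emph{unit} normalising $J$ (conjugation by $d\bar a$ fixes the $A$-grading up to scalars because $A$ is abelian). Hence $(t_x)^{d\bar a}\in 1-J$ and $(x\,d\bar a)\cdot (t_x)^{d\bar a}= x\,t_x\,d\bar a=0$, so each summand $x\,d\bar a$ is $(1-J)$-torsion. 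One then finishes by the standard fact, which \emph{does} use the \"Ore condition from part~(2), that a finite sum of $T$-torsion elements is $T$-torsion. The conjugation step is what lets the argument sidestep the fact that $\alpha_x\in DA$ need not lie in $R$, and it is absent from your proposal.
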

\begin{proof} The first statement of the lemma is a routine check.

Set $T=1-J$. We show that $T$ is a right \ore set in $R$. Recall that 
this means that we must
show that, if $r\in R$ and $t\in T$ then there exist elements $r'\in 
R$ and $t'\in T$ so
that $rt'=tr'$.

The union of the supports of $r$ and $t$ is finite and so we can find 
finitely generated subsemigroups $\widehat U$ of
$U$, $\widehat V_1$ of $V_1$ and a finitely generated  submonoid 
$\widehat V$ of $V$ so that $r\in D\widehat
U+D\widehat V$ and
$t\in D\widehat U+D\widehat V_1$. Set $\tilde U=\widehat U\widehat V, 
\tilde V=\widehat V$ and $\tilde
V_1=\widehat V
\widehat V_1$. Then $\tilde U, \tilde V_1$ are subsemigroups and 
$\tilde V$ is a submonoid; further,
$\tilde U\tilde V\subseteq \tilde U$, $\tilde V\tilde V_1 \subseteq 
\tilde V_1$ and $\tilde V_1
\subseteq\tilde V$. Also
$$r\in \tilde R=D\tilde U+D\tilde V \text{ and } t\in \tilde J= 
D\tilde U+D\tilde V_1.$$
As in part (1) of the lemma, $\tilde R$ is a ring with ideal $\tilde J$.

We can use a non-commutative version of the Hilbert basis theorem 
(see, for example, Theorem 10.2.6 of \cite{Passcr} to
show that $\tilde R$ is Noetherian. Also,
$\tilde J$ is generated as ideal of $\tilde R$ by elements of $\bar 
A$ and if $a\in A$ then $\bar a \tilde
R=\tilde R \bar a$. Thus we can apply Proposition 2.6 of \cite{MR} 
and then Proposition 4.2.9 of \cite{MR} to show that
$1-\tilde J$ is a right \ore set. Thus we can find $r'\in \tilde R$ 
and $t'\in 1-\tilde J$ so that $rt'=tr'$.
As $r'\in R$ and $t'\in 1-J$, this shows, therefore, that $T$ is a 
right \ore set. Because $DA$ has no divisors of
zero, neither does $R$ and so $T$ is a right denominator set.

To prove the last part of the lemma, let $m\in M$ and suppose that 
$m=\sum xd\overline a$ with $x\in \mathcal X$,
$d\in D$ and $a\in A$. Suppose that, for $x\in \mathcal X$, we have 
$xt_x=0$ with $t_x\in T$. Then $(xd\overline a).
(t_x)^{d\overline a} =0$ and $(t_x)^{d\overline a}$ is still an 
element of $J$. It is a standard check, using the right \ore 
condition, that
a sum of
$T$-torsion elements is still $T$-torsion and so $m$ is also $T$-torsion.
\end{proof}

\begin{lemma}\label{techlem} Let $\chi\in A^*$ and let $B$ denote the 
kernel of $\chi$. Let $\psi\in B^*$. Then $\psi\notin 
\Delta(TC_{\chi}(M))$
if and only if
\begin{equation}\label{notintc}\parbox{\mbl}{
for each $m\in M$,
there exist $\alpha \in DA$ and $\beta\in DB$ with $m=m\alpha+m\beta$ 
and $\chi(\alpha)>0$ and $\psi(\beta)>0$.}
\end{equation}
\end{lemma}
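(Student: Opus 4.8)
The plan is to deduce both implications from the characterisation of $\Delta$ recalled at the start of this section, applied to $TC_\chi(M)$ viewed as a $DB$-module. First I would fix notation: write $B^+=\{b\in B:\psi(b)>0\}$ and $B(0)=\{b\in B:\psi(b)\ge 0\}$, a subsemigroup and a submonoid of $B$. Since $A(+)B(0)\subseteq A(+)$ and $B(0)B^+\subseteq B^+$, Lemma~\ref{ringlem} applies with $U=A(+)$, $V=B(0)$, $V_1=B^+$, yielding a ring $R=DA(+)+DB(0)$ with ideal $J=DA(+)+DB^+$ for which $1-J$ is a right denominator set. The key reformulation is that an element of $1-J$ has the shape $1-\alpha-\beta$ with $\alpha\in DA(+)$ and $\beta\in DB^+$, so that condition~\eqref{notintc} holds for a given $m\in M$ exactly when $m$ is $(1-J)$-torsion. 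I would also record that $A(0)=B\cup A(+)$, so $DA(0)=DB+DA(+)$ and $DA(0)\cdot DA(+)\subseteq DA(+)$; hence $TC_\chi(M)=\mathcal X DA(0)/\mathcal X DA(+)$ is generated as a $DB$-module by the images of the elements of $\mathcal X$.

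The implication ``\eqref{notintc} $\Rightarrow$ $\psi\notin\Delta(TC_\chi(M))$'' is the easy one. Given an element of $TC_\chi(M)$, I would lift it to some $m\in\mathcal X DA(0)$ and apply \eqref{notintc} to $m\in M$, obtaining $m=m\alpha+m\beta$ with $\chi(\alpha)>0$ and $\psi(\beta)>0$. Since $m\alpha\in\mathcal X DA(0)\cdot DA(+)\subseteq\mathcal X DA(+)$, the image $\bar m$ in $TC_\chi(M)$ satisfies $\bar m=\bar m\beta$, i.e. $\bar m\,(1+(-\beta))=0$ with $-\beta\in DB$ and $\psi(-\beta)>0$; the characterisation of $\Delta$ applied to $TC_\chi(M)$ then gives $\psi\notin\Delta(TC_\chi(M))$.

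For the converse, I would assume $\psi\notin\Delta(TC_\chi(M))$ and, by Lemma~\ref{ringlem}(3), reduce to showing that each $x\in\mathcal X$ is $(1-J)$-torsion. The characterisation of $\Delta$ provides, for each $x$, an element $\beta_x\in DB^+$ with $x(1+\beta_x)\in\mathcal X DA(+)\subseteq\mathcal X J$. Because only finitely many elements of $A$ occur in the supports of the $\beta_x$ and in the expressions of the $x(1+\beta_x)$ through $\mathcal X$, I would then imitate the proof of Lemma~\ref{ringlem} to pass to a finitely generated Noetherian subring $\tilde R=D\tilde U+D\tilde V$ of $R$, with ideal $\tilde J=D\tilde U+D\tilde V_1\subseteq J$ and right denominator set $1-\tilde J$, arranged so that each $\beta_x\in\tilde J$ and each $x(1+\beta_x)\in\mathcal X\tilde J$. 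Setting $N=\mathcal X\tilde R\subseteq M$, the relations give $\bar x=0$ in $N/N\tilde J$ for every $x\in\mathcal X$ (using $1+\beta_x\equiv 1\pmod{\tilde J}$), so $N=N\tilde J$. Finally I would localise at $T=1-\tilde J$: since $T$ is stable under adding elements of $\tilde J$, every element of $1+\tilde J\tilde R_T$ is a unit of $\tilde R_T$, so $\tilde J\tilde R_T$ lies in the Jacobson radical of $\tilde R_T$, and Nakayama's lemma applied to the finitely generated module $N\otimes_{\tilde R}\tilde R_T$, which coincides with its own product with $\tilde J\tilde R_T$, forces $N\otimes_{\tilde R}\tilde R_T=0$. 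Hence every $x\in\mathcal X$ is $T$-torsion, so $(1-J)$-torsion, and Lemma~\ref{ringlem}(3) then yields \eqref{notintc}.

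The one genuinely delicate point is this last localisation-plus-Nakayama step: the conclusion we need is that each generator is killed by a unit of a suitable localisation, but $R$ is neither commutative nor Noetherian, so the determinant trick is not directly available. Lemma~\ref{ringlem} is precisely the device that permits the descent to a Noetherian subring on which the standard argument can be run; everything else is routine bookkeeping with supports and the partition $A(0)=B\cup A(+)$.
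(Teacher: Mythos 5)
Your proof is correct and follows essentially the same route as the paper: set up $R=DA(+)+DB(0)$ with ideal $J=DA(+)+DB(+)$ via Lemma~\ref{ringlem}, translate $\psi\notin\Delta(TC_\chi(M))$ into a statement about the $R$-module $N=\mathcal{X}R$ satisfying $N=NJ$, reduce via Lemma~\ref{ringlem}(3) to $(1-J)$-torsion of the generators, and finish with localisation and Nakayama. The one place you diverge is the ``delicate point'' you flag at the end: in the converse direction you descend again to a finitely generated Noetherian subring $\tilde R\subseteq R$ before localising. That descent is redundant. The paper forms $R_T$ and $N_T$ directly: Nakayama's lemma for a finitely generated module over an arbitrary ring requires no chain condition, and $J_T\subseteq\operatorname{rad}(R_T)$ follows formally from $T=1-J$ being a denominator set with $J$ an ideal --- if $t=1-j'\in T$ and $j\in J$ then $t+j=1-(j'-j)\in T$, so $1+jt^{-1}=(t+j)t^{-1}$ is a unit of $R_T$. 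The Noetherian reduction was already consumed in the proof of Lemma~\ref{ringlem} to establish the \ore\ property of $1-J$ in $R$; once that is in hand, one localises $R$ itself. Your direct argument for the easy implication (lifting an element of $TC_\chi(M)$ to $\mathcal{X}DA(0)$ and using $DA(0)\cdot DA(+)\subseteq DA(+)$) is fine and a touch more explicit than the paper's, which runs both directions through the chain of equivalences $\psi\notin\Delta(TC_\chi(M))\Leftrightarrow N=NJ\Leftrightarrow N$ is $(1-J)$-torsion $\Leftrightarrow$ (\ref{notintc}).
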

\begin{proof}
Let $R$ denote $DA(+) + DB(0)$ and let $J$ denote $DA(+) + DB(+)$ of 
$R$. Applying  Lemma \ref{ringlem} with
$U=A(+), V=B(0)$ and $V_1=B(+)$, we see that $R$ is a subring of $DA$ 
and $J$ is an ideal of $R$. Further $1-J$ is a
right denominator set  in $R$.

Applying the definition of the Delta sets, $\psi\notin 
\Delta(TC_{\chi}(M))$ if and only if for each $u\in TC_{\chi}(M)$
there exists $\beta\in DB(+)$ with $u.(1+\beta)=0$. Applying the 
definition of the trailing coefficient module, this implies
\begin{equation}\parbox{\mbl}{\label{tempcond}
for each $x\in \mathcal X$,
there exist $\beta \in DB(+)$, $x_i\in \mathcal X$ and $\rho_i\in 
DA(+)$ so that $x.(1+\beta)=\sum_i x_i \rho_i$.}
\end{equation}
Reversing this last argument, we see that   $\psi\notin 
\Delta(TC_{\chi}(M))$ if and only if  (\ref{tempcond}) holds.

Let $N$ denote the $R$-submodule of $M$ generated by $\mathcal X$. If 
(\ref{tempcond}) holds then for each $x\in
\mathcal X$, we have
$x\in NJ$ from which it follows easily that
$N=NJ$ and reversing the argument shows that $N=NJ$ is equivalent to 
(\ref{tempcond}). Thus $\psi\notin
\Delta(TC_{\chi}(M))$ if and only if $N=NJ$.

Observe that (\ref{notintc}) is equivalent to the condition that $M$ 
is $(1-J)$-torsion and from (3) of Lemma
\ref{ringlem} this is equivalent to the condition that $N$ is 
$(1-J)$-torsion. Thus the lemma is reduced to showing that
$N$ is $(1-J)$-torsion if and only if $N=NJ$.

If $N$ is $(1-J)$-torsion, then clearly $N=NJ$. For the converse 
observe that, since $T=1-J$ is a right denominator set in $R$ we can
form the ring of quotients $R_T$ and the module of quotients $N_T$. 
Then $N_T$ is a finitely generated $R_T$ module satisfying
$N_T=N_TJ_T$. But $J_T$ lies in the Jacobson radical of $R_T$ and so, 
by Nakayama's lemma (see, for example 0.3.10 of \cite{MR}),
$N_T=0$. That is $N$ is $T$-torsion, as required.
\end{proof}
Observe that the proof of the lemma shows that it is sufficient, in 
(\ref{notintc}), to assume that the condition holds for all $m$
belonging to some generating set of $M$.

\begin{lemma}\label{ext}Suppose that $L\rightarrow M \rightarrow N$ 
is a short exact sequence of $DA$-modules. Then
$$\Delta(TC_{\chi}(M))=\Delta(TC_{\chi}(L))\cup \Delta(TC_{\chi}(N)).$$
\end{lemma}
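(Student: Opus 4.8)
The plan is to work throughout with the characterisation of membership in the trailing-coefficient $\Delta$-sets provided by Lemma \ref{techlem}, writing $B = \ker\chi$ and proving the equality by passing to complements in $B^*$. First I would observe that, since $DA$ is Noetherian, both $L$ (a submodule of $M$) and $N$ (a quotient of $M$) are finitely generated, so that Lemma \ref{techlem} and condition (\ref{notintc}) apply to all three modules, and all three $\Delta$-sets lie in the same $B^*$. We may regard the sequence as $0 \to L \to M \to N \to 0$ with $L$ a $DA$-submodule of $M$ and $N = M/L$. It then suffices to show that $\psi \notin \Delta(TC_\chi(M))$ if and only if both $\psi \notin \Delta(TC_\chi(L))$ and $\psi \notin \Delta(TC_\chi(N))$.

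For the inclusion $\Delta(TC_\chi(L)) \cup \Delta(TC_\chi(N)) \subseteq \Delta(TC_\chi(M))$ I would assume $\psi \notin \Delta(TC_\chi(M))$ and deduce the condition for $L$ and for $N$ separately. For $L$: condition (\ref{notintc}) for $M$, restricted to those $m$ lying in the submodule $L$, is verbatim condition (\ref{notintc}) for $L$, since the relation $\ell = \ell\alpha + \ell\beta$ already takes place inside the $DA$-submodule $L$; hence $\psi \notin \Delta(TC_\chi(L))$. For $N$: given $\bar m \in N$, lift it to some $m \in M$, apply (\ref{notintc}) for $M$ to obtain $\alpha \in DA$, $\beta \in DB$ with $m = m\alpha + m\beta$, $\chi(\alpha) > 0$ and $\psi(\beta) > 0$, and project this relation to $N$ to get $\bar m = \bar m\alpha + \bar m\beta$; hence $\psi \notin \Delta(TC_\chi(N))$.

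For the reverse inclusion $\Delta(TC_\chi(M)) \subseteq \Delta(TC_\chi(L)) \cup \Delta(TC_\chi(N))$ I would assume $\psi \notin \Delta(TC_\chi(L))$ and $\psi \notin \Delta(TC_\chi(N))$ and verify (\ref{notintc}) for $M$. Take $m \in M$. Its image in $N$ satisfies the condition, so there are $\alpha_1 \in DA$, $\beta_1 \in DB$ with $\chi(\alpha_1) > 0$, $\psi(\beta_1) > 0$ and $\ell := m(1 - \alpha_1 - \beta_1) \in L$. Applying the condition for $L$ to $\ell$ yields $\alpha_2 \in DA$, $\beta_2 \in DB$ with $\chi(\alpha_2) > 0$, $\psi(\beta_2) > 0$ and $\ell(1 - \alpha_2 - \beta_2) = 0$, so that $m(1 - \alpha_1 - \beta_1)(1 - \alpha_2 - \beta_2) = 0$. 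Expanding the product (keeping the order of factors, since $DA$ need not be commutative) and collecting terms, one obtains $m = m\alpha + m\beta$ with $\beta = \beta_1 + \beta_2 - \beta_1\beta_2$ and $\alpha = \alpha_1 + \alpha_2 - \alpha_1\alpha_2 - \alpha_1\beta_2 - \beta_1\alpha_2$ the sum of the remaining terms.

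I expect the bookkeeping in this last step to be the only genuine obstacle, and it rests on two facts. First, $DB$ is a subring of $DA$ (as $B$ is a subgroup), so $\beta_1\beta_2 \in DB$ and hence $\beta \in DB$, with $\psi(\beta) > 0$ because $\psi$ of a product is at least the sum of the $\psi$-values and $\psi$ of a sum at least the minimum. Second, and crucially, $\chi$ vanishes on $B$; therefore each mixed term retains a strictly positive $\chi$-value, for instance $\chi(\alpha_1\beta_2) \ge \chi(\alpha_1) + \chi(\beta_2) = \chi(\alpha_1) > 0$ and likewise $\chi(\beta_1\alpha_2) > 0$, while the pure terms $\alpha_1$, $\alpha_2$, $\alpha_1\alpha_2$ are positive too, so $\chi(\alpha) > 0$. (Degenerate cases where $\alpha$ or $\beta$ happens to vanish are harmless.) This establishes (\ref{notintc}) for every $m \in M$, whence $\psi \notin \Delta(TC_\chi(M))$, and the proof is complete.
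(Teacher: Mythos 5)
Your proof is correct and takes exactly the approach the paper intends: the paper states only that the lemma ``is an immediate application of Lemma \ref{techlem},'' and your argument supplies the details of that application, using the characterisation (\ref{notintc}) and the standard composition of relations $m(1-\alpha_1-\beta_1)(1-\alpha_2-\beta_2)=0$ together with the super-additivity of $\chi$ and $\psi$ and the fact that $\chi$ vanishes on $B$.
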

\begin{proof} This is an immediate application of Lemma 
\ref{techlem}. \end{proof}

Much of the content of the next two sections will be to relate the 
local cone to the Delta set of the trailing coefficient
module. We begin with a relatively simple observation.

\begin{lemma}\label{firstLCtoTC}  Let $\chi \in A^*$ and let $B$ 
denote the kernel of $\chi$. Then
$$LC_{\Delta(M)}(\chi) \subseteq \pi_B^{-1}(\Delta(TC_{\chi}(M))).$$
\end{lemma}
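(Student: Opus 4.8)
The plan is to prove the contrapositive. Fix a finite generating set $\mathcal X$ of $M$, take $y\in A^*$ with $\psi:=\pi_B(y)\notin\Delta(TC_{\chi}(M))$, and show that $\chi+\epsilon y\notin\Delta(M)$ for all sufficiently small $\epsilon>0$; this forces $y\notin LC_{\Delta(M)}(\chi)$. First I would feed $\psi\notin\Delta(TC_{\chi}(M))$ into Lemma~\ref{techlem} and the remark following it: it suffices to know that for each $x\in\mathcal X$ there are $\alpha_x\in DA$ and $\beta_x\in DB$ with $x=x\alpha_x+x\beta_x$, $\chi(\alpha_x)>0$ and $\psi(\beta_x)>0$. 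Rewriting this as $x(1+\gamma_x)=0$ with $\gamma_x=-\alpha_x-\beta_x$, I record that $\Supp(\gamma_x)\subseteq\Supp(\alpha_x)\cup\Supp(\beta_x)$ and that $0$ lies in neither set, since $\chi(\alpha_x)>0$ means every element of $\Supp(\alpha_x)$ has positive $\chi$-value and, $\beta_x$ lying over $B=\ker\chi$, $\psi(\beta_x)>0$ means every element of $\Supp(\beta_x)$ has positive $\psi$-value.

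The heart of the argument is to control $(\chi+\epsilon y)$ on $\Supp(\gamma_x)$. On $\Supp(\beta_x)\subseteq B$ one has $\chi\equiv 0$ and $y$ restricts to $\psi$, so $(\chi+\epsilon y)(a)=\epsilon\,\psi(a)>0$ for \emph{every} $\epsilon>0$. On $\Supp(\alpha_x)$ one has $\chi(a)>0$, so $(\chi+\epsilon y)(a)=\chi(a)+\epsilon\,y(a)$ remains positive once $\epsilon$ is below a positive threshold (only the finitely many $a$ with $y(a)<0$ impose a constraint, namely $\epsilon<\chi(a)/|y(a)|$). Since $\Supp(\alpha_x)$ is finite, this yields a threshold $\epsilon_x>0$ such that $(\chi+\epsilon y)$ is strictly positive on all of $\Supp(\gamma_x)$ for $0<\epsilon<\epsilon_x$. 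As $\mathcal X$ is finite, $\epsilon_0:=\min_{x\in\mathcal X}\epsilon_x>0$, and for every $\epsilon\in(0,\epsilon_0)$ and every $x\in\mathcal X$ the relation $x(1+\gamma_x)=0$ is exactly of the form required by the characterisation of $\Delta(M)$ quoted in \S2, applied to the homomorphism $\chi+\epsilon y$.

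Finally I would invoke the standard reduction of that characterisation to a generating set: given arbitrary $m=\sum x_i d_i\bar a_i\in M$ with $x_i\in\mathcal X$, conjugating each relation $x_i(1+\gamma_{x_i})=0$ by $d_i\bar a_i$ (which, since $A$ is abelian, leaves the non-identity part of the support unchanged) and combining via the Ore/denominator-set argument of Lemma~\ref{ringlem}(3) — now applied to the subsemigroup $\{a\in A:(\chi+\epsilon y)(a)>0\}$ — produces a single relation of the same shape for $m$. Hence $\chi+\epsilon y\notin\Delta(M)$ for all $\epsilon\in(0,\epsilon_0)$, so $y\notin LC_{\Delta(M)}(\chi)$, proving the inclusion. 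I expect the only genuine subtlety to be this uniformity in $\epsilon$: the witnesses $\gamma_x$ depend on $x$, so finite generation of $M$ is essential in order to take a common threshold $\epsilon_0$, and one must pair this with the (routine but worth stating) passage from generators to all of $M$.
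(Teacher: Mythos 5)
Your proposal is correct and follows essentially the paper's own argument: both prove the contrapositive, split the annihilating relation for each generator of $M$ into a $DA(+)$-part and a $DB(+)$-part, and exploit the fact that $\chi$ vanishes on $B$, so the $DB(+)$-part imposes no constraint on $\epsilon$ and only the finitely many thresholds arising from the $DA(+)$-part matter. The only cosmetic difference is that you route through Lemma~\ref{techlem} to obtain a single relation $x(1+\gamma_x)=0$ per generator, whereas the paper unfolds the definition of $TC_\chi(M)$ directly to get $x(1+\gamma_x)=\sum_y y\alpha_{x,y}$ and reassembles; your explicit appeal to Lemma~\ref{ringlem}(3) for the passage from generators to all of $M$ makes visible a step the paper leaves tacit.
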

\begin{proof}
Suppose that $\psi\in A^*$ with 
$\psi|_B=\pi_B(\psi)\notin\Delta(TC_{\chi}(M)$. We must show that
$\psi \notin LC_{\Delta(M)}(\chi)$.

Since $\psi|_B\notin \Delta(TC_{\chi}(M)$ then for each $x\in 
\mathcal X$ there exists $\beta_x \in DB$ with
$\left(x+\mathcal X.DA(+)\right).\beta_x=\{0\}$ and 
$\beta_x=1+\gamma_x$ with $\psi(\gamma_x)>0$. This implies that
$$x.(1+\gamma_x) = \sum_{y\in \mathcal X} y\alpha_{x,y}$$
with $\alpha_{x,y}\in DA$ and $\chi(\alpha_{x,y})>0$.

Choose $\epsilon_0$ by
$$0<\epsilon_0<\min \frac{\chi(a)}{|\psi(a)|}$$
where $a$ is allowed to range through the support of all the elements 
$\alpha_{x,y}$ with $x,y\in \mathcal X$. If $0< \epsilon
\le \epsilon_0$ then, for $a$ in the support of some $\alpha_{x,y}$
$$\chi(a)+\epsilon\psi(a) >0 .$$
Also, for $b$ in the support of some $\gamma_x$,
$$(\chi+\epsilon\psi)(b) =0+\epsilon\psi(b)>0.$$

Thus, for each $x\in X$ we have an expression of the form
$$x= \sum_{y\in X} y.\delta_{x,y}$$
with $\delta_{x,x}=\alpha_{x,x}-\beta_1$ and, if
$x\neq y$, $\delta_{x,y}=\alpha_{x,y}$. Thus 
$(\chi+\epsilon\psi)(\delta_{x,y})>0$. It follows that
$\chi+\epsilon\psi\notin \Delta(M)$ for $0<\epsilon<\epsilon_0$. Thus 
$\psi\notin  LC_{\Delta(M)}(\chi)$, as required.

\end{proof}

\begin{lemma}\label{firstineq}  Let $\chi \in A^*$ and let $B$ denote 
the kernel of $\chi$. Then
$$\rank(\chi)+\dim_{DB}(TC_{\chi}(M))\le \dim_{DA} (M).$$
\end{lemma}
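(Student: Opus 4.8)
The plan is to exhibit, inside $M$, a torsion-free $DC$-submodule where $C \le A$ has rank $\operatorname{rk}(\chi) + \dim_{DB}(TC_\chi(M))$, so that the inequality follows from the definition of $\dim_{DA}(M)$. Write $r = \operatorname{rk}(\chi)$ and $s = \dim_{DB}(TC_\chi(M))$. First I would choose a subgroup $B' \le B$ of rank $s$ such that $TC_\chi(M)$ contains a non-zero torsion-free $DB'$-submodule; this is exactly what $\dim_{DB}(TC_\chi(M)) = s$ provides. Separately, since $\chi$ has rank $r$, the image $\chi(A)$ is a free abelian group of rank $r$, and I can pick $a_1, \dots, a_r \in A$ whose images under $\chi$ form a $\mathbb{Q}$-basis of $\chi(A) \otimes \mathbb{Q}$; let $A_+ = \langle a_1, \dots, a_r\rangle$, so $A_+$ maps to a finite-index subgroup of $\chi(A)$ and $A_+ \cap B$ is trivial (after possibly replacing the $a_i$ by multiples, $A_+$ is free of rank $r$ and meets $B$ trivially). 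Then $C = A_+ \oplus B'$ has rank $r + s$.

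Next I would produce the torsion-free $DC$-submodule. Lift a non-zero torsion-free $DB'$-submodule $\overline{P}$ of $TC_\chi(M) = \mathcal{X}.DA(0)/\mathcal{X}.DA(+)$ to a $DB'$-submodule $P$ of $\mathcal{X}.DA(0) \le M$; the point is that $P \cap \mathcal{X}.DA(+) = 0$ inside the associated graded sense, i.e. the trailing-coefficient projection restricted to $P$ is injective. Now consider the $DC$-submodule $P.DA_+$ of $M$ generated by $P$. The elements of $A_+$ act by the units $\bar a_i$, and because $\chi(a_i)$ are $\mathbb{Q}$-linearly independent, distinct elements of $A_+$ sit in distinct $\chi$-levels; this lets me define a filtration of $M$ by the subsemigroups $\{a \in A : \chi(a) \ge c\}$ for $c$ ranging over $\chi(A_+)$, and the key claim is that $P.DA_+$ is "graded" with respect to this filtration in the sense that an element $\sum_i p_i \bar{a}^{(i)}$ (with the $\bar{a}^{(i)}$ ranging over distinct elements of $\bar A_+$ and $p_i \in P$) vanishes only if each $p_i = 0$. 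Granting this claim, $P.DA_+$ is a free $DA_+$-module on a torsion-free $DB'$-module, hence a torsion-free $DC$-module, and it is non-zero; this gives $\dim_{DA}(M) \ge r + s$.

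The main obstacle is establishing that claim — the "no collapse across $\chi$-levels" statement — and more precisely handling the subtlety that lifting a torsion-free submodule of $TC_\chi(M)$ to $M$ need not behave multiplicatively under the action of $DA_+$. The clean way to argue is to pass to the Rees-type construction: filter $DA$ by $F_c = D\{a : \chi(a) \ge c\}$, note that the associated graded ring over the submonoid generated by $A_+$ and $B$ is itself a crossed product of $D$ with $A_+ \oplus B$ (the filtration splits because $\chi|_{A_+}$ is injective on a finite-index piece), and observe that $\dim$ is preserved or only increases on passing between a filtered module and its associated graded — since the associated graded of $P.DA_+$ contains a copy of $\overline{P} \otimes_{D} D A_+$, a torsion-free module of rank $r+s$ over the graded crossed product. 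The cleanest formulation may avoid explicit lifting altogether: show directly that if $\psi \in B'^*$ is a regular point witnessing $\dim \overline{P}$ and one extends $\psi$ along a splitting to $C^*$, then a generic small perturbation $\chi + \epsilon\psi$ stays in $\Delta(M)$ by combining Lemma~\ref{firstLCtoTC} with the fact that $\Delta$ of a rank-$(r+s)$ torsion-free submodule has full local dimension — but I expect the filtration/associated-graded route to be the most robust, with the delicate point being that the graded ring really is a crossed product over $A_+ \oplus B$ rather than something smaller.
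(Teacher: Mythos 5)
Your high-level strategy matches the paper's: pick a subgroup of $B$ of rank $s=\dim_{DB}(TC_\chi(M))$ witnessing the dimension of the trailing coefficient module, enlarge it to a subgroup $C$ of $A$ of rank $\rank(\chi)+s$, and exhibit a nonzero torsion-free $DC$-submodule of $M$. The gap — which you candidly flag — is in the crucial step where you need the lifted object to be torsion-free over $DC$, and neither of your two sketches actually closes it. Lifting a whole $DB'$-submodule $\overline P$ of $TC_\chi(M)$ to a $DB'$-submodule $P$ of $\mathcal X.DA(0)$ with $P\cap\mathcal X.DA(+)=0$ is a splitting assertion that is not automatic, and the subsequent claim that $P.DA_+$ is free/graded is both unestablished and stronger than what you need (you only need $DC$-torsion-freeness, not freeness over $DA_+$). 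Your alternative via a Rees/associated-graded construction is also only gestured at, and you would have to be careful since $\chi$ is real-valued (not $\mathbb Z$-valued), and elements of $A_+$ can have negative $\chi$-values, so the ``filtration by $\chi$-level'' needs to be handled as an $\mathbb R$-indexed or order-type filtration, not a $\mathbb Z$-graded one.

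The paper closes the gap much more economically and you could patch your argument to match. Rather than lifting a submodule, pick a single $\overline y\in TC_\chi(M)$ with $\overline y.DB'\cong DB'$ (such $\overline y$ exists because the dimension of $TC_\chi(M)$ is $s$: any nonzero element of a nonzero torsion-free $DB'$-submodule generates a free cyclic module, $D$ being a division ring), and lift $\overline y$ to any $y\in\mathcal X.DA(0)$. Take $C$ isolated with $B+C=A$ and $C\cap B=B'$. Now suppose $y.\alpha=0$ for some nonzero $\alpha\in DC$. Multiply $\alpha$ by $\bar a_0^{-1}$ where $a_0\in\Supp(\alpha)$ attains $\min_{a\in\Supp(\alpha)}\chi(a)$; this keeps $\alpha$ in $DC$, keeps $y.\alpha=0$, and normalises so that $\alpha=\alpha_0+\alpha_1$ with $\Supp(\alpha_0)\subseteq\{a:\chi(a)=0\}$, $\alpha_0\neq 0$, and $\chi(\alpha_1)>0$. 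Then $\Supp(\alpha_0)\subseteq C\cap B=B'$, so $\alpha_0\in DB'$, and projecting $y.\alpha=0$ into $TC_\chi(M)$ gives $\overline y.\alpha_0=0$, contradicting $\overline y.DB'\cong DB'$. This single-element normalisation argument is exactly the ``no collapse across $\chi$-levels'' point you were after, but it sidesteps the need to lift a whole submodule or to build a graded ring.
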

\begin{proof}
Pick an isolated subgroup $B_1$ of $B$ of rank equal to the dimension 
of $TC_{\chi}(M)$ so that, for some
$\overline y\in TC_{\chi}(M)$, we have $\overline y.DB_1\cong DB_1.$ 
Now choose an isolated subgroup $C$ of $A$
so that $B+C=A$ and so that $C\cap B=B_1$. Choose $y\in M$ so that 
$y$ has the image $\overline y$ in
$TC_{\chi}(M)$. We claim that $y.DC \cong DC$. If this claim is true, then
\begin{eqnarray*}
\dim(M) \ge 
\rank(C)&=&\rank(A/B)+\rank(B_1)\\&=&\rank(A/B)+\dim(TC_{\chi}(M))\\&=&\rank(\chi)+\dim(TC_{\chi}(M)).
\end{eqnarray*}
(Recall that $\rank(\chi)=\rank(\im(\chi))=\rank(A/\ker(\chi))$.)

If $y.DC$ is not isomorphic to $DC$ then there is some element 
$\alpha\in DC$ such that $y.\alpha=0$. By
multiplying $\alpha$, if necessary, by a suitable element of $C$, we 
can assume that $\alpha=\alpha_0+\alpha_1$ with
$\chi(a)=0$ for every $a$ in the support of $\alpha_0$ and 
$\chi(\alpha_1)>0$ and $\alpha_0\neq 0$. Thus $\alpha_0\in DB\cap
DC=DB_1$. Passing to
$TC_{\chi}(M)$, we have $\overline y.\alpha_0=0$. This contradicts 
the assumption that
$\overline y.DB_1\cong DB_1$ and so completes the proof of the claim.
\end{proof}

\begin{prop} \label{firsteq} If $\chi \in \Delta^*(M)$ then
$$\rank(\chi)+\dim_{DB}(TC_{\chi}(M))= \dim_{DA} (M).$$
\end{prop}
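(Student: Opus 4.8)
The inequality $\rank(\chi)+\dim_{DB}(TC_{\chi}(M))\le\dim_{DA}(M)$ is Lemma~\ref{firstineq}. The plan for the reverse inequality is to bound $\dim_{DA}(M)$ from below by $\dim LC_{\chi}(\Delta(M))$ and from above (via Lemma~\ref{firstineq}) by the same quantity, so that the hypothesis $\chi\in\Delta^*(M)$ enters only through the statement that $\dim LC_{\chi}(\Delta(M))=\dim_{DA}(M)$.

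First I would record that, for \emph{every} $\chi\in A^*$ with kernel $B$,
$$\dim LC_{\chi}(\Delta(M))\ \le\ \rank(\chi)+\dim_{DB}(TC_{\chi}(M)).$$
By Lemma~\ref{firstLCtoTC} there is an inclusion $LC_{\chi}(\Delta(M))\subseteq\pi_B^{-1}(\Delta(TC_{\chi}(M)))$, so it suffices to bound the dimension of the right-hand side. Now $TC_{\chi}(M)$ is a finitely generated $DB$-module, being generated over $DB$ by the images of the finite set $\mathcal X$, so the dimension of its $\Delta$-set equals $\dim_{DB}(TC_{\chi}(M))$ by the identification, recorded in Section~2, of the dimension of a module over such a crossed product with the dimension of its $\Delta$-set. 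Moreover $\pi_B\colon A^*\to B^*$ is a surjective $\mathbb R$-linear map whose kernel $\Hom(A/B,\mathbb R)$ has dimension $\rank(A/B)=\rank(\chi)$, so the preimage under $\pi_B$ of a rational polyhedron of dimension $d$ is again a rational polyhedron, of dimension $d+\rank(\chi)$. This gives the displayed inequality, and together with Lemma~\ref{firstineq} we obtain, for every $\chi$,
$$\dim LC_{\chi}(\Delta(M))\ \le\ \rank(\chi)+\dim_{DB}(TC_{\chi}(M))\ \le\ \dim_{DA}(M).$$
So the proposition reduces to showing that $\dim LC_{\chi}(\Delta(M))=\dim_{DA}(M)$ whenever $\chi\in\Delta^*(M)$.

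This last step is where $\chi\in\Delta^*(M)$ is essential, and it is the main obstacle: the equality $\dim LC_{\chi}(\Delta(M))=\dim_{DA}(M)$ was recorded in Section~2 only for regular points, and must be propagated to all of $\Delta^*(M)$. Write $m=\dim_{DA}(M)$. The key observation is that $\Delta^*(M)$, being the Euclidean closure of the set of regular points and a rational polyhedron of dimension $m$, is of pure dimension $m$: write it as a finite union of closed convex polyhedra and discard any one contained in the union of the others; if a remaining piece $Q_0$ had dimension $<m$, then a point in the relative interior of $Q_0$ lying in no other piece has a neighbourhood $U$ with $U\cap\Delta^*(M)$ of dimension $<m$, so $U$ contains no regular point (a neighbourhood of a regular point in $\Delta(M)$ is an $m$-ball, all of whose points are regular and hence lie in $\Delta^*(M)$), contradicting the density of the regular points in $\Delta^*(M)$. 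Hence $\chi$ lies in a convex polyhedral piece $Q\subseteq\Delta^*(M)\subseteq\Delta(M)$ with $\dim Q=m$, and the local cone $LC_{\chi}(Q)$ of a convex polyhedron at one of its points has dimension $\dim Q=m$. Since $LC_{\chi}(Q)\subseteq LC_{\chi}(\Delta(M))$ and, $\Delta(M)$ being polyhedral of dimension $m$, every local cone of $\Delta(M)$ has dimension at most $m$, we conclude $\dim LC_{\chi}(\Delta(M))=m$. The equality of the proposition then follows from the squeeze above.
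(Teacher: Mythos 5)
Your proposal is correct and follows essentially the same route as the paper's proof: both use Lemma \ref{firstineq} for one inequality and, for the other, the observation that a point of $\Delta^*(M)$ lies in an $m$-dimensional polyhedron inside $\Delta(M)$, so that $LC_{\chi}(\Delta(M))$ has dimension $m=\dim_{DA}(M)$, which combined with Lemma \ref{firstLCtoTC} and the dimension count through $\pi_B$ forces equality. The only difference is that you spell out in detail the pure-dimensionality of $\Delta^*(M)$ and the finite generation of $TC_{\chi}(M)$ over $DB$, points the paper leaves implicit.
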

\begin{proof} Suppose that the dimension of $M$ is $m$. If $\chi\in 
\Delta^*(M)$
then $\chi$ lies in at least one polyhedron of dimension $m$ within 
$\Delta(M)$ and so $LC_{\Delta(M)}(\chi)$
has dimension $m$. By Lemma \ref{firstLCtoTC}, 
$\pi_B^{-1}(\Delta(TC_{\chi}(M)))$ has dimension at least $m$ and so
$TC_{\chi}(M)$ has dimension at least $m-r$ where $r$ is the 
dimension of the kernel of $\pi_B$. But $r$ is just
the rank of $\chi$. Thus
$$\dim(TC_{\chi}(M))\ge m-r=\dim M - \rank (\chi).$$
Combining this with Lemma \ref{firstineq} gives the result.
\end{proof}

\begin{lemma}\label{secondLCtoTC}  Let $\chi \in \Delta^*(M)$ and let 
$B$ denote the kernel of $\chi$. Then
$$LC_{\Delta^*(M)}(\chi) \subseteq 
\pi_B^{-1}(\Delta^*(TC_{\chi}(M))).$$\end{lemma}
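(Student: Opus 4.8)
We already have from Lemma \ref{firstLCtoTC} the inclusion $LC_{\Delta(M)}(\chi) \subseteq \pi_B^{-1}(\Delta(TC_{\chi}(M)))$, and since $\Delta^*(M)\subseteq\Delta(M)$ this gives $LC_{\Delta^*(M)}(\chi) \subseteq \pi_B^{-1}(\Delta(TC_{\chi}(M)))$. So the work is to sharpen this to the starred sets on the right-hand side, i.e.\ to show that a ray in $LC_{\Delta^*(M)}(\chi)$ lands not merely in $\Delta(TC_{\chi}(M))$ but in its Euclidean closure of regular points. The natural strategy is a dimension-counting argument: one shows that a point $\psi$ coming from the local cone of the \emph{full-dimensional} part of $\Delta(M)$ at $\chi$ must lie in a full-dimensional piece of $\pi_B^{-1}(\Delta(TC_{\chi}(M)))$, hence its image $\pi_B(\psi)$ lies in a full-dimensional piece of $\Delta(TC_{\chi}(M))$, which by the structure theorem of \cite{jrjgcjbb2} (every point outside $\Delta^*$ is trapped in a polyhedron of dimension one less than $\dim$) forces $\pi_B(\psi)\in\Delta^*(TC_{\chi}(M))$.

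\textbf{Key steps.} First I would fix $\psi\in LC_{\Delta^*(M)}(\chi)$, so that $\chi+\epsilon\psi\in\Delta^*(M)$ for all small $\epsilon\ge 0$; I may assume $\psi$ is a regular point of $\Delta^*(M)$ along the ray (or pass to the cone of the top-dimensional polyhedron containing $\chi$, using Proposition \ref{firsteq} which pins down $\dim TC_\chi(M)=\dim M-\rank\chi =: m - r$). Second, I would use Lemma \ref{firstLCtoTC} applied along the ray to see that a whole relatively-open neighbourhood of $\psi$ inside $LC_{\Delta^*(M)}(\chi)$ — which has dimension $m$ since $\chi\in\Delta^*(M)$ — maps into $\pi_B^{-1}(\Delta(TC_{\chi}(M)))$; projecting by $\pi_B$ (whose kernel has dimension $r=\rank\chi$) produces an $(m-r)$-ball inside $\Delta(TC_{\chi}(M))$ around $\pi_B(\psi)$. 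Third, since $\dim TC_\chi(M)=m-r$ by Proposition \ref{firsteq}, this ball is full-dimensional, so $\pi_B(\psi)$ is a regular point of $\Delta(TC_{\chi}(M))$; a fortiori $\pi_B(\psi)\in\Delta^*(TC_{\chi}(M))$. Finally, to handle the $\psi$ that are not themselves regular points of $\Delta^*(M)$ (boundary of the local cone, lower-dimensional faces), I would take a limit: such $\psi$ are Euclidean limits of regular $\psi'$ in the local cone, each of whose projections lies in $\Delta^*(TC_\chi(M))$, and $\Delta^*(TC_\chi(M))$ is Euclidean-closed by definition, so $\pi_B(\psi)$ lies in it too. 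Continuity of $\pi_B$ makes this limiting step clean.

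\textbf{Main obstacle.} The delicate point is step two: extracting a genuine $m$-dimensional ball in $\Delta(TC_{\chi}(M))$ from the local-cone containment, rather than just a point. Lemma \ref{firstLCtoTC} is a pointwise statement, so I need to argue that the $\epsilon$-thresholds $\epsilon_0$ appearing in its proof can be taken \emph{uniform} over a small neighbourhood of $\psi$ (they depend only on the finitely many supports of the $\alpha_{x,y}$ and $\gamma_x$, which can be chosen once the relations $x.(1+\gamma_x)=\sum y\alpha_{x,y}$ are fixed, i.e.\ independently of perturbing $\psi$). Granting that uniformity, a two-dimensional wedge of directions $(\chi+\epsilon\psi')$ avoids $\Delta(M)$ simultaneously, which translates — exactly as in the proof of Lemma \ref{firstLCtoTC} — into a neighbourhood of $\pi_B(\psi')$'s worth of points lying \emph{outside} $\Delta(TC_\chi(M))$ being impossible, i.e.\ the open set maps \emph{into} $\Delta(TC_\chi(M))$. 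The rest is bookkeeping with $\pi_B$ and the dimension identity of Proposition \ref{firsteq}. I expect the whole argument to be short once this uniformity is isolated.
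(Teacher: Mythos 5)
Your approach is essentially the same as the paper's: a dimension count via Lemma \ref{firstLCtoTC} and Proposition \ref{firsteq}, comparing $\dim LC_{\Delta^*(M)}(\chi)=m$ against $\dim\Delta(TC_\chi(M))=m-\rank\chi$ to force $\pi_B(LC_{\Delta^*(M)}(\chi))$ into $\Delta^*(TC_\chi(M))$, with the paper projecting the whole local cone at once while you argue point by point and close up with a limit. The ``uniformity'' issue you single out as the main obstacle is in fact vacuous: Lemma \ref{firstLCtoTC} already asserts the set inclusion $LC_{\Delta(M)}(\chi)\subseteq\pi_B^{-1}(\Delta(TC_\chi(M)))$, so an $m$-dimensional neighbourhood inside the local cone is automatically carried into $\Delta(TC_\chi(M))$ without revisiting the $\epsilon_0$ in that lemma's proof.
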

\begin{proof}
Suppose that $\Delta(M)$ has dimension $m$. Then $\Delta^*(M)$ is a 
finite union of convex polyhedra of dimension $m$. Thus
$LC_{\Delta^*(M)}(\chi)$ has dimension $m$ and so 
$\pi_B(LC_{\Delta^*(M)}(\chi))$ has dimension at least $m-r$ where $r$
is the dimension of the kernel of $\pi_B$ or, equivalently, the rank 
of $\chi$. As $\pi_B(LC_{\Delta^*(M)}(\chi))\subseteq
\pi_B(LC_{\Delta(M)}(\chi))$, then, from Lemma \ref{firstLCtoTC}, 
$\pi_B(LC_{\Delta^*(M)}(\chi))$ is a subset of\\
$\Delta(TC_{\chi}(M))$ having dimension at least $m-r$. But 
Proposition \ref{firsteq} tells us that the dimension of
$\Delta(TC_{\chi}(M))$ is exactly $m-r$. Thus 
$\pi_B(LC_{\Delta^*(M)}(\chi))$ is actually a subset of
$\Delta^*(TC_{\chi}(M))$, as required.
\end{proof}

\section{Trailing coefficient modules and local cones}
We retain the notation of the previous section. In particular, $D$ is 
a division ring, $A$ is an abelian group, $DA$ is a crossed
product of $D$ by $A$, $\chi\in A^*$, $B$ is the kernel of $\chi$ and 
$M$ is a $DA$-module. The aim in this section is to prove
equality in Lemma \ref{secondLCtoTC}. We begin with a simple case in 
Lemma \ref{onerelLCtoTC} and then  proceed to a less restricted case, 
the
`co-dimension one'  case, in Lemma \ref{codimoneLCtoTC}. Then, in 
Proposition \ref{LCtoTC}, we use the fact that an $m$-dimensional 
Delta-set can
be reconstructed from its projections onto $m+1$-dimensional 
subspaces to reduce the general case to the `co-dimension one' case.

\begin{lemma}\label{onerelLCtoTC}  Suppose that $M$ is a cyclic 
1-relator module. Let $\chi \in \Delta^*(M)$ and
let $B$ denote the kernel of $\chi$. Then
$$LC_{\Delta^*(M)}(\chi) = \pi_B^{-1}(\Delta^*(TC_{\chi}(M))).$$
\end{lemma}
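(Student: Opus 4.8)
The plan is to prove the reverse inclusion to Lemma~\ref{secondLCtoTC} in the special case of a cyclic $1$-relator module, exploiting the very explicit description of $\Delta(M)$ in this case. Write $M = DA/\rho DA$ for a single element $\rho \in DA$; then by the characterisation (1) of the $\Delta$-set one checks that $\chi \notin \Delta(M)$ precisely when the trailing term of $\rho$ with respect to $\chi$ — the part of $\rho$ supported where $\chi$ is minimised — can, after the appropriate normalisation, be written as a unit plus terms on which $\chi$ is strictly positive. So $\Delta(M)$ is cut out by finitely many conditions coming from the Newton polytope of $\rho$: roughly, $\chi \in \Delta(M)$ iff the $\chi$-minimal face of $\Supp(\rho)$ does not reduce to a single vertex with unit coefficient after the cocycle twisting is accounted for. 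The point is that this is a genuinely combinatorial/polyhedral description, so $\Delta^*(M)$ is an explicit rational polyhedral complex built from the normal fan of the Newton polytope of $\rho$.

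First I would fix $\chi \in \Delta^*(M)$, let $B = \ker\chi$, and give the explicit description of $TC_\chi(M)$ as a cyclic $1$-relator $DB$-module: its defining relation is (a normalisation of) $\rho_\chi$, the $\chi$-leading/trailing part of $\rho$, viewed in $DB$ (after multiplying by a suitable $\bar a$ so the support meets $B$). Then $\Delta^*(TC_\chi(M)) \subseteq B^*$ has the same kind of polyhedral description, now in terms of the Newton polytope of $\rho_\chi$, which is exactly the $\chi$-minimal face of the Newton polytope of $\rho$. Next, given $\psi \in A^*$ with $\psi|_B \in \Delta^*(TC_\chi(M))$, I want to show $\psi \in LC_{\Delta^*(M)}(\chi)$, i.e. $\chi + \epsilon\psi \in \Delta(M)$ for all small $\epsilon > 0$ (and in fact in the top-dimensional part). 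The key geometric fact is that for $\epsilon$ small, the $(\chi+\epsilon\psi)$-minimal face of the Newton polytope of $\rho$ is the $\psi|_B$-minimal face of the $\chi$-minimal face — i.e. $\epsilon$-perturbation of $\chi$ in direction $\psi$ selects, within the $\chi$-face, the sub-face chosen by $\psi$. Hence the condition "$\chi+\epsilon\psi \in \Delta(M)$" translates, for small $\epsilon$, into exactly the condition "$\psi|_B \in \Delta(TC_\chi(M))$", which we are assuming (and the starred/top-dimensional refinement follows since $\chi \in \Delta^*(M)$ and dimensions match by Proposition~\ref{firsteq}). Combined with Lemma~\ref{secondLCtoTC} this gives equality.

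Concretely the steps are: (i) record the explicit $\Delta$-description for cyclic $1$-relator modules from (1); (ii) identify $TC_\chi(M)$ as a cyclic $1$-relator $DB$-module with relation the $\chi$-leading part of $\rho$; (iii) prove the polyhedral stability lemma that $(\chi+\epsilon\psi)$-selection on $\Supp\rho$ factors as $\chi$-selection then $\psi$-selection, for all sufficiently small $\epsilon>0$ (uniformly over the finitely many relevant supports, exactly as $\epsilon_0$ is chosen in the proof of Lemma~\ref{firstLCtoTC}); (iv) deduce set-theoretic equality of $LC_{\Delta^*(M)}(\chi)$ with $\pi_B^{-1}(\Delta^*(TC_\chi(M)))$, using Lemma~\ref{secondLCtoTC} for one containment and (i)--(iii) for the other, and using Proposition~\ref{firsteq} to upgrade $\Delta$ to $\Delta^*$ (matching dimensions forces the regular points to correspond).

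I expect the main obstacle to be step (ii)/(iii): making precise and correct the claim that the trailing-coefficient module of a cyclic $1$-relator module is again cyclic $1$-relator with the "obvious" relation, and controlling the cocycle twisting carefully enough that the passage $M \rightsquigarrow TC_\chi(M)$ really does correspond to passing to the $\chi$-minimal face of the Newton polytope. In particular one must be careful that cancellation among the $\chi$-minimal terms of $\rho$ cannot occur in an unexpected way, and that the normalisation by $\bar a$ (to move the support into $B$) is harmless; here the fact that $DA$ has no zero divisors, together with the ordering of $A$, does the work, and the uniform choice of $\epsilon_0$ as in Lemma~\ref{firstLCtoTC} handles the "for all small $\epsilon$" quantifier. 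Once the $1$-relator bookkeeping is pinned down, the polyhedral statement in (iii) is elementary convex geometry of the normal fan.
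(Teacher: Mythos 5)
Your proposal follows essentially the same route as the paper's proof: explicit description of $\Delta$ for cyclic $1$-relator modules, identification of $TC_\chi(M)$ as the $1$-relator $DB$-module with relator $r_\chi$, and the $\epsilon$-perturbation argument showing $r_{\chi+\epsilon\phi}=(r_\chi)_\psi$ for all sufficiently small $\epsilon>0$, combined with Lemma~\ref{secondLCtoTC} for the reverse containment. The paper's one simplification, which you could adopt, is the observation that $\Delta(M)=\Delta^*(M)$ for $1$-relator modules (and likewise for $TC_\chi(M)$), which makes the dimension-matching appeal to Proposition~\ref{firsteq} in your step (iv) unnecessary.
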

\begin{proof}
If $M$ is a cyclic 1-relator module with relator $r$, then 
$\Delta(M)$ is described in Proposition
2.3 of \cite{jrjgcjbb1}. For each $\chi\in A^*$, write 
$r=r_{\chi}+s_{\chi}$ where if $a, b$ are in the support of
$r_{\chi}$ and if $c$ is in the support of $s_{\chi}$ then 
$\chi(a)=\chi(b)<\chi(c)$. By multiplying $r$ by the inverse of some
element of the support of $r_{\chi}$, we can and will assume that 
$\chi(a)=0$ for all $a$ in the support of $\chi$.

Then $\chi\in\Delta(M)$ if and only if the support of $r_{\chi}$ 
contains more than one element. In this latter case, it is
easily verified that $TC_{\chi}(M)$ (using the same generator as was 
used for $M$) is a 1-relator module with relator
$r_{\chi}\in DB$. Thus $\Delta(TC_{\chi}(M))$ is calculated in an 
analogous way to that used for $\Delta(M)$; that is,
$\psi\in \Delta(TC_{\chi}(M))$ if and only if $(r_{\chi})_{\psi}$ has 
support with more than one element.  It is an easy
consequence of the definition, or the description in 
\cite{jrjgcjbb1}, that $\Delta(M)=\Delta^*(M)$
for one-relator modules $M$; a similar comment then holds for $TC_{\chi}(M)$.

Suppose that
$\psi\in \Delta(TC_{\chi}(M))$ and choose $\phi\in A^*$ so that 
$\pi_B(\phi)=\phi|_B=\psi$. Because $\chi(s_{\chi})>0$,
we can choose $\epsilon_0$ so that
$\chi(c)+\epsilon\phi(c)>0$ for $c$ in the support of $s_{\chi}$ and 
$0<\epsilon\le \epsilon_0$. Consider
$\chi+\epsilon\phi$ for $0< \epsilon\le \epsilon_0$. We have that
\begin{eqnarray*}
r_{\chi+\epsilon\phi} &=& (r_{\chi})_{\chi+\epsilon\phi}\quad \text{ as
$(\chi+\epsilon\phi)(s_{\chi})>0$}\\
&=&(r_{\chi})_{\epsilon\phi}\\
&=&(r_{\chi})_{\psi} \quad \text{ as $r_{\chi} \in DB$.}
\end{eqnarray*}
By assumption, $(r_{\chi})_{\psi}$ has support with more than one 
element and hence so also does $r_{\chi+\epsilon\phi}$.
Thus $\chi+\epsilon\phi\in \Delta(M)$ for $0<\epsilon\le \epsilon_0$ and so
$\phi\in LC_{\Delta(M)}(\chi)$.

We have thus shown that
$\pi_B^{-1}(\Delta^*(TC_{\chi}(M))) \subseteq 
LC_{\Delta^*(M)}(\chi)$. The reverse inclusion is provided by
Lemma \ref{secondLCtoTC} and so the proof is complete.
\end{proof}

\begin{lemma}\label{inducedTC}  Suppose that $A_1$ is an isolated 
subgroup of $A$ and that $N$
is a $DA_1$-module. Let $\chi \in A^*$ and let $B$ denote the kernel 
of $\chi$. Let
$\chi_1=\pi_{A_1}(\chi)$. Then
$$\Delta(TC_{\chi}(N\otimes_{DA_1} DA))= (\pi^B_{B\cap A_1})^{-1} 
\Delta(TC_{\chi_1}(N))$$ and so
$$\Delta^*(TC_{\chi}(N\otimes_{DA_1} DA))= (\pi^B_{B\cap A_1})^{-1} 
\Delta^*(TC_{\chi_1}(N)).$$
\end{lemma}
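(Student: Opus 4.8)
The plan is to unwind the definition of the trailing coefficient module for the induced module $N\otimes_{DA_1}DA$ and identify it, up to the relevant base change, with the induction of $TC_{\chi_1}(N)$. First I would pick a finite generating set $\mathcal Y$ for $N$ as a $DA_1$-module and observe that, choosing a transversal $T$ to $A_1$ in $A$, the set $\mathcal X=\{y\otimes\bar t: y\in\mathcal Y\}$ need not be finite, so instead I would work directly with the characterisation in Lemma \ref{techlem} (and the remark following it that it suffices to test the condition on a generating set). Writing $A(0),A(+)$ for the sub(semi)groups determined by $\chi$ and $A_1(0),A_1(+)$ for those determined by $\chi_1$ in $A_1$, the key structural observation is that $A(0)\cap A_1=A_1(0)$ and $A(+)\cap A_1=A_1(+)$, because $A_1$ is isolated and so $\chi|_{A_1}=\chi_1$ detects sign exactly as $\chi$ does on $A_1$. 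Hence $(N\otimes_{DA_1}DA)\cdot DA(0)/(N\otimes_{DA_1}DA)\cdot DA(+)$ is, as a $DB$-module, induced from the $D(B\cap A_1)$-module $N\cdot DA_1(0)/N\cdot DA_1(+)=TC_{\chi_1}(N)$ along the inclusion $B\cap A_1\le B$; that is,
$$TC_{\chi}(N\otimes_{DA_1}DA)\cong TC_{\chi_1}(N)\otimes_{D(B\cap A_1)}DB.$$

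Given this identification, the statement reduces to a standard fact about $\Delta$-sets of induced modules: if $C\le B$ is isolated and $P$ is a $DC$-module, then $\Delta(P\otimes_{DC}DB)=(\pi^B_C)^{-1}(\Delta(P))$. I would either quote this from the elementary properties of $\Delta$ in \cite[Section 3]{jrjgcjbb2} or prove it quickly from characterisation (1.1): a relation $m\cdot(1+\alpha)=0$ with $\alpha$ supported on the $\psi$-positive part of $B$ can be pushed down to $C$ by projecting along the decomposition $B=C\oplus(\text{complement})$, using isolatedness of $C$ to control signs, and conversely such a relation over $C$ induces up. Applying this with $C=B\cap A_1$ and $P=TC_{\chi_1}(N)$ gives the first displayed equality.

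The second equality, the one for the $\Delta^*$-sets, should then follow formally: $\pi^B_{B\cap A_1}$ is a linear surjection of the relevant real vector spaces, so taking preimages commutes with the operations defining $\Delta^*$ — preimages of rational polyhedra are rational polyhedra, preimages of regular points are regular (the ball dimension going up by $\rank(B)-\rank(B\cap A_1)=\rank(\chi)$), and preimages commute with Euclidean closure since the map is open. So once the $\Delta$-version is in hand the $\Delta^*$-version is immediate.

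I expect the main obstacle to be the first paragraph: making precise, and checking carefully, that inducing the crossed product along an isolated subgroup really does commute with forming the trailing coefficient module. The subtlety is that $DA(0)$ is not free as a right $DA_1(0)$-module in an obvious graded way, and one must verify that $\mathcal Y\otimes DA(0)$ is spanned, modulo $\mathcal Y\otimes DA(+)$, precisely by translates $\bar t$ with $t$ in a transversal chosen inside $\ker\chi$ — this is where isolatedness of $A_1$ is used, to guarantee such a transversal exists and that no cancellation of $\chi$-degrees occurs between the $A_1$-part and the transversal part. Once that bookkeeping is done, everything else is routine.
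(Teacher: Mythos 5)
Your overall strategy — first establish a module isomorphism $TC_{\chi}(N\otimes_{DA_1}DA)\cong TC_{\chi_1}(N)\otimes_{D(B\cap A_1)}DB$, then quote the known formula $\Delta(P\otimes_{DC}DB)=(\pi^B_C)^{-1}\Delta(P)$ (Lemma 3.4 of [BG2]) — is genuinely different from the paper's. The paper never proves the isomorphism at all; it applies the criterion of Lemma~\ref{techlem} directly to generators of the form $n\otimes 1$, decomposes the witnessing elements $\alpha_n,\beta_n$ along a transversal of $A_1$ in $A$, and reads off the $t=1$ component. This is shorter and bypasses the bookkeeping you anticipate. Your route would, if carried out, yield strictly more (an actual isomorphism of $DB$-modules), so it is not a wasted idea.

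However, the step you flag as ``the main obstacle'' contains a real error as stated. You propose to choose a transversal for $A_1$ in $A$ lying inside $\ker\chi=B$. Such a transversal exists if and only if $A=A_1B$, i.e.\ if and only if $\chi(A_1)=\chi(A)$, and this can fail: take $A=\mathbb Z^2$, $A_1=\mathbb Z\times\{0\}$, $\chi(a,b)=b$; then $B=A_1$ and no non-identity coset of $A_1$ meets $B$. Isolatedness of $A_1$ gives only that $A/A_1$ is torsion-free (so $DA$ is free over $DA_1$ on a transversal), not that a transversal lands in $\ker\chi$. What the correct bookkeeping actually produces is indexed by a transversal of $B\cap A_1$ in $B$ (which always exists inside $B$, trivially): writing $\alpha\in DA(0)$ along a fixed transversal $\mathcal T$ of $A_1$ in $A$, only the cosets $tA_1$ meeting $B$ contribute nontrivially to $\mathcal X.DA(0)/\mathcal X.DA(+)$, and shifting each such piece by an element of $A_1$ with $\chi_1$-value $-\chi(t)$ converts the index set into a transversal of $B\cap A_1$ in $B$. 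Also, the ``key observation'' that $A(0)\cap A_1=A_1(0)$ is a tautology from $\chi_1=\chi|_{A_1}$ and has nothing to do with isolatedness. So the proposal as written does not close, though the isomorphism it aims for is true and the second paragraph (the $\Delta\Rightarrow\Delta^*$ passage via openness of the linear surjection $\pi^B_{B\cap A_1}$) is fine.
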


\begin{proof} Let $\psi\in B^*$ and suppose that $\psi\notin 
\Delta(TC_{\chi}(N\otimes_{DA_1} DA))$.
By Lemma \ref{techlem}, for each $n\in N$, there exists $\alpha_n
\in DA$ and $\beta_n \in DB$ with $n\otimes 1=(n\otimes 1)(\alpha_n 
+\beta_n)$ and $\chi(\alpha_n)>0$,
$\psi(\beta_n)>0$. Fix a transversal
$\mathcal T$, containing 1, for $A_1$ in $A$; we can do this so that 
it contains a transversal for $B\cap A_1$ in $B$. Each
element of the support of $\alpha_n$ can be written uniquely as a 
product of an element in $A_1$ and an element in $\mathcal
T$. Thus
$\alpha_n$ can be written as
$$\alpha_n =\sum_{t\in \mathcal T} \alpha_n(t)t\qquad\text{ with 
}\alpha_n(t) \in DA_1.$$
Clearly $\chi_1(\alpha_n(1))>0$. Similar comments apply for $\beta_n$;
in particular $\beta_n(1)\in D(B\cap A_1)$ and $\psi(\beta_n(1))>0$. 
Because $N\otimes_{DA_1} DA$ is an induced module, it
follows that
$n=n.(\alpha_n(1)+\beta_n(1))$. Thus the restriction of $\psi$ to 
$A_1$, that is $\pi^B_{B\cap A_1}(\psi)$, does
not lie in  $\Delta(TC_{\chi_1}(N))$.

Suppose, conversely, that $\pi^B_{B\cap A_1}(\psi)$ does not lie in 
$\Delta(TC_{\chi_1}(N))$.
Then, by Lemma \ref{techlem}, for each $n\in N$, there exist 
$\alpha_n\in DA_1$ and $\beta_n \in
D(B\cap A_1)$ such that $n=n.(\alpha_n+\beta_n)$ and $\chi_1(a)>0$ 
for each $a$ in the support of $\alpha_n$ and
$\psi_1(b)>0$ for each $b$ in the support of $\beta_n$.  Clearly, 
$\alpha_n\in DA$ and  $\beta_n\in DB$ with $\chi(\alpha_n)>0$
and $\psi(\beta_n)>0$. We thus have condition (\ref{notintc}) holding 
for those $m\in N\otimes_{DA_1} DA$ of the form
$n\otimes 1$. But the latter elements suffice to generate 
$N\otimes_{DA_1} DA$ as $DA$-module and so, using the comment at the
end of the proof of Lemma \ref{techlem}, it follows that $\psi\notin 
\Delta(TC_{\chi}(N\otimes_{DA_1} DA))$.

  The final equality of the lemma is an immediate deduction.
\end{proof}

\begin{lemma}\label{codimoneLCtoTC}  Suppose that $A$ has rank $n$ 
and that $M$ is a $DA$-module of dimension $n-1$. Let
$\chi \in \Delta^*(M)$ and let $B$ denote the kernel of $\chi$. Then
$$LC_{\chi}(\Delta^*(M)) = \pi_B^{-1}(\Delta^*(TC_{\chi}(M))).$$
\end{lemma}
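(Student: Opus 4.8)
Throughout, recall $B=\ker\chi$ and that $\chi\in\Delta^*(M)$. The inclusion $LC_\chi(\Delta^*(M))\subseteq\pi_B^{-1}(\Delta^*(TC_\chi(M)))$ is Lemma~\ref{secondLCtoTC}, so the task is the reverse inclusion $\pi_B^{-1}(\Delta^*(TC_\chi(M)))\subseteq LC_\chi(\Delta^*(M))$. The plan is to peel $M$ apart until the one-relator case of Lemma~\ref{onerelLCtoTC} applies. The first move is to replace $M$ by the associated graded module $\operatorname{gr} M$ of an affiliated series $0=M_0\subset M_1\subset\dots\subset M_t=M$ (each $M_i/M_{i-1}$ torsion-free over $DA/P_i$ with $P_i=\Ann(M_i/M_{i-1})$ prime; such a series exists by standard Noetherian theory, cf.\ \cite{MR}). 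Applying Lemma~\ref{ext} and the corresponding elementary additivity of $\Delta$ itself over each short exact sequence in the series, one gets $\Delta(M)=\bigcup_i\Delta(M_i/M_{i-1})=\Delta(\operatorname{gr} M)$ and $\Delta(TC_\chi(M))=\Delta(TC_\chi(\operatorname{gr} M))$; hence $\Delta^*(M)$, $\Delta^*(TC_\chi(M))$ and $LC_\chi(\Delta^*(M))$ all agree with the analogous objects for $\operatorname{gr} M$, and it suffices to treat $\operatorname{gr} M=\bigoplus_j N_j$ with each $N_j$ torsion-free over a prime ring $DA/P_j$.

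Next, the summands with $\dim(DA/P_j)\le n-2$ may be deleted: for such a summand $\dim TC_\chi(N_j)\le n-2-\rank(\chi)$ by Lemma~\ref{firstineq}, whereas $\Delta^*$ of any module is the Euclidean closure of its top-dimensional regular locus; so deleting these summands leaves all three sets unchanged (one also uses here that $\chi\in\Delta^*(\operatorname{gr} M)$ forces some surviving summand to have $TC_\chi$ of the full dimension $n-1-\rank\chi$, by Proposition~\ref{firsteq}). We may thus assume $\dim(DA/P_j)=n-1$ for all $j$. At this point the structure of the quantum torus intervenes: a prime $P$ of $DA$ with $\dim(DA/P)=n-1$ is generated by a single central element $g$, because the two-sided ideals of $DA$ are induced from ideals of its centre, which is a Laurent polynomial ring in which every height-one prime is principal (cf.\ \cite{Passcr}). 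So $DA/P_j=DA/g_jDA$ is a cyclic one-relator module, and $N_j$, being finitely generated and torsion-free over it, embeds in a finite direct sum of copies of $DA/g_jDA$ with cokernel of dimension $\le n-2$; using $\Delta(X^{\oplus r})=\Delta(X)$ and, once more, the irrelevance of an $(n-2)$-dimensional subquotient, each $N_j$ may be replaced by $DA/g_jDA$ for the purpose of computing the three sets.

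It remains to reassemble the one-relator pieces. For each $j$ with $\chi\in\Delta^*(DA/g_jDA)$, Lemma~\ref{onerelLCtoTC} gives $LC_\chi(\Delta^*(DA/g_jDA))=\pi_B^{-1}(\Delta^*(TC_\chi(DA/g_jDA)))$. For each $j$ with $\chi\notin\Delta^*(DA/g_jDA)$ one has $\chi\notin\Delta(DA/g_jDA)$, since $\Delta=\Delta^*$ for one-relator modules, whence $TC_\chi(DA/g_jDA)=0$ and this summand contributes nothing to $\Delta^*(TC_\chi(\operatorname{gr} M))$. Because $\Delta(M)$ is polyhedral by Wadsley's theorem \cite{Wad}, all the sets in play are finite unions of rational polyhedra; for a finite direct sum of modules of the common dimension $n-1$, the sets $\Delta^*$ and $\Delta^*(TC_\chi(\,\cdot\,))$ are the unions of those of the summands, and $LC_\chi$ commutes with finite unions (a half-space inequality holding along $\chi+\epsilon\psi$ for a sequence $\epsilon\downarrow 0$ holds for all small $\epsilon\ge 0$). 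Chaining these identifications,
$$
\pi_B^{-1}\bigl(\Delta^*(TC_\chi(\operatorname{gr} M))\bigr)=\bigcup_{j:\,\chi\in\Delta^*(DA/g_jDA)}\pi_B^{-1}\bigl(\Delta^*(TC_\chi(DA/g_jDA))\bigr)=\bigcup_{j:\,\chi\in\Delta^*(DA/g_jDA)}LC_\chi(\Delta^*(DA/g_jDA)),
$$
and the last union is contained in $LC_\chi\!\bigl(\bigcup_j\Delta^*(DA/g_jDA)\bigr)=LC_\chi(\Delta^*(\operatorname{gr} M))$, giving the required inclusion.

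The step I expect to be the genuine obstacle is the structural input in the second paragraph — recognising a codimension-one prime quotient of $DA$ as a cyclic one-relator module $DA/gDA$ with $g$ central. For $D$ a field this follows at once from the description of the two-sided ideals of a crossed product of a field with a free abelian group in terms of its centre, but a little extra care is needed when $D$ is a genuine non-commutative division ring. A secondary, purely topological, nuisance is checking that passage to $\operatorname{gr} M$, deletion of low-dimensional summands, and replacement of each $N_j$ by $DA/g_jDA$ really do preserve each of $\Delta^*(M)$, $\Delta^*(TC_\chi(M))$ and $LC_\chi(\Delta^*(M))$; this is precisely where Lemma~\ref{ext}, Lemma~\ref{firstineq}, Proposition~\ref{firsteq} and the polyhedrality of $\Delta$ are invoked.
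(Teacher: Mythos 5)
There is a genuine gap, and it is the structural step you yourself flagged as the likely obstacle. Your reduction assumes (i) that a $DA$-module of dimension $n-1$ admits a filtration whose factors are torsion-free over prime quotients $DA/P_j$ with $P_j\neq 0$, and (ii) that a prime with $\dim(DA/P)=n-1$ is generated by a central element, because the two-sided ideals of $DA$ are controlled by a central Laurent polynomial subring. Both claims fail for the rings this lemma is actually about. A crossed product of a division ring with a free abelian group can be a simple ring with no central units outside $D$ — the generic quantum torus of rank $2$ is the standard example — and such a ring nevertheless has finitely generated modules of dimension $n-1=1$; indeed these faithful ``holonomic'' modules are precisely the objects the paper is built around. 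For such $M$ there are no nonzero primes at all, no affiliated series with torsion-free factors over proper prime quotients (any factor torsion-free over $DA$ itself would have dimension $n$, contradicting $\dim M=n-1$), and no central element $g$ with $M$ related to $DA/gDA$. So the reduction to one-relator modules of the form $DA/g_jDA$ cannot get started in exactly the hard case, and the argument collapses there. (The commutative intuition — dimension drop forces a nonzero annihilator, height-one primes are principal — is what is being imported illegitimately.)

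The paper's proof is organised to avoid this. It first reduces to $M$ cyclic and critical, then runs an induction on $\rank A$ with two cases. If $M$ fails to be torsion-free over some corank-one sub-crossed product $DA_1$, it shows a critical $DA_1$-submodule $N$ induces up ($N.DA\cong N\otimes_{DA_1}DA$), uses $\Delta^*(M)=\pi_{A_1}^{-1}(\Delta^*(N))$ and Lemma~\ref{inducedTC} to transfer the statement to $N$, and applies the inductive hypothesis. In the remaining case — $M$ critical and torsion-free over every corank-one $DA_1$, which includes your problematic faithful modules — it invokes Theorem 2.4 of \cite{jrjgcjbb1} to write $\Delta^*(M)=(\Delta(V_1)\cap\Delta(V_2))^*$ for two one-relator modules $V_1,V_2$ each having $M$ as a quotient, and only then applies Lemma~\ref{onerelLCtoTC}, to $V_1$ and $V_2$ rather than to $M$, using Lemma~\ref{ext} and Proposition~\ref{firsteq} to compare dimensions. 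If you want to salvage your outline, that sandwiching by one-relator modules surjecting onto $M$ is the ingredient you need in place of the central-element claim; the parts of your argument that only use Lemma~\ref{ext}, Lemma~\ref{firstineq}, Proposition~\ref{firsteq} and polyhedrality are fine but do not by themselves bridge this gap.
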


\begin{proof} Observe firstly that $M$ has a finite series with 
quotients $\{M_1,\dots, M_s\}$
which are cyclic and critical. Further, as $M$ has dimension $n-1$, 
all $M_i$ have dimension at
most $n-1$ and at least one has dimension exactly $n-1$. By Lemma  \ref{ext},
$$\Delta(TC_{\chi}(M))=\cup_{i=1}^s\Delta(TC_{\chi}(M_i)).$$
By Lemma \ref{firsteq}, $TC_{\chi}(M_i)$ has dimension $\dim 
M_i-\rank \chi$ and hence $\Delta^*(TC_{\chi}(M_i))$ also has this 
dimension. Thus
$\Delta^*(TC_{\chi}(M))$ is the union of those
$\Delta^*(TC_{\chi}(M_i))$ for which
$M_i$ has dimension $n-1$. Similarly, $LC_{\chi}(\Delta^*(M))$ is the 
union of those
$LC_{\chi}(\Delta^*(M_i))$ for which $M_i$ has dimension $n-1$. Thus 
we need prove the result
only in case $M$ is cyclic and critical. We shall use an inductive 
argument on the rank of $A$.

We deal firstly with the case that there is a subgroup $A_1$ of $A$ 
with $A/A_1$ infinite cyclic
so that $M$ is not torsion-free as $DA_1$-module. Then the set of 
$DA_1$-torsion elements of $M$
forms a non-zero $DA$-submodule $M_1$ of $M$. Let $N$ be a critical 
$DA_1$-submodule of $M_1$. By
Lemma 2.4 of \cite{jrjgcjbb2}, $N.DA$ has dimension at most $\dim 
N+1$ with equality if and only if $N.DA$ is
induced from $N$. Since $A_1$ has rank $n-1$ and $N$ is a torsion 
$DA_1$-module, $N$ has
dimension at most $n-2$. As $M$ is critical, every non-zero 
submodule, in particular $N.DA$, also has dimension $n-1$. It
follows that $N.DA$ is induced from $N$, that is $n.DA\cong
N\otimes_{DA_1} DA$, and that $N$ has dimension exactly $n-2$.

By Corollary 4.5 of \cite{jrjgcjbb2}, $\Delta^*(M)=\Delta^*(N.DA)$ 
and so, by Lemma 3.4 of \cite{jrjgcjbb2},
\begin{equation}\label{indeq}
\Delta^*(M)=\pi^{-1}_{A_1}(\Delta^*(N)).
\end{equation}
Set $\chi_1=\pi_{A_1}(\chi)=\chi|_{A_1}$. The
inductive argument enables us to assume that
\begin{equation}\label{indass}
LC_{\chi_1}(\Delta^*(N)) = (\pi^{A_1}_{B\cap 
A_1})^{-1}(\Delta^*(TC_{\chi_1}(N)))
\end{equation}
Observe also that the quotient $M/N.DA$ has smaller dimension than 
$M$, because $M$ is critical. By Lemma \ref{ext} we have
$$\Delta(TC_{\chi}(M))=\Delta(TC_{\chi}(N.DA))\cup 
\Delta(TC_{\chi}(M/N.DA))$$ and the dimensions show that
$$\Delta^*(TC_{\chi}(M))=\Delta^*(TC_{\chi}(N.DA).$$

We therefore have
\begin{eqnarray*}
\pi_B^{-1}(\Delta^*(TC_{\chi}(M))) &=& 
\pi_B^{-1}(\Delta^*(TC_{\chi}(N\otimes_{DA_1} DA)))\\
&=& \pi_B^{-1}((\pi^B_{B\cap A_1})^{-1}\Delta^*(TC_{\chi}(N)) \text{ 
by Lemma \ref{inducedTC} }\\
&=&\pi_{B\cap A_1}^{-1}(\Delta^*(TC_{\chi}(N))\\
&=&\pi_{A_1}^{-1} ((\pi^{A_1}_{B\cap A_1})^{-1}(\Delta^*(TC_{\chi}(N)))\\
&=&\pi_{A_1}^{-1} (LC_{\chi_1}(\Delta^*(N)) \text{ by (\ref{indass})}\\
&=&LC_{\chi} \pi_{A_1}^{-1}(\Delta^*(N))\\
&=&LC_{\chi}(\Delta^*(M)) \text{ by (\ref{indeq}).}\\
\end{eqnarray*}
This completes the proof in case $M$ is not torsion-free as $DA_1$-module.

Thus we can assume that $M$ is torsion-free as $DA_1$-module for each 
subgroup $A_1$ with
$A/A_1$ infinite cyclic. Because we are assuming that
$M$ is critical, it follows that every proper quotient of $M$ has 
dimension at most $n-2$ and so must
be torsion as $DA_1$-module for each subgroup $A_1$ with $A/A_1$ 
infinite cyclic. Thus we have the necessary conditions
for Theorem 2.4 of \cite{jrjgcjbb1}  and we can easily deduce from 
the proof of this theorem that
\begin{equation}\label{Vint}\Delta^*(M) = (\Delta(V_1) \cap 
\Delta(V_2))^*\end{equation} where $V_1$ and $V_2$ are 1-relator 
$DA$-modules
each of which has a quotient isomorphic to $M$. Using the fact that 
each module has a quotient isomorphic to $M$, together with   Lemma 
\ref{ext}, we
deduce that
\begin{equation}\label{ineq}\Delta(TC_{\chi}(M)) \subseteq 
\Delta(TC_{\chi}(V_1)) \cap \Delta(TC_{\chi}(V_2)).\end{equation}
Observe that (\ref{Vint}) together with the fact that each 
$\Delta(V_i)$ has dimension $n-1$, shows that $\chi\in 
\Delta^*(V_i)$. By Proposition
\ref{firsteq}, each of the three $\Delta$-sets in (\ref{ineq}) has 
the same dimension, equal to
$(n-1)-\rank(\chi)$. Thus we can replace (\ref{ineq}) by
\begin{equation}\label{MVV}
\Delta^*(TC_{\chi}(M)) \subseteq (\Delta(TC_{\chi}(V_1)) \cap 
\Delta(TC_{\chi}(V_2)))^*.
\end{equation}

Thus we have
\begin{eqnarray*}
LC_{\chi}(\Delta^*(M)) &=& LC_{\chi}((\Delta(V_1) \cap \Delta(V_2))^*)\\
&=&(LC_{\chi}(\Delta(V_1)) \cap LC_{\chi}(\Delta(V_2)))^*\text{ using 
the definition }\\
&\ &   \text{\hspace{7.3cm} of local cones}\\
&=&(\pi_B^{-1}(\Delta(TC_{\chi}(V_1))) \cap 
\pi_B^{-1}(\Delta(TC_{\chi}(V_2))))^*\text{ by Lemma 
\ref{onerelLCtoTC}}\\
&=&\pi_B^{-1}(\Delta(TC_{\chi}(V_1))\cap \Delta(TC_{\chi}(V_2)))^*\\
&\supseteq& \pi_B^{-1}(\Delta^*(TC_{\chi}(M)))\text{ by (\ref{MVV})}.
\end{eqnarray*}
The reverse inequality has been proved in Lemma \ref{secondLCtoTC} 
and so the proof of the lemma is
complete.
\end{proof}
\begin{prop}\label{LCtoTC}  Suppose that $M$ is a $DA$-module. Let
$\chi \in \Delta^*(M)$ and let $B$ denote the kernel of $\chi$. Then
$$LC_{\chi}(\Delta^*(M)) = \pi_B^{-1}(\Delta^*(TC_{\chi}(M))).$$
\end{prop}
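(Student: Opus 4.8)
The plan is to reduce the general case to the codimension-one case, Lemma~\ref{codimoneLCtoTC}, by projecting onto the duals of rank-$(m+1)$ subgroups of $A$, where $m=\dim M$. First, the inclusion $LC_{\chi}(\Delta^*(M))\subseteq\pi_B^{-1}(\Delta^*(TC_{\chi}(M)))$ is already Lemma~\ref{secondLCtoTC}, so only the reverse inclusion needs proof. Since $\Delta(M)$ is a cone (immediate from~(2.1)) and polyhedral by~\cite{jrjgcjbb2}, $\Delta^*(M)$ is a finite union of $m$-dimensional rational polyhedral cones; likewise $\Delta^*(TC_{\chi}(M))$ is a finite union of rational polyhedral cones of dimension $m-\rank\chi$ by Proposition~\ref{firsteq}, and both sides of the claimed identity are finite unions of $m$-dimensional rational polyhedral cones. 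The reduction rests on the reconstruction principle alluded to in the Introduction: an $m$-dimensional rational polyhedral subset of $A^*$ is recovered from its images under the restriction maps $\pi_{A_1}\colon A^*\to A_1^*$ as $A_1$ runs over the isolated rank-$(m+1)$ subgroups of $A$; moreover finitely many such $A_1$ suffice, and this finite family can be chosen to avoid any prescribed set of rank-$(m+1)$ isolated subgroups that is closed of positive codimension.

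Now take $\psi\in A^*$ with $\pi_B(\psi)\in\Delta^*(TC_{\chi}(M))$; the aim is $\psi\in LC_{\chi}(\Delta^*(M))$. By reconstruction it is enough to find a finite family $\mathcal F$ of isolated rank-$(m+1)$ subgroups which reconstructs $\Delta^*(M)$, together with an $\epsilon_0>0$, such that $\pi_{A_1}(\chi+\epsilon\psi)\in\pi_{A_1}(\Delta^*(M))$ for every $A_1\in\mathcal F$ and $0<\epsilon\le\epsilon_0$ --- for then $\chi+\epsilon\psi\in\Delta^*(M)$ on this interval, so $\psi\in LC_{\chi}(\Delta^*(M))$. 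I would choose $\mathcal F$ generically, so that besides reconstructing $\Delta^*(M)$ each $A_1\in\mathcal F$ satisfies: $\pi_{A_1}$ is injective on the linear span of every top-dimensional cone of $\Delta^*(M)$; $\rank(\chi|_{A_1})=\rank\chi$ (possible since $\rank\chi\le m<m+1$); and, writing $B_1=A_1\cap B=\ker(\chi|_{A_1})$, the map $\pi^B_{B_1}$ is injective on the linear span of every top-dimensional cone of $\Delta^*(TC_{\chi}(M))$.

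Fix such an $A_1$ and let $M_1$ be $M$ regarded as a $DA_1$-module. The first injectivity condition, together with the behaviour of $\Delta$ under restriction and the polyhedral structure of $\Delta^*$ from~\cite{jrjgcjbb2}, gives $\pi_{A_1}(\Delta^*(M))=\Delta^*(M_1)$ and $\dim_{DA_1}M_1=m=\rank A_1-1$, while $\chi_1:=\chi|_{A_1}\in\Delta^*(M_1)$ with $\chi_1\ne0$; so Lemma~\ref{codimoneLCtoTC} applies to $M_1$ and yields $LC_{\chi_1}(\Delta^*(M_1))=\pi_{B_1}^{-1}(\Delta^*(TC_{\chi_1}(M_1)))$. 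To relate $TC_{\chi_1}(M_1)$ to $TC_{\chi}(M)$: applying Lemma~\ref{techlem} to $M_1$, and using $DA_1\subseteq DA$, $DB_1\subseteq DB$ and that $M_1$ and $M$ share an underlying module, one checks that if $\psi_1\in B_1^*$ lies outside $\Delta(TC_{\chi_1}(M_1))$ then every extension of $\psi_1$ to $B^*$ lies outside $\Delta(TC_{\chi}(M))$; equivalently $\pi^B_{B_1}(\Delta(TC_{\chi}(M)))\subseteq\Delta(TC_{\chi_1}(M_1))$. Both $\Delta^*$-sets here are pure of dimension $m-\rank\chi$ (Proposition~\ref{firsteq} and $\rank\chi_1=\rank\chi$), so the last injectivity condition promotes this to $\pi^B_{B_1}(\Delta^*(TC_{\chi}(M)))\subseteq\Delta^*(TC_{\chi_1}(M_1))$. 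Since $\pi_{A_1}(\psi)$ restricts on $B_1$ to $\pi^B_{B_1}(\pi_B(\psi))\in\Delta^*(TC_{\chi_1}(M_1))$, we get $\pi_{A_1}(\psi)\in\pi_{B_1}^{-1}(\Delta^*(TC_{\chi_1}(M_1)))=LC_{\chi_1}(\Delta^*(M_1))$, so there is $\epsilon_{A_1}>0$ with $\chi_1+\epsilon\,\pi_{A_1}(\psi)\in\Delta^*(M_1)=\pi_{A_1}(\Delta^*(M))$ for $0<\epsilon\le\epsilon_{A_1}$. Taking $\epsilon_0=\min_{A_1\in\mathcal F}\epsilon_{A_1}>0$ and invoking reconstruction then finishes the proof.

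I expect the main obstacle to be bookkeeping rather than any single inequality: producing one finite family $\mathcal F$ that simultaneously reconstructs $\Delta^*(M)$ and meets all the genericity (injectivity and rank) requirements; dealing with the fact that $M_1$ need not be finitely generated over $DA_1$, which in practice forces one to work throughout with an auxiliary finitely generated $DA_1$-submodule $N$ of $M$ with $N.DA=M$ and to re-establish the properties of $\Delta^*$ and of $TC$ used above in that setting (probably the true content of the argument); and arranging that the reconstruction principle cooperates with the passage to local cones, so that the finitely many thresholds $\epsilon_{A_1}$ can legitimately be replaced by a single $\epsilon_0$ --- which is precisely why it matters that the reconstruction involves only finitely many projections.
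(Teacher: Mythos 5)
Your proposal follows essentially the same route as the paper: project onto the duals of rank-$(m+1)$ subgroups chosen generically, apply Lemma~\ref{codimoneLCtoTC} on each projection, relate $TC_{\chi_i}(M_i)$ to $TC_{\chi}(M)$ via Lemma~\ref{techlem}, upgrade the $\Delta$-inclusion to a $\Delta^*$-inclusion by matching dimensions, and use finiteness of the projection family to extract a single $\epsilon_0$. This is precisely what the paper does, invoking Theorem~4.3 of \cite{jrjgcjbb2} for the reconstruction step. The one place you overestimate the difficulty is finite generation of $M_i$ over $DA_i$: under the genericity conditions you impose (the kernels of the projections meet the supporting spaces trivially, and the relevant polyhedra are all covered by the chosen subspaces), Proposition~3.8 of \cite{jrjgcjbb2} directly yields that $M_i$ is finitely generated, so the auxiliary-submodule workaround you anticipate is not needed; Proposition~3.7 of the same reference then gives $\pi_{A_i}(\Delta(M))=\Delta(M_i)$ cleanly.
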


\begin{proof} By Theorem 4.4 of \cite{jrjgcjbb2}, we know that 
$\Delta^*(TC_{\chi}(M))$ is a rational polyhedron and hence so also
is  $\pi_B^{-1}(\Delta^*(TC_{\chi}(M)))$. By Proposition 
\ref{firsteq}, we know that the latter has dimension $m$, say,
equal to that of $M$. Thus
$\pi_B^{-1}(\Delta^*(TC_{\chi}(M)))$ is a finite union of
$m$-dimensional convex polyhedra. The same holds true for 
$\Delta^*(M)$ and Theorem 4.4 of \cite{jrjgcjbb2} shows that the remainder of $\Delta(M)$ 
is also contained
in a finite union of $m$-dimensional convex polyhedra. Let 
$S=\pi_B^{-1}(\Delta^*(TC_{\chi}(M)))\cup \Delta(M)$.

We wish to apply Theorem 4.3 of \cite{jrjgcjbb2}. The requirement on 
$S$ stated in the theorem is satisfied because $S$ itself has been 
chosen to lie in a
polyhedron of dimension $m$. There therefore exists a finite set 
$\mathcal X$ of
$m$-dimensional subspaces of $A^*$ and a finite set of projections 
$\pi_i$ with image  $A_i^*$, with $A_i$ a subgroup of $A$ of rank 
$m+1$, so that, if we
denote by $M_i$ the set $M$ considered as $DA_i$-module, then
\begin{enumerate}
\item $S\subseteq \bigcup_{ X\in \mathcal X} X$;
\item for each $i$, $\ker(\pi_i)$ meets each element of $\mathcal X$ 
trivially (in the language of \cite{jrjgcjbb2}, this follows from the 
fact that $\pi_i$ is
regular with respect to $\mathcal X$);.
\item $\Delta^*(M)= \cap_i \pi_{A_i}^{-1}(\pi_{A_i}(\Delta^*(M)))$ 
(this follows from (3) of Theorem 4.3 of \cite{jrjgcjbb2} in the same 
way that (4) of
that theorem  follows from (3)).
\end{enumerate}
Using (1) and (2) above, together with Proposition 3.8 of 
\cite{jrjgcjbb2} ,  we see that $M_i$ is finitely generated for each 
$i$. By Proposition 3.7 of
\cite{jrjgcjbb2}, $\pi_{A_i}(\Delta(M))=\Delta(M_i)$. Using (2) 
above, $\pi_{A_i}$ maps each element of $\mathcal X$ faithfully and 
so $\dim \Delta(M)
=\dim \Delta(M_i)$. Thus
\begin{equation}\label{eqp0}\pi_{A_i}(\Delta^*(M))=\Delta^*(M_i).\end{equation}

Therefore we can replace (3) above by
\begin{equation}\label{eqp1}
  \Delta^*(M)= \cap_i \pi_{A_i}^{-1}(\Delta^*(M_i)).
\end{equation}

Observe that (\ref{eqp1}) implies easily that
\begin{equation} \label{eqp2}
LC_{\chi}(\Delta^*(M))= \cap_i \pi_{A_i}^{-1}(LC_{\chi_i}(\Delta^*(M_i)))
\end{equation}
where $\chi_i$ denotes the restriction of $\chi$ to $A_i$.
Note that the dimension of $M_i$ is one less than the rank of $A_i$. 
Thus we can apply Lemma \ref{codimoneLCtoTC} to show
that
\begin{equation} \label{eqp3}
LC_{\chi_i}(\Delta^*(M_i)) = (\pi^{A_i}_{B\cap 
A_i})^{-1}(\Delta^*(TC_{\chi_i}(M_i))).
\end{equation}
and so, combining (\ref{eqp2}) with (\ref{eqp3}), we have
\begin{equation} \label{eqp4}
LC_{\chi}(\Delta^*(M))= \cap_i (\pi_{B\cap 
A_i})^{-1}(\Delta^*(TC_{\chi_i}(M_i))).
\end{equation}

A straightforward application of  Lemma \ref{techlem} shows that
\begin{equation} \label{eqp5}
\pi^B_{B\cap A_i} ( \Delta(TC_\chi(M))) \subseteq \Delta (TC_{\chi_i}(M_i)).
\end{equation}

We wish to establish the version of (\ref{eqp5}) where the 
$\Delta$-sets are replaced by their corresponding
$\Delta^*$-versions. This is immediate once we know that the 
dimensions on each side of the equation coincide.
We have chosen the subgroups $A_i$ so that the subspaces 
$\ker(\pi_i)$ meet the elements of $\mathcal X$ trivially. In
particular, this implies that the $\ker(\pi_i)$ intersect the 
supporting spaces (that is the spaces spanned by the convex polyhedra 
making up the Delta set)
of $\pi_B^{-1}(\Delta^*(TC_{\chi}(M)))$ trivially. It follows easily 
that the kernel of $\pi^B_{B\cap
A_i}$ meets each supporting space of $\Delta^*(TC_{\chi}(M))$ 
trivially and that $\ker(\pi_{A_i})$ meets  $\ker(\pi_{B})$
trivially. Thus $\pi^B_{B\cap A_i} ( \Delta(TC_\chi(M)))$ has 
dimension equal to that of $\Delta(TC_\chi(M))$.

It follows from (\ref{eqp0}) that  $\chi_i \in \Delta^*(M_i)$. By 
Proposition \ref{firsteq}, the dimension of $\Delta(TC_\chi(M))$ is 
$m-\rank \chi$ and
the dimension of $\Delta(TC_{\chi_i}(M_i))$ is $\dim M_i - \rank 
\chi_i$. We have already seen that $\dim M =\dim M_i$. Since 
$\ker(\pi_i)$ meets
$\ker(\pi_{B}$) trivially, $A_i$ supplements $B$ in $A$. Recalling 
that $B$ is the kernel of $\chi$, it follows that $\rank \chi_i$ = 
$\rank \chi|_{A_i}$ =
$\rank \chi$. Thus the dimensions of the two sides of the inequality 
in (\ref{eqp5}) are equal and so we have
\begin{equation} \label{eqp6}
\pi^B_{B\cap A_i} ( \Delta^*(TC_\chi(M))) = (\pi^B_{B\cap A_i} ( 
\Delta^*(TC_\chi(M))))^*
\subseteq \Delta^* (TC_{\chi_i}(M_i)).
\end{equation}
Thus,
\begin{eqnarray*} \label{eqp7}
\pi_B^{-1}(\Delta^*(TC_{\chi}(M)) &\subseteq& \cap 
_i\pi_B^{-1}((\pi^B_{B\cap A_i})^{-1}(\Delta^*
(TC_{\chi_i}(M_i))) \qquad\text{ by (\ref{eqp6}).}\\
&=& \cap _i \pi_{B\cap A_i}^{-1}(\Delta^* (TC_{\chi_i}(M_i)))\\
&=& LC_{\chi}(\Delta^*(M))\qquad\text{ by (\ref{eqp4}).}
\end{eqnarray*}
The reverse inclusion is Lemma \ref{secondLCtoTC} and so the proof of 
the proposition is complete.
\end{proof}

Let $C$ be a subgroup of $A$ and let $V=\ker(\pi_C)$. A character 
$\chi\in A^*$ is {\it generic} for $V$ if
$\chi\in V$ and  $TC_{\chi}(M)$ is locally of finite dimension as a 
module for $DC$. More discussion of genericness can be found
in Section 3 of [BG3]

\begin{coroll} \label{geneqlc}$\chi$ is generic for $V$ if and only 
if $LC_{\chi}(\Delta^*(M))\subseteq V$.

\end{coroll}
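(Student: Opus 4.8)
The plan is to reduce both implications to the single combinatorial assertion that $\Delta^*(TC_{\chi}(M))\subseteq\ker(\pi^B_C)$, where $B=\ker(\chi)$, and then to invoke Proposition \ref{LCtoTC}. Throughout we take $\chi\in\Delta^*(M)$; if $\chi\notin\Delta^*(M)$ then $LC_{\chi}(\Delta^*(M))$ is empty, so the right‑hand condition is automatic, while on the left the definition of genericness has $\chi\in V$ built in and is what must be checked. Two preliminary observations. First, the description of $\Delta(M)$ recalled in Section 2 is unaffected by replacing $\chi$ with $t\chi$ for $t>0$, so $\Delta(M)$, and hence $\Delta^*(M)$, is a cone; consequently $\chi\in\Delta^*(M)$ forces $(1+\epsilon)\chi\in\Delta^*(M)$ for small $\epsilon>0$, so $\chi\in LC_{\chi}(\Delta^*(M))$. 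Thus $LC_{\chi}(\Delta^*(M))\subseteq V$ already forces $\chi\in V$, and genericness forces $\chi\in V$ by definition; so in both directions we may assume $\chi\in V$, that is, $C\le B$. Second, $TC_{\chi}(M)$ is finitely generated as a $DB$-module, being generated by the images of $\mathcal X$.

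With $C\le B$ we have $\pi_C(\psi)=\pi^B_C(\pi_B(\psi))$ for $\psi\in A^*$; and since $A/B\cong\im(\chi)$ is torsion-free, $B$ is a direct summand of $A$ and $\pi_B\colon A^*\to B^*$ is onto. Hence, using Proposition \ref{LCtoTC}, the set $LC_{\chi}(\Delta^*(M))=\pi_B^{-1}(\Delta^*(TC_{\chi}(M)))$ is contained in $V=\ker(\pi_C)$ precisely when $\Delta^*(TC_{\chi}(M))\subseteq\ker(\pi^B_C)$. It remains to match this with the condition that $TC_{\chi}(M)$ be locally of finite dimension over $DC$. One shows that $TC_{\chi}(M)$ is locally finite-dimensional over $DC$ if and only if, for each $c$ in some fixed basis of $C$, it is $D\langle c\rangle$-torsion: the forward implication is clear, and if some non-zero $m$ generated an infinite-dimensional $DC$-submodule then for a suitable basis element $c$ the annihilator of $m$ in $D\langle c\rangle$ would vanish, producing a free $D\langle c\rangle$-submodule. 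Being $D\langle c\rangle$-torsion amounts to $\dim_{D\langle c\rangle}(TC_{\chi}(M))=0$; by the $\Delta$-set formalism of \cite{jrjgcjbb2} — in particular the identification of the dimension of a finitely generated module with the dimension of its $\Delta^*$-set, and the compatibility of restriction of scalars with projection of the $\Delta$-set — this happens exactly when $\pi^B_{\langle c\rangle}(\Delta^*(TC_{\chi}(M)))=\{0\}$. Intersecting the subspaces $\ker(\pi^B_{\langle c\rangle})$ over a basis of $C$ gives $\ker(\pi^B_C)$, so $TC_{\chi}(M)$ is locally of finite dimension over $DC$ if and only if $\Delta^*(TC_{\chi}(M))\subseteq\ker(\pi^B_C)$, and the corollary follows by combining the two equivalences.

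The step I expect to be the main obstacle is this last matching: one must cross from a finiteness condition on $TC_{\chi}(M)$ regarded over $DC$ — over which $TC_{\chi}(M)$ need not be finitely generated — to a polyhedral condition on $\Delta^*$, by applying the apparatus of \cite{jrjgcjbb2} to the restrictions of the finitely generated $DB$-module $TC_{\chi}(M)$ to the rank-one subgroups $\langle c\rangle\le C$. The delicate part is to see that $\Delta^*$ rather than all of $\Delta$ is the right invariant: the part of $\Delta(TC_{\chi}(M))$ lying outside $\Delta^*(TC_{\chi}(M))$ is enclosed in a polyhedron of strictly smaller dimension and, crucially, does not affect whether $\dim_{D\langle c\rangle}(TC_{\chi}(M))$ is zero. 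Everything else — the cone property of $\Delta^*(M)$, the surjectivity of $\pi_B$, and the manipulation of the restriction maps — is routine bookkeeping.
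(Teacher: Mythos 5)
Your reduction via Proposition~\ref{LCtoTC} is exactly the paper's opening move, and the unwinding of $\pi_B^{-1}(\Delta^*(TC_\chi(M)))\subseteq V$ into $\chi\in V$ plus $\Delta^*(TC_\chi(M))\subseteq\ker(\pi^B_C)$ is likewise the paper's second step (the paper states it as $\ker(\pi_B)\subseteq V$ together with $\Delta^*(TC_\chi(M))\subseteq\pi_B(V)$, which is the same thing once one notes $\pi_B(V)=\ker(\pi^B_C)$). The cone observation showing $\chi\in LC_\chi(\Delta^*(M))$ is a nice supplementary remark, though the paper gets $\chi\in V$ directly from $\ker(\pi_B)\subseteq V$.

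The divergence, and the place where your argument has a genuine gap, is the final equivalence: $\Delta^*(TC_\chi(M))\subseteq\ker(\pi^B_C)$ iff $TC_\chi(M)$ is locally of finite dimension over $DC$. The paper does not reprove this; it cites Lemma~2.4 of \cite{jrjgcjbb3}, which is precisely the technical result converting containment of a $\Delta^*$-set in a rational subspace into a local-finiteness statement over the dual subring. Your attempt to re-derive that lemma in-line via a rank-one reduction is where things come apart. First, the ``compatibility of restriction of scalars with projection of the $\Delta$-set'' from \cite{jrjgcjbb2} (Propositions~3.7--3.8) applies to finitely generated modules, and $TC_\chi(M)$ is finitely generated as a $DB$-module but typically \emph{not} as a $D\langle c\rangle$-module; so the identification of $\dim_{D\langle c\rangle}(TC_\chi(M))=0$ with $\pi^B_{\langle c\rangle}(\Delta^*(TC_\chi(M)))=\{0\}$ is not available off the shelf. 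Second, you flag the $\Delta$-versus-$\Delta^*$ distinction as ``delicate'' but never actually resolve it: torsion over $D\langle c\rangle$ is a statement about the full $\Delta$-set, not a priori about $\Delta^*$, and the lower-dimensional part of $\Delta(TC_\chi(M))$ lying outside $\Delta^*(TC_\chi(M))$ could project nontrivially under $\pi^B_{\langle c\rangle}$ without changing dimensions. Ruling that out is exactly the content one is outsourcing to the cited lemma, and a one-sentence appeal to ``the $\Delta$-set formalism'' does not discharge it. In short: your route is the paper's route up to and including the reduction, but the closing step needs the external reference rather than the sketched in-house argument.
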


\begin{proof}
By Proposition \ref{LCtoTC},
$$LC_{\chi}(\Delta^*(M))\subseteq V \text{ if and only if } 
\pi_B^{-1}(\Delta^*(TC_{\chi}(M))) \subseteq V.$$
But,
$\pi_B^{-1}(\Delta^*(TC_{\chi}(M))) \subseteq V $ if and only if 
$\ker (\pi_B) \subseteq V $ and
$\Delta^*(TC_{\chi}(M)) \subseteq \pi_B(V)$.
But $\ker (\pi_B) \subseteq V$ if and only if $\chi\in V$ and, by 
Lemma 2.4 of \cite{jrjgcjbb3}, $\Delta^*(TC_{\chi}(M)) \subseteq
\pi_B(V)$ if and only if  $\Delta^*(TC_{\chi}(M))$ is locally of 
finite dimension as a module for the dual of $\pi_B(V)$.
The latter condition is exactly that required for $\chi$ to be 
generic for $V$ and so the proof is complete.
\end{proof}
Observe that this implies immediately that any point of $\Delta^*(M)$ 
which is non-generic for some carrier space must lie
in at least two distinct carrier spaces. Proposition 3.6 of 
\cite{jrjgcjbb3}  guarantees that, under certain technical
conditions, a carrier space contains non-generic points. We can use 
that with Corollary \ref{geneqlc} to show that, with suitable easily 
satisfied assumptions,  
any two carrier spaces of a Delta-set must 
have non-zero intersection. 
(See, for example, Lemma \ref{nontrivinter}.)

\section{Modules over central crossed products}
\subsection{Preliminaries}
In this section, $F$ will denote a field, $A$ will denote a free 
abelian group of rank $n$ and $FA$ will be a crossed
product of $A$ with $F$ in which $F$ is central. Thus $FA$ is an 
$A$-graded algebra in which each component is isomorphic to $F$.

Recall from Theorem 4.4 of \cite{jrjgcjbb2} that if $M$ is a finitely generated 
$FA$-module of dimension $m$  then $\Delta^*(M)$ is a finite union of
$m$-dimensional convex polyhedra. The subspace spanned by one of 
these polyhedra is called a {\it carrier space} of $\Delta^*(M)$.
It is also shown in [BG2] that this subspace is rationally defined 
and so is the kernel of a projection map $\pi_B: A^* \rightarrow
B^*$ where $B$ is a subgroup, which we can assume isolated, of $A$. 
We call such a subgroup $B$ a {\it carrier space subgroup} of $M$.

Before stating the main result of this section, it will be convenient 
to collect a couple of results of the first author \cite{Brcp}. They
will be key tools in the following.
\begin{prop}[Brookes] \label{cjbb} Let $M$ be a non-zero finitely 
generated module over $FA$. Then
\begin{enumerate}
\item $\dim M= n-\rank B$ for some subgroup $B$ of $A$ with $FB$ commutative.
\item A carrier space subgroup of $M$ has a  subgroup $B$ of finite 
index with $FB$ commutative.
\item If $F$ is the centre of $FA$ and $M$ is impervious then $M$ is 
$FB$-torsion-free for every  subgroup $B$ of $A$ with $FB$ 
commutative.
\end{enumerate}
\end{prop}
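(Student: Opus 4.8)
The plan is to express everything via the commutator form of the crossed product. Since $F$ is central, $\bar a\bar a'=\lambda(a,a')\,\bar a'\bar a$ for a well-defined alternating bi-multiplicative map $\lambda\colon A\times A\to F^{\times}$, and then for a subgroup $B$ one has: $FB$ is commutative iff $\lambda|_{B\times B}\equiv 1$; $FB$ is virtually commutative iff $\lambda(B\times B)$ is finite (equivalently, since it is a finitely generated subgroup of $F^{\times}$, consists of roots of unity); and $F$ is the centre of $FA$ iff the radical $\operatorname{rad}\lambda=\{a:\lambda(a,A)=1\}$ is trivial. I would prove (2) first, deduce (1), then do (3).

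For (2), let $B$ be a carrier space subgroup of $M$ and $V=\ker\pi_B$ the corresponding carrier space. I would take $\chi\in A^{*}$ to be a suitably generic point of a $(\dim M)$-dimensional polyhedron $P$ of $\Delta^{*}(M)$ spanning $V$ — in the relative interior of $P$, in no other polyhedron of $\Delta^{*}(M)$ and in no lower-dimensional piece of $\Delta(M)$, and in addition chosen so that $\bar\chi\colon A/B\to\mathbb R$ is injective, i.e. $\ker\chi=B$; all of this is generic, hence possible, and then $\chi\in P\subseteq V$ (the $\Delta$-set being a cone). For such $\chi$, $LC_{\chi}(\Delta^{*}(M))=V$, so by Corollary~\ref{geneqlc} $\chi$ is generic for $V$, that is, $TC_{\chi}(M)$ is locally of finite dimension over $FB$. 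But since $\ker\chi=B$, the module $TC_{\chi}(M)=\mathcal X\cdot FA(0)/\mathcal X\cdot FA(+)$ is a module over $FA(0)/FA(+)\cong FB$, finitely generated over $FB$ by the images of the finite generating set $\mathcal X$, and it is nonzero because $\chi\in\Delta^{*}(M)\subseteq\Delta(M)$. A finitely generated module that is locally of finite dimension is finite dimensional over $F$; hence $FB$ has a nonzero module of some finite dimension $N$, and for $b,b'\in B$ the operator of $\bar b'$ is conjugate (by that of $\bar b$) to $\lambda(b,b')$ times itself, whence $\lambda(b,b')^{N}=1$. Thus $\lambda(B\times B)$ is finite and $FB$ is virtually commutative, proving (2). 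Then (1) follows at once: since $M\ne0$, $\Delta^{*}(M)$ has a carrier space of dimension $\dim M$, hence a carrier space subgroup $B$ with $\rank B=n-\dim M$, and by (2) a finite-index $B'\le B$ has $FB'$ commutative with $\rank B'=n-\dim M$, so $\dim M=n-\rank B'$.

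For (3), assume $F$ is the centre of $FA$ and $FB$ is commutative (so $\lambda|_{B\times B}\equiv1$; we may take $\rank B\ge1$, the other case being vacuous), and suppose some nonzero $x\in M$ has $x\gamma=0$ with $0\ne\gamma\in FB$. Replacing $M$ by $xFA$ — still impervious (a nonzero submodule of an impervious module is impervious) and $FB$-torsion, using that $FB$ is normalised by every $\bar a$ (as $\bar a\bar b\bar a^{-1}=\lambda(a,b)\bar b\in FB$), that $FB\setminus\{0\}$ is an Ore set in $FA$, that $x\bar a$ is killed by a conjugate of $\gamma$, and that sums of $FB$-torsion elements are $FB$-torsion — I may assume $M$ is $FB$-torsion. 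Now $FB=F[B]$ is a commutative Noetherian domain on which $A$ acts by the translation action of $A/B^{\perp}\hookrightarrow\Hom(B,F^{\times})$, where $B^{\perp}=\{a:\lambda(a,B)=1\}$. As $M$ is finitely generated over $FA$, its support over $FB$ is a union of finitely many $A$-orbits of irreducible closed sets, so it has a minimal prime $\mathfrak p$; such $\mathfrak p$ lies in an $A$-orbit and is the annihilator of a nonzero $FB$-torsion element, so $\mathfrak p$ is associated to $M$ and $\mathfrak p\ne0$. A standard Clifford-theoretic reduction to the inertia subgroup — isolating the $\mathfrak p$-part, which is a module over $FH$ for $H=\Stab_{A}(\mathfrak p)\supseteq B^{\perp}$, and using minimality of $\mathfrak p$ to see it induces a submodule of $M$ — yields a nonzero submodule of $M$ induced from $FH$. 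It remains to show $H$ has infinite index in $A$, which contradicts imperviousness. If $H$ had finite index, then the finitely generated group $A/B^{\perp}$ would be contained in the Zariski closed subgroup of $\Hom(B,\overline{F}^{\times})$ stabilising $V(\mathfrak p)$, whose identity component is a subtorus $\Hom(B/B_1,\overline{F}^{\times})$ for some saturated $B_1\le B$; so after passing to a finite-index $A_0\le A$, every $\lambda(a,-)|_{B}$ with $a\in A_0$ would be trivial on $B_1$, giving $\lambda(a,b_1)^{[A:A_0]}=1$ for all $a\in A$ and $b_1\in B_1$, and since a finitely generated torsion subgroup of $F^{\times}$ is finite this puts a nonzero multiple of each $b_1$ into $\operatorname{rad}\lambda=1$, forcing $B_1=0$; then the stabiliser of $V(\mathfrak p)$ is the whole torus, $V(\mathfrak p)=\operatorname{Spec}(FB)$, and $\mathfrak p=0$ — impossible. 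This last step, proving $H$ has infinite index, is exactly where the hypothesis that $F$ is the centre does the work, and is the one I expect to be the real obstacle; the remainder is bookkeeping, the only delicacy being that $M$ need not be finitely generated over $FB$ (although $FA$ itself is Noetherian), so the Clifford-theoretic step needs a little care.
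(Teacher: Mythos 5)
Your proposal takes a genuinely different route from the paper, because the paper does not really prove this Proposition at all: it cites \cite{Brcp} directly (parts (1) and (3) are Theorems 3 and 4 there, and part (2) is extracted from the proof of Theorem 3 via the remark that the carrier space contains a character $\chi$ of maximal rank whose kernel is the carrier space subgroup). Your argument instead rebuilds everything inside the framework of the present paper, and the result is a nice alternative. For (2), picking $\chi$ of maximal rank in the relative interior of a spanning polyhedron so that $LC_{\chi}(\Delta^*(M))=V$, invoking Corollary~\ref{geneqlc}, noting that when $\ker\chi=B$ the module $TC_{\chi}(M)\cong \mathcal X\cdot FA(0)/\mathcal X\cdot FA(+)$ is a finitely generated $FB$-module (so locally of finite dimension forces finite dimension), and then taking determinants to force $\lambda(B\times B)$ to be torsion, hence finite, is correct and pleasingly short. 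Worth noting: this uses the paper's Section~3 machinery, which Brookes did not have when he proved Theorem~3; there is no circularity, since Corollary~\ref{geneqlc} is established independently of Proposition~\ref{cjbb}. Your deduction of (1) from (2) is immediate and correct. (2) also makes the paper's own elliptic remark (``the kernel of $\chi$ is exactly the carrier space subgroup'') precise, essentially for free.

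For (3), the strategy (reduce to $xFA$ being $FB$-torsion; produce a nonzero minimal associated prime $\mathfrak p$ of $M$ over the commutative Noetherian domain $FB$; Clifford theory to get a submodule induced from $FH$ with $H=\Stab_A(\mathfrak p)$; then use the Zariski closure of $\lambda(A,\text{-})|_B$ in the torus to show that if $H$ had finite index then $\operatorname{rad}\lambda\neq 1$, contradicting centrality) is a plausible self-contained replacement for Brookes's Theorem~4, and the torus argument for why $H$ must have infinite index is the right kind of leverage for the ``$F$ is the centre'' hypothesis. However, as you yourself flag, the Clifford-theoretic step is genuinely not routine here: $M$ is not finitely generated over $FB$, so you cannot immediately appeal to finiteness of the set of associated primes over $FB$, nor to the standard isotypic decomposition; you need to work with the (directed) union of the finitely generated $FB$-submodules $x\bar a\cdot FB$ and show that the minimal associated primes still form finitely many $A$-orbits and that the $\mathfrak p$-torsion part is an $FH$-submodule whose $A$-translates are independent. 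This is the one real gap: it is bridgeable, but the argument as written does not establish it, and until it is written out the proof of (3) is a sketch. The other minor point is that the claim ``$\pi(U)$ has dimension 1'' style of assertions throughout (e.g.\ that $\lambda(A,b_1)$ is finite) all depend on $F^{\times}$ having only finitely many roots of unity of bounded order \emph{within a finitely generated subgroup}; you use exactly this, and it is correct, but it deserves a sentence.
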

\begin{proof} (1) is Theorem 3 of \cite{Brcp} and (3) is Theorem 4. Part (2) 
is an immediate consequence of the proof of Theorem 3.
Theorem 3 is proved by taking a character in $\Delta(M)$ of maximal 
rank $m$ and showing that some subgroup of finite index in the kernel 
of that character has a subgroup $B$ of finite index with $DB$ 
commutative. If we start with a carrier space subgroup then this 
corresponds
to a carrier space which must contain a character $\chi$ of maximal 
rank. Then it is easy to show that the kernel of $\chi$ is exactly 
the carrier space subgroup and the claim follows.
\end{proof}

In the following, $\zeta(A)$ is the largest subgroup of $A$ so that 
$F\zeta(A)$ is central in $FA$.
Similarly for any subgroup $B$ of $A$ 
we define $\zeta(B)$ to be the largest subgroup of $B$ for which 
$F\zeta(B)$ is central in $FB$.

\begin{defin} 
Suppose that $F$ is the 
centre of $FA$ and let $M$ be a finitely generated $FA$-module. We 
shall say that $M$ is {\rm \hol} if 
\begin{enumerate}
  \item $\rank 
A =2\dim M$;
  \item if $B$ is any subgroup of $A$ so that $FB$ is 
commutative, then $M$ is 
$FB$-torsion-free.
\end{enumerate}

\end{defin}

The modules that 
arise from the study of modules over nilpotent groups will have 
$\zeta(A)$ trivial and will be impervious. But then condition 2 is 
guaranteed by (3) of Proposition \ref{cjbb}.
Further, (1) and (3) of 
Proposition \ref{cjbb} guarantee that $2\dim M \ge \rank A$ and we 
will be able to guarantee, from the finite presentation of the groups 
from which they arise, that $2\dim M \le \rank A$. Thus we also have 
the first requirement for \hol.

This section is largely devoted to a proof of the following result:

\begin{thm} \label{main} Let $M$ be a \hol\  $FA$-module. Then there 
is a subgroup  of finite index in $A$ having the form 
$$ 
A_1\oplus\dots\oplus A_t$$ so that 
\begin{enumerate}
   \item if 
$i\neq j$ then $FA_i$ commutes with $FA_j$;
\item the centre of 
$FA_i$ is $F$;
\item the image of the 2-cocycle used to define each crossed product
$FA_i$ has infinite cyclic image in the multiplicative group of $F$. 
\item any two such images of 2-cocycles are not  commensurable.
\end{enumerate}
Further, if we consider $M$ as 
$FA_i$-module then there exist  $FA_i$-submodules which are \hol.
\end{thm}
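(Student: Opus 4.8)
The plan is to decompose $A$ (up to finite index) using the geometry of $\Delta^*(M)$, exploiting the holonomic hypothesis that $\dim M = \tfrac12\rank A$, and then to use the local cone / trailing coefficient machinery of Sections 2--3 together with the results of Brookes quoted in Proposition \ref{cjbb} to analyse the pieces.

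First I would examine the carrier spaces of $\Delta^*(M)$. By Theorem 4.4 of [BG2] (as recalled at the start of Section 4), $\Delta^*(M)$ is a finite union of $m$-dimensional convex polyhedra with $m = \dim M = \tfrac12\rank A$, and each carrier space is the kernel of a projection $\pi_B$ for an isolated carrier space subgroup $B$. By part (2) of Proposition \ref{cjbb}, each such $B$ has a finite-index subgroup on which $FB$ is commutative; since $M$ is holonomic, condition (2) of the holonomic definition forces $M$ to be $FB$-torsion-free on that subgroup, and $\rank B = n - m = m = \tfrac12 \rank A$. So the carrier space subgroups are precisely the "large commutative-up-to-finite-index" subgroups detected by $\Delta^*(M)$. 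The key structural input I would then invoke is the observation at the end of Section 3: any two carrier spaces of the Delta-set must have non-zero intersection (via Corollary \ref{geneqlc} and Proposition 3.6 of [BG3]). Combined with the dimension count $m + m = n$, this should force any two carrier space subgroups $B, B'$ to satisfy $\rank(B \cap B') \geq 1$, and I would want to push this to understand the lattice of carrier space subgroups.

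Next I would set up the direct sum. Pick a carrier space subgroup $B$. The idea is that $\zeta(A)$ is trivial (the centre of $FA$ is $F$), so $B$ itself cannot be central, yet $FB$ is commutative up to finite index. Passing to a finite-index subgroup (allowed throughout), write $B$ as a direct sum of isolated subgroups $A_1, \dots, A_t$ each of rank one — no, more carefully, the decomposition should come from the 2-cocycle on $FA$: restricting the commutator pairing $A \times A \to F^\times$ (which is well-defined up to the central part), one gets an alternating bilinear form whose kernel is $\zeta(A)$, hence non-degenerate on $A/\zeta(A) = A$. A non-degenerate alternating form over $\mathbb{Z}$ admits, up to finite index, a symplectic-type basis, but here the form has values in a multiplicative group rather than $\mathbb{Z}$, so I would decompose $A$ into $A_1 \oplus \cdots \oplus A_t$ where the form restricted to each $A_i$ has infinite cyclic image in $F^\times$ and distinct $A_i$ pair trivially (giving (1), (3)) and the images on different blocks are non-commensurable (giving (4)) — this last is the genuinely delicate point. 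One then checks $\zeta(A_i)$ is trivial, i.e. the centre of $FA_i$ is $F$ (giving (2)), since each block has non-degenerate form by construction. The hard part will be arranging the non-commensurability in (4): one must show that if two blocks had commensurable cocycle images they could be amalgamated into a single block, or that commensurability forces a larger commutative subgroup contradicting $\dim M = \tfrac12\rank A$; I expect this to require a careful argument tying the Gelfand--Kirillov dimension of $M$, the global-dimension result of [Brcp], and the geometry of $\Delta^*(M)$ together.

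Finally, for the "Further" clause, I would restrict $M$ to $FA_i$. One needs an $FA_i$-submodule $M_i$ with $\rank A_i = 2\dim_{FA_i} M_i$ and $M_i$ torsion-free over every commutative $FB'$, $B' \leq A_i$. The natural candidate is a trailing coefficient module, or an iterated such, cut out by a generic character: choosing a character $\chi$ generic for the complement $\bigoplus_{j \neq i} A_j$ (in the sense defined before Corollary \ref{geneqlc}), the module $TC_\chi(M)$ is locally of finite dimension over $D(\bigoplus_{j\neq i} A_j)$ and, reinterpreting, carries the "holonomic" part of $M$ supported on $A_i$; Lemma \ref{firstineq} and Proposition \ref{firsteq} give the dimension bookkeeping $\rank\chi + \dim TC_\chi(M) = \dim M$, which after iterating over the complementary blocks should leave exactly a half-dimensional module over $FA_i$. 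Torsion-freeness over commutative subrings of $FA_i$ would follow from part (3) of Proposition \ref{cjbb} once one checks the relevant trailing coefficient module is impervious — itself a consequence of genericity and the fact that carrier spaces of $\Delta^*(M)$ intersect. The main obstacle throughout is keeping the "up to finite index" reductions compatible across all $t$ blocks simultaneously while preserving both the direct sum decomposition and the dimension equalities.
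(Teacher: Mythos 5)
Your high-level plan has the right shape: translate the multiplicative structure of $\bar A$ into an alternating bilinear map on $V = A \otimes \mathbb{Q}$, use the geometry of the carrier spaces of $\Delta^*(M)$ as the supply of half-dimensional abelian subspaces, and produce a block decomposition from this. You also correctly locate the key geometric inputs (carrier space subgroups have rank $m = \tfrac12\rank A$ and are commutative up to finite index via Proposition~\ref{cjbb}(2); any carrier space meets some other carrier space nontrivially via Corollary~\ref{geneqlc} and the genericity analysis; any commutative subgroup misses some carrier space subgroup). These are precisely Lemmas~\ref{largecentre}, \ref{trivinter} and \ref{nontrivinter} in the paper.

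However there is a genuine gap at the crux of the argument. You write that ``a non-degenerate alternating form over $\mathbb{Z}$ admits, up to finite index, a symplectic-type basis, but here the form has values in a multiplicative group rather than $\mathbb{Z}$, so I would decompose $A$ into $A_1 \oplus \cdots \oplus A_t$ \ldots'' --- this step does not follow. An alternating bilinear map $\phi\colon V\times V\to W$ with $\dim W > 1$ need not admit a decomposition of the kind you want (what the paper calls a \emph{symplectic base}), even when $\zeta(V)$ is trivial. Conditions (3) and (4) in the definition of a symplectic base --- that each $(V_i,V_i)$ is one-dimensional and these lines are pairwise distinct --- do not come for free from nondegeneracy. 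The paper's actual mechanism is the notion of \emph{ample abelian subspaces} and Proposition~\ref{ample}, which shows that ampleness forces the existence of a symplectic base; ampleness in turn is extracted from the holonomic hypothesis through exactly the carrier-space facts you cite. You gesture at the right ingredients but do not close this loop, and in particular your suggestion that ``commensurability forces a larger commutative subgroup contradicting $\dim M = \tfrac12\rank A$'' does not by itself produce the decomposition; the amalgamation argument must be organised around the dichotomy of Cases 1--3 in the proof of Proposition~\ref{absubsp} and the inductive argument in Proposition~\ref{ample}, neither of which is sketched. Without this, you do not have a proof of parts (1)--(4).

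For the ``Further'' clause you take a route --- trailing coefficient modules cut out by generic characters --- that is genuinely different from the paper, which instead takes a cyclic critical $FA_i$-submodule $N$ of minimal dimension, applies Theorem~5.5 of \cite{jrjgcjbb2} to identify $\Delta^*(N)$ with $(\pi_i(\Delta^*(M)))^*$, and then uses Proposition~\ref{absubsp} to compute the dimensions of the images of carrier spaces under $\pi_i^*$. Your trailing-coefficient route faces a complication you should address: $TC_\chi(M)$ is a module over $FB$ with $B = \ker\chi$, and for $\chi$ generic for the complement of $A_i$ this kernel contains $A_i$ but is generally larger, so an extra restriction or iteration step is needed before you have an honest $FA_i$-module, and you would still have to verify the holonomic inequality after all the bookkeeping. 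The paper's restriction approach is cleaner and avoids this. If you do want to run the trailing-coefficient version, Proposition~\ref{firsteq} gives the dimension identity you need at each stage, but you would also need to re-establish the torsion-freeness condition (2) of the holonomic definition for the iterated trailing coefficient module, which is not automatic.
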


We shall henceforth suppose that $FA$ has a \hol\  module $M$. We 
begin with some of the immediate consequences of this. We shall 
denote by $m$ the dimension of $M$.

\begin{lemma} \label{largecentre} Let $B$ be a subgroup of $A$. 
Suppose that $B$ contains a subgroup $C$
of rank $m$ with $FC$ commutative and suppose also that
$$\rank B+\rank \zeta(B)\ge 2m.$$
Then equality holds above. Also, if we denote by $F_1$ the field of fractions of $F\zeta(B)$  then there is a crossed product $F_1(B/\zeta(B))$,  a localisation of $FB$,  which has a \hol\  module. 

\end{lemma}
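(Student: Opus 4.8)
Here is my plan for proving Lemma~\ref{largecentre}.

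The plan is to exploit the holonomicity inequalities from Proposition~\ref{cjbb} together with the localisation machinery, working inside $FB$. First I would set $Z=\zeta(B)$, $F_1=\operatorname{Frac}(FZ)$, and form the localisation $F_1(B/Z):=(FB)\otimes_{FZ}F_1$; since $FZ$ is central in $FB$ and a commutative domain, $F_1(B/Z)$ is a crossed product of the field $F_1$ with the free abelian group $B/Z$, and it is a (central) localisation of $FB$, so for any $FB$-module $P$ the localisation $P\otimes_{FZ}F_1$ is an $F_1(B/Z)$-module, non-zero provided $P$ is $FZ$-torsion-free. The natural candidate is $P=M|_{FB}$ (or a suitable $FB$-submodule of it); by part~(2) of the definition of \hol\ it is $FC$-torsion-free for every commutative $FC$, in particular $FZ$-torsion-free, so $P\otimes_{FZ}F_1\neq 0$.

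Next I would pin down the dimensions. By part~(1) of Proposition~\ref{cjbb}, $\dim_{FB}(M|_{FB})=\rank B-\rank B'$ for some $B'\le B$ with $FB'$ commutative, so $\dim_{FB}(M|_{FB})\le \rank B-\rank Z$ (since $Z\le B'$ would be needed — more carefully, $\rank B'\ge \rank Z$ is not automatic, so instead I would argue $\dim_{FB}(M|_{FB})\ge\dim_{FA}M=m$ because $M$ already contains a torsion-free $FD$-submodule of rank $m$ for some $D\le A$, and rank of $D\cap B$ is controlled — this is the delicate point, see below). Granting $\dim_{FB}(M|_{FB})=m$ and that a central localisation at a rank-$\rank Z$ commutative subgroup drops Gelfand--Kirillov dimension by exactly $\rank Z$, we get $\dim_{F_1(B/Z)}(P\otimes_{FZ}F_1)=m-\rank Z$. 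The hypothesis $\rank B+\rank Z\ge 2m$ rearranges to $\rank(B/Z)=\rank B-\rank Z\ge 2(m-\rank Z)$, while part~(1) of Proposition~\ref{cjbb} applied to $F_1(B/Z)$ forces $\dim\ge \tfrac12\rank(B/Z)$ unless there is a positive-rank commutative subgroup — but any commutative subgroup of $B/Z$ lifts to a subgroup $E$ of $B$ with $\zeta(E)$ strictly containing $Z\cap E$, and maximality of $\zeta(B)$ should rule this out after passing to finite index, giving $F_1$ central in $F_1(B/Z)$ and hence $\dim\ge\tfrac12\rank(B/Z)\ge m-\rank Z$. Comparing the two dimension computations forces $\rank(B/Z)=2(m-\rank Z)$, i.e.\ equality $\rank B+\rank Z=2m$, and simultaneously shows $P\otimes_{FZ}F_1$ realises the lower bound, so it is \hol\ over $F_1(B/Z)$ (torsion-freeness over commutative sub-crossed-products being inherited from that of $P$ under the flat central localisation).

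The main obstacle I anticipate is the bookkeeping that $\dim_{FB}(M|_{FB})=m$ exactly — the inequality $\ge$ requires knowing that the rank-$m$ torsion-free submodule of $M$ over some $FD$ survives restriction to $B$ with enough rank, which uses that $C\le B$ already has rank $m$ and $FC$ commutative so $M$ has a torsion-free $FC$-submodule of rank $m$ inside $M|_{FB}$, giving $\dim_{FB}(M|_{FB})\ge m$ for free; the reverse inequality $\le m$ comes from $\dim_{FB}(M|_{FB})\le\dim_{FA}M=m$ by monotonicity of GK-dimension under restriction along $FB\hookrightarrow FA$ when $M$ is finitely generated — this needs $M|_{FB}$ finitely generated, which it is since $B$ has finite index in $A$ after we first pass to such a situation, or by a Noetherianity argument. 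The second delicate point is the claim that the localisation lowers dimension by precisely $\rank Z$ and that $F_1$ is the full centre of $F_1(B/Z)$ up to finite index; both follow from Proposition~\ref{cjbb}(1) applied inside $B/Z$ together with the maximality defining $\zeta(B)$, but making the finite-index adjustments consistent with the statement (which already says ``equality holds above'') requires care. Everything else — that $F_1(B/Z)$ is a crossed product with central $F_1$, that $P\otimes_{FZ}F_1$ is non-zero and finitely generated, and that condition~(2) of \hol\ passes to the localisation — is routine.
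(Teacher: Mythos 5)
Your overall plan matches the paper's: localise a finitely generated $FB$-submodule $W$ of $M$ at $F\zeta(B)$ to get a module $W_1$ over the crossed product $F_1(B/\zeta(B))$, verify $\dim_{FB}W=m$, show $W_1$ inherits the torsion-freeness condition, and compare dimensions using Proposition~\ref{cjbb}(1). But the step where you try to force $\dim W_1 \ge \tfrac12\rank(B/Z)$ is wrong as written. You claim that Proposition~\ref{cjbb}(1) yields this inequality ``unless there is a positive-rank commutative subgroup'' of $B/Z$, and that maximality of $\zeta(B)$ rules such subgroups out. Neither assertion is correct. Proposition~\ref{cjbb}(1) simply says $\dim W_1 = \rank(B/\zeta(B)) - \rank(C_2/\zeta(B))$ for \emph{some} subgroup $C_2/\zeta(B)$ with $F_1(C_2/\zeta(B))$ commutative; it gives no inequality on its own. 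And maximality of $\zeta(B)$ only forces the centre of $B/\zeta(B)$ to be trivial (no elements commuting with everything), not the absence of abelian subspaces. Indeed the setup guarantees large abelian subgroups: $C\zeta(B)/\zeta(B)$ is abelian of rank $m-\rank\zeta(B)$, which is exactly $\tfrac12\rank(B/\zeta(B))$ once the equality is proved. If your ``ruling out'' were valid, Proposition~\ref{cjbb}(1) would give $\dim W_1 = \rank(B/\zeta(B))$, contradicting your own computation $\dim W_1 = m - \rank\zeta(B)$.

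The correct way to close the dimension count — which is the paper's route — is to note that $F_1(C_2/\zeta(B))$ commutative forces $FC_2$ commutative (you state this equivalence yourself when transferring condition~(2)), so by the holonomicity of $M$ the module $M$ is $FC_2$-torsion-free, whence $\rank C_2 \le \dim M = m$. Substituting gives $\dim W_1 = \rank B - \rank C_2 \ge \rank B - m$. Combined with $\dim W_1 = m - \rank\zeta(B)$ this yields $\rank B + \rank\zeta(B) \le 2m$, and the hypothesis supplies the reverse inequality, giving equality and $\dim W_1 = \tfrac12\rank(B/\zeta(B))$. Everything else in your outline (the localisation being a crossed product with $F_1$ central, $W_1$ non-zero, the restriction-of-scalars computation $\dim_{FB}W = m$, and the transfer of condition~(2)) is sound and parallels the paper; you do not need $F_1$ to be the \emph{whole} centre, only central, so that worry is moot.
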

\begin{proof} Let $W$ denote a non-zero finitely generated 
$FB$-submodule of $M$. Clearly, $\dim_B(W)\le \dim_A M=m$. But
$C\le B$ and, by supposition, $M$ is $FC$-torsion-free. Thus $\dim_B 
W\ge m$ and so $\dim_B W=m$.

Because $F\zeta(B)$ is commutative, $M$ and so $W$, is torsion-free 
as $F\zeta(B)$-module. Set $W_1=W\otimes_{F\zeta(B)} F_1$. This is a 
module for $FB\otimes_{F\zeta(B)} F_1$ and it is straightforward to 
verify that $FB\otimes_{F\zeta(B)} F_1$ is isomorphic to a cross 
product of $F_1$ by $B/\zeta(B)$ which we shall denote by 
$F_1(B/\zeta(B))$. We claim that $W_1$ is \hol.

If $C_1$ is a subgroup of $B$ containing $\zeta(B)$ then 
$F_1(C_1/\zeta(B))$ is commutative if and only if $FC_1$ is also 
commutative. Also $W$ is torsion-free as $FC_1$-module if and only 
if  $W_1$ is torsion-free as $F_1(C_1/\zeta(B))$-module. 

Thus if 
$C_2$ is a subgroup of $B/\zeta(B)$ with $F_1C_2$ commutative then 
$W_1$ is $F_1C_2$-torsion-free and the second condition for 
`\hol' is satisfied by $W_1$.

It also follows that 
\begin{equation}
\label{one}
\dim 
W_1=m-\rank(\zeta(B)).
\end{equation}By (1) of Proposition 
\ref{cjbb}, there is a subgroup $C_2$ of $B$ with $F_1(C_2/\zeta(B))$ 
commutative and  
$$\dim W_1 
=\rank(B/\zeta(B))-\rank(C_2/\zeta(B)).$$
Thus $FC_2$ is commutative 
and so $\rank(C_2)\le m$. Thus 
\begin{equation}
\label{two}
\dim W_1 
=\rank(B)-\rank(C_2)\ge \rank(B)-m.
\end{equation}
Thus, combining 
(\ref{one}) and (\ref{two}), we have 
\begin{equation}
\label{three}
\rank(B)-m\le 
m-\rank(\zeta(B)).
\end{equation}
It follows that 
$\rank(B)+\rank(\zeta(B)\le 2m$. But we have assumed the reverse 
inequality and so the inequalities in (\ref{two}) and (\ref{three}) 
are, in fact, equalities. In particular, 
$$\dim(W_1)=\rank(B)-m= 
(1/2)(\rank(B)-\rank(\zeta(B)))=(1/2)\rank(B/\zeta(B)).$$
Thus $W_1$ 
is a \hol\ module.
\end{proof}

\begin{lemma}\label{trivinter}Let $C$ be a  subgroup of $A$ with $FC$ 
commutative. Then $C$ meets some carrier space subgroup of $M$ trivially.
\end{lemma}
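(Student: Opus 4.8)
The plan is to argue by contradiction: suppose $C$ is a subgroup of $A$ with $FC$ commutative that meets \emph{every} carrier space subgroup of $M$ non-trivially, and derive a violation of the holonomicity hypothesis. The natural object to produce is a subgroup $B$ of $A$ that is ``too large and too central'': one satisfying the hypothesis $\rank B + \rank\zeta(B) \ge 2m$ of Lemma \ref{largecentre} while also containing a rank-$m$ commutative piece, so that we obtain a \hol\ module over a localisation and then squeeze out a contradiction from the fact that $FC$ is commutative of rank at most $m$.

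First I would recall that $\dim M = m$ and $\rank A = 2m$, and that by Theorem 4.4 of [BG2] the carrier spaces of $\Delta^*(M)$ are finitely many rationally defined $m$-dimensional subspaces, each the kernel of $\pi_B$ for a (chosen isolated) carrier space subgroup $B$, which by (2) of Proposition \ref{cjbb} has a finite-index subgroup $B'$ with $FB'$ commutative. The key tool for passing between ``$C$ meets carrier space subgroups'' and genericity is Corollary \ref{geneqlc} together with the remark following it: a point of $\Delta^*(M)$ non-generic for a carrier space must lie in at least two carrier spaces. I would pick a character $\chi$ in a carrier space — ideally a generic one, or at least one of maximal rank $m$ as in the proof of Theorem 3 of \cite{Brcp} quoted in Proposition \ref{cjbb} — so that its kernel \emph{is} (a finite-index overgroup of) the carrier space subgroup $B$; then $FB$ has a finite-index commutative subgroup, $\rank B = m$, and since $\zeta(A)$ need not be trivial in general but $F$ is the centre of $FA$ forces $\zeta(A)=1$, one has $\rank\zeta(B)$ controlled.

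The crux of the argument is the following dichotomy. If $C$ met only one carrier space subgroup $B$ non-trivially, one could replace $\chi$ by a character generic for the span of $B$ and use Corollary \ref{geneqlc} to push $LC_\chi(\Delta^*(M))$ into that span; but a single carrier space subgroup $B$ of rank exactly $m$ together with a commutative $C$ of rank $\le m$ meeting it, combined with $\rank A = 2m$, does not yet force a contradiction — so the real work is to exploit that $C$ meets \emph{all} carrier spaces. I would intersect: form the subgroup $C' = C \cap B$ for a carrier space subgroup $B$, note $C'$ has positive rank and $FC'$ is commutative, and then build an enlargement by adjoining to $B$ a complement inside which $C$ also interacts — the point being to manufacture a subgroup $B^\sharp$ with $\rank B^\sharp + \rank\zeta(B^\sharp)\ge 2m$. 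Feeding $B^\sharp$ into Lemma \ref{largecentre} gives a \hol\ module $W_1$ over $F_1(B^\sharp/\zeta(B^\sharp))$ with $\dim W_1 = \tfrac12\rank(B^\sharp/\zeta(B^\sharp))$; tracking the image of $C$ in $B^\sharp/\zeta(B^\sharp)$, which is still commutative, one finds that $W_1$ fails to be $F_1(\bar C)$-torsion-free or else $\bar C$ forces $\dim W_1$ strictly below half, contradicting holonomicity of $W_1$.

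The main obstacle I anticipate is the bookkeeping in constructing the enlarged subgroup $B^\sharp$ and verifying the centrality inequality $\rank B^\sharp + \rank\zeta(B^\sharp) \ge 2m$ cleanly — this is where the hypothesis that $C$ meets \emph{all} carrier space subgroups (rather than just one) has to be used essentially, presumably via a counting/intersection argument on the finitely many $m$-dimensional carrier spaces inside the $2m$-dimensional space $A^*$, showing that a subspace meeting every one of them non-trivially is forced to sit inside the perpendicular (or annihilator) of some large central subgroup. Once that subgroup is in hand, the contradiction via Lemma \ref{largecentre} and Proposition \ref{cjbb}(1),(3) should be routine.
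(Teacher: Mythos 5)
Your proposal does not match the paper's proof and, as you yourself flag, it has a genuine gap at its centre.

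The paper's argument is much more direct and does not argue by contradiction at all. Since $M$ is \hol\ and $FC$ is commutative, $M$ is $FC$-torsion-free, and so is any nonzero submodule $M_1$ (take $M_1$ critical). By Theorem 5.5 of \cite{jrjgcjbb2}, $(\pi_C(\Delta^*(M_1)))^* = \Delta^*(N)$ for a cyclic critical $FC$-submodule $N$ of minimal dimension in $M_1$; torsion-freeness over $FC$ forces $\dim N = \rank C$, hence $\pi_C(\Delta^*(M_1)) = C^*$. Now $\Delta^*(M_1)$ is a finite union of $m$-dimensional convex polyhedra, and since a Euclidean space is not a finite union of proper subspaces, at least one of these polyhedra must project onto all of $C^*$. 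Its carrier space $V$ therefore satisfies $\ker\pi_C + V = A^*$, and writing $V = \ker\pi_B$ for the corresponding carrier space subgroup $B$, duality gives $C \cap B = \{1\}$. The whole force of the argument is that the projection to $C^*$ is surjective and is a finite union of polyhedra; no contradiction and no construction of an auxiliary subgroup is needed.

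Your route, by contrast, hinges entirely on producing a subgroup $B^\sharp$ with $\rank B^\sharp + \rank\zeta(B^\sharp)\ge 2m$ from the assumption that $C$ meets every carrier space subgroup non-trivially, so that Lemma~\ref{largecentre} can be invoked. You concede that you do not know how to construct $B^\sharp$ or verify this inequality, and this is precisely where the proposal breaks down: meeting each carrier space subgroup in positive rank gives no control whatsoever on $\zeta(B^\sharp)$ for whichever $B^\sharp$ you assemble from $C$ and the carrier space subgroups — nontrivial intersection is a rank condition, not a commutation condition, and there is no mechanism in your sketch that manufactures central elements. Unless you can supply that construction, the contradiction never materialises. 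I would recommend abandoning this route and instead using the projection argument: the combination of (i) $FC$-torsion-freeness, (ii) Theorem 5.5 of \cite{jrjgcjbb2}, and (iii) the elementary fact about finite unions of proper subspaces gives the conclusion in a few lines.
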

\begin{proof}
By assumption $M$ is torsion-free over $FC$. Thus any non-zero 
submodule, $M_1$ say, of $M$ will also be torsion-free over
$FC$ and so will again have dimension $m$. In particular, we can take 
$M_1$ to be critical.

By Theorem 5.5 of [BG2], $(\pi_C(\Delta^*(M_1)))^*=\Delta^*(N)$ for 
some cyclic critical $FC$-submodule $N$ of
minimal dimension in $M$. Because $M$ is $FC$-torsion-free, $N$ must 
have dimension $\rank C$; that
is, $(\pi_C(\Delta^*(M_1)))^* =C^*$. Thus $\pi_C(\Delta^*(M_1)) = 
C^*$. But then, because a Euclidean space cannot be the union of 
finitely many proper subspaces, at least one of the
$m$-dimensional convex polyhedra contained in $\Delta^*(M_1)$ must 
map onto $C^*$. That is, $\ker \pi_C+V=A^*$ for some carrier
space $V$. Because $V$ is a rational subspace of $A^*$, we have 
$V=\ker \pi_B$ for some (carrier space) subgroup $B$ of $A$ and so
$C \cap B= \{1\}$.
\end{proof}

\begin{lemma}\label{nontrivinter}If $\rank A\ge 4$, then each carrier 
space of $M$ has non-trivial intersection with some
other carrier space.
\end{lemma}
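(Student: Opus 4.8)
The plan is to invoke Corollary \ref{geneqlc}, which tells us that a character $\chi$ lying in a carrier space $V$ but non-generic for $V$ must have $LC_\chi(\Delta^*(M))\not\subseteq V$, hence (since the local cone of a point of $\Delta^*(M)$ is a union of the local cones coming from the carrier spaces through that point) $\chi$ must lie in a second carrier space. So it suffices to produce, in each carrier space $V$ with carrier space subgroup $B$, a non-generic character. The remark immediately after Corollary \ref{geneqlc} says that, under suitable technical hypotheses, a carrier space contains non-generic points; the work is to check that these hypotheses are met when $\rank A\ge 4$, and the natural route is a dimension count using the holonomic hypothesis and Proposition \ref{cjbb}.

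The key steps, in order. First I would fix a carrier space $V=\ker\pi_B$ with $B$ an (isolated) carrier space subgroup; by (2) of Proposition \ref{cjbb}, $B$ has a finite-index subgroup $C$ with $FC$ commutative, and by (1) of Proposition \ref{cjbb} together with the holonomic condition $\rank A=2m$ one computes $\rank B=m$, so $\rank C=m$. Since the carrier space has dimension $m$, for a generic $\chi\in V$ the trailing coefficient module $TC_\chi(M)$ would be locally of finite dimension as a $DC$-module, i.e. $\dim_{DC}TC_\chi(M)=0$, whereas Proposition \ref{firsteq} gives $\dim TC_\chi(M)=\dim M-\rank\chi=m-\rank\chi$. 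So every character in $V$ of rank strictly less than $m$ would have to fail genericness unless $TC_\chi(M)$ shrinks in rank when pushed to $DC$; the point is that the supporting subspace of $\Delta^*(TC_\chi(M))$ inside $B^*$ must meet $\ker\pi^B_{B\cap C}$ nontrivially for $\chi$ to be non-generic, and I would show this is forced when $m\ge 2$. Concretely: $\rank(B/(B\cap C))=\rank B-\rank C=0$ is too crude, so instead I would take $\chi$ of maximal rank $m$ in $V$ (as in the proof of Proposition \ref{cjbb}, its kernel is exactly $B$), then perturb to a lower-rank $\chi'\in V$ and compare $\Delta^*(TC_{\chi'}(M))\subseteq B^*$ against $\pi_B(V)$; the holonomic bound forces the former to have dimension $m-\rank\chi'$ which, for $\rank\chi'$ small and $m\ge 2$, exceeds $\dim\pi_B(V)$ appropriately so that local-finite-dimensionality as a $DC$-module fails.

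Assembling: having produced $\chi\in V$ non-generic for $V$, Corollary \ref{geneqlc} gives $LC_\chi(\Delta^*(M))\not\subseteq V$; since $\chi\in\Delta^*(M)$, its local cone is a finite union of cones, each supported in a carrier space through $\chi$, and so at least one carrier space $V'\ne V$ contains $\chi$ — indeed $\ker\pi_{V'}\supseteq$ part of $LC_\chi(\Delta^*(M))$ not contained in $V$. The hypothesis $\rank A\ge 4$ enters precisely as $m=\tfrac12\rank A\ge 2$, which is what makes the dimension count go through; with $m\le 1$ (i.e. $\rank A\le 3$, in fact $\rank A=2$) the carrier spaces can be isolated and the conclusion genuinely fails.

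The main obstacle I anticipate is the bookkeeping in the dimension count — keeping straight the ranks of $B$, $C$, $\chi$, $\chi'$ and the dimensions of $\Delta^*(TC_\chi(M))$, $\pi_B(V)$, and $\ker\pi^B_{B\cap C}$ — and, relatedly, pinning down exactly which "technical conditions" of Proposition 3.6 of \cite{jrjgcjbb3} are needed and verifying them from the holonomic hypothesis and parts (1)–(3) of Proposition \ref{cjbb}. That verification, rather than the final deduction via Corollary \ref{geneqlc}, is where the real content lies.
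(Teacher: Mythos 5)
Your overall skeleton matches the paper's: reduce to producing a non-zero non-generic character in each carrier space $V$, then invoke Corollary \ref{geneqlc} to get $LC_\chi(\Delta^*(M))\not\subseteq V$ and hence a second carrier space through $\chi$. That final deduction is correct. But the middle step — actually producing non-generic points — is where your proof has a genuine gap, and the attempted dimension count you sketch to close it doesn't work as written. You set $V=\ker\pi_B$ and then speak of comparing $\Delta^*(TC_{\chi'}(M))\subseteq B^*$ against ``$\pi_B(V)$'': but $\pi_B(V)=\pi_B(\ker\pi_B)=\{0\}$, so this comparison is vacuous. You have also conflated the carrier space subgroup (call it $C$, as the paper does) with its finite-index commutative subgroup $C_1$ from Proposition \ref{cjbb}(2): local finite-dimensionality in the definition of genericness is as a $DC$-module, i.e.\ relative to the carrier space subgroup itself, and your discussion slides between the two. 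Your instinct that ``$\rank(B/(B\cap C))=0$ is too crude'' is a symptom of this confusion rather than a route around it.

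The paper closes this gap cleanly, without any dimension count: by Proposition \ref{cjbb}(2) the carrier space subgroup $C$ has a finite-index subgroup $C_1$ with $FC_1$ commutative, so the holonomic hypothesis (via Proposition \ref{cjbb}(3)) makes $M$ torsion-free over $FC_1$; and $V$ has codimension $m=\tfrac12\rank A\ge 2$. These two facts are exactly the hypotheses of Corollary 3.7 of \cite{jrjgcjbb3}, which then delivers non-zero non-generic points of $V$ directly. You correctly identified that the remark after Corollary \ref{geneqlc} points to \cite{jrjgcjbb3} and that ``the real content lies'' in verifying its hypotheses, but you left that verification as an anticipated obstacle rather than carrying it out; and your substitute dimension argument, as explained above, is not a workable replacement. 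In short: right structure, but the load-bearing step is missing and the attempted patch is defective.
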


\begin{proof} Let $V$ be a carrier space of $M$ and suppose that $C$ 
is the corresponding carrier space subgroup. By
Lemma \ref{cjbb}, $FC_1$ is commutative for some subgroup $C_1$ of 
finite index in $C$. Thus $M$ is torsion-free over $FC_1$; also the 
co-dimension of $V$ is $m$ which is at least 2. Thus, by Corollary 
3.7 of [BG3], $V$ contains non-zero points which are non-generic for 
$V$. It follows from Corollary \ref{geneqlc} that there
are points $\chi\in V$ such that $LC_{\chi}(\Delta^*(M))\not\subseteq 
V$. But then $\chi$ must lie in some other carrier
space of $V$ and so $V$ intersects some other carrier space, as required.
\end{proof}

\subsection{Alternating bilinear maps on vector spaces}
Our aim is 
to investigate the nature of the group generated by $\bar{A}$ within 
$DA$. In particular, we have considerable amounts of information 
about the possible subgroups $B$ of $A$ for which $DB$ is 
commutative. The structure is described by the commutator map from 
$A$ to $F$. But rather than work with $A$ and its subgroups, we shall 
work with the divisible hull $A\otimes \mathbb Q$ and its subspaces. 
The next paragraph translates the previous definitions to this 
context.

 Let $V$ and $W$ be finite dimensional vector spaces over a 
field $K$ and let $\phi: V \times V \rightarrow W$ be an alternating 
bilinear map. We shall often abbreviate $\phi(x,y)$ by $(x,y)$. We 
shall use terminology with group-theoretical overtones rather than 
than that derived from the theory of forms on vector spaces. We shall 
say that, if $x,y\in V$ then $x$ {\it centralises} $y$ if $(x,y)=0$ 
and if $S \subseteq V$ then the centraliser $\mathcal C(S)$ is the 
subspace of all those elements of $V$ which centralise each element 
of $S$. The centraliser of $V$ is called the {\it centre} of $V$. We 
shall say that a subspace $U$ of $V$ is {\it abelian} if any two 
elements of $U$ centralise each other.

We now turn to the sort of 
structure we wish to establish for crossed products with a \hol\ 
module.
\begin{defin} A {\rm symplectic base} for $V$ is a decomposition of 
$V$,
$$V=\bigoplus_{i=0}^t V_i,$$
as a direct sum so 
that
\begin{enumerate}
  \item $V_0$ is the centre of $V$;
  \item if 
$i\neq j$ then $(V_i,V_j)=\{0\}$;
  \item $(V_i, V_i)$ has dimension 
1 and $V_i$ has centre $\{0\}$;
  \item if $i\neq j$ then $(V_i, 
V_i)\neq (V_j,V_j)$.
\end{enumerate}
\end{defin} 

Observe that 
$\phi$ restricted to $V_i$ for $i>0$ yields a non-degenerate 
symplectic form on $V_i$ and so the well-known properties of such a 
form hold. In particular, every non-zero element of $V_i$ has 
centraliser, in $V_i$, of co-dimension one. Also, if $A$ is an 
abelian subspace of maximal dimension in $V_i$ with basis 
$\{x_1,\dots,x_m\}$ then $V_i$ has a basis $\{x_1,\dots,x_m, y_1, 
\dots, y_m\}$ where $(y_i,y_j)=0$ for each $i,j$ and $(x_i,y_j)=0$ 
precisely if $i\neq j$.

The definition of a symplectic base is 
designed to ensure a degree of uniqueness.

\begin{lemma} If $V$ has 
a symplectic base (as above) then the subspaces $V_0+V_i$ are 
uniquely determined up to re-arrangement. In particular, if the 
centre is trivial then the subspaces $V_i$ are unique up to 
re-arrangement.
\end{lemma}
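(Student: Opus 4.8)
The plan is to characterise the subspaces $V_0 + V_i$ intrinsically, in terms of the bilinear map $\phi$ alone, so that any re-indexing of a symplectic base must preserve the collection $\{V_0+V_i : 1\le i\le t\}$. The natural invariant to use is the derived map: for a symplectic base $V = \bigoplus_{i=0}^t V_i$, properties (2) and (3) show that the image $\phi(V,V)$ is the direct sum $\bigoplus_{i=1}^t (V_i,V_i)$, a space of dimension $t$, and property (4) says the one-dimensional subspaces $L_i := (V_i,V_i)$ are pairwise distinct. So $\phi(V,V)$ comes equipped with a distinguished set of $t$ lines $L_1,\dots,L_t$. I would first check that these lines are themselves intrinsic: $L_i$ is the set of all values $(x,y)$ with $x,y$ ranging over a single "indecomposable" piece, and more usefully one can recover each $L_i$ as $\phi(U,U)$ for a suitable subspace $U$ — but the cleanest route is to show $V_0+V_i = \{v\in V : \phi(v,V)\subseteq L_i\}$, i.e. $V_0+V_i = \phi(-,V)^{-1}(L_i)$ once we know the $L_i$ are canonical.

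The key steps, in order: (i) Show $\phi(V,V) = \bigoplus_{i=1}^t L_i$ with the $L_i$ distinct lines; this is immediate from the symplectic-base axioms. (ii) Show that for $v \in V$, the set $\phi(v,V)$ is contained in a single line $L_i$ if and only if $v \in V_0 + V_i$, and that otherwise $\phi(v,V)$ spans a subspace meeting at least two of the $L_i$ nontrivially. For this, write $v = v_0 + \sum_{i\ge 1} v_i$ with $v_i \in V_i$; then $\phi(v,w) = \sum_{i\ge 1}\phi(v_i, w_i)$ for $w = \sum w_j$, and since each $\phi(-,-)|_{V_i}$ is non-degenerate, $\phi(v_i, V_i) = L_i$ as soon as $v_i \ne 0$. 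Hence $\phi(v,V) = \bigoplus_{i : v_i \ne 0} L_i$, which lies in a single $L_i$ precisely when at most one $v_i$ is nonzero, i.e. $v \in V_0 + V_i$. (iii) Conclude that $\{V_0 + V_i : 1 \le i \le t\}$ is determined by $\phi$: each is $\{v : \phi(v,V) \subseteq L\}$ for one of the $t$ lines $L$ in the canonically-attached line configuration $\{L_1,\dots,L_t\}$ inside $\phi(V,V)$. Finally, $V_0 = \bigcap_i (V_0+V_i)$ is the centre (intrinsic), so if the centre is trivial then $V_i = V_0 + V_i$ is itself intrinsic; in general the $V_i$ themselves are genuinely non-unique (one can modify $V_i$ by adding central vectors), so only $V_0+V_i$ is claimed.

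The one point needing a little care — the main obstacle — is step (iii)'s assertion that the \emph{set of lines} $\{L_1,\dots,L_t\}$ is independent of the choice of symplectic base, not just the space they span. This follows once step (ii) is in place: given two symplectic bases, step (ii) applied to the first produces the decomposition of $\phi(V,V)$ into lines $L_i$ as the minimal lines $L$ such that $\{v : \phi(v,V)\subseteq L\}$ is nonzero, and this description makes no reference to the base. So the lines are recovered purely from $\phi$, hence so is the collection of preimages $V_0+V_i$, and any two symplectic bases therefore have the same family $\{V_0+V_i\}$ up to re-arrangement. I would present steps (i) and (ii) as the substance of the argument and treat (iii) as a short deduction; no nontrivial computation is required beyond the bilinearity expansion in (ii) and the standard fact that a non-degenerate alternating form $\phi|_{V_i}$ has $\phi(v_i,V_i)$ equal to all of its (one-dimensional) image whenever $v_i\neq0$.
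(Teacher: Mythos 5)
Your proof takes essentially the same route as the paper: both characterise $V_0+V_i$ via the image $\phi(v,V)$ being at most one--dimensional (the paper phrases this as the centraliser of $v$ having co-dimension at most one; you phrase it as $\phi(v,V)$ lying in a single line $L_i$). One small inaccuracy worth flagging: in step (i) you assert that $\phi(V,V)=\bigoplus_{i=1}^t L_i$ is a direct sum of dimension $t$, but the axioms only guarantee the lines $L_i$ are pairwise \emph{distinct}, not linearly independent as a collection; three or more distinct lines can fail to be independent, so $\phi(V,V)=\sum_i L_i$ need not have dimension $t$. Fortunately this is never used: all that step (ii) needs is that any two distinct lines span a $2$--dimensional subspace, so if $v_i\neq 0$ and $v_j\neq 0$ with $i\neq j$ then $\phi(v,V)\supseteq L_i+L_j$ has dimension $\ge 2$, which already gives the desired characterisation. (Also, in step (iii) the description of $L_i$ as a line $L$ with $\{v:\phi(v,V)\subseteq L\}$ nonzero should read ``strictly containing $V_0$,'' since $V_0$ is always in that preimage; an equivalent fix is to characterise the $L_i$ as the lines realised as $\phi(v,V)$ for some $v$.)
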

\begin{proof} This follows immediately 
from the fact that $V_0$ is the centre of $V$ and that `the non-zero 
elements of $\cup_{i>0} (V_0+V_i)$ are precisely the elements of $V$ 
with centraliser of co-dimension at most one'. We prove the latter 
statement. Observe firstly that the elements of $V_0+V_i$ certainly 
have co-dimension at most 1 since elements of $V_0$ have centraliser 
$V$ and elements of $V_i$ with $i\neq 0$ have centraliser in $V_i$ of 
co-dimension 1.

If $v\in V$ then the function $\phi_v: V \rightarrow 
W$ given by $w \to \phi(v,w)$ is a linear map with kernel the 
centraliser of $v$. Thus the co-dimension of the centraliser of $v$ 
is equal to the dimension of the image of $\phi_v$. Let 
$v=\sum_{i=0}^t v_i$ with $v_i \in V_i$ and suppose, for example, 
that $v_1, v_2\neq 0$. Then there exist $v_1'\in V_1$ and $v_2'\in 
V_2$ so that $(v_i, v_i')$ is non-zero for $i=1,2$. Thus $(v,v_1')$ 
and $(v,v_2')$ are non-zero elements of $(V_1,V_1)$ and $(V_2,V_2)$ 
respectively and so are independent. Thus at most one of the $v_i 
(i\ge 1)$ can be non-zero if the centraliser of $v$ has co-dimension 
at most one. The proof is 
complete.
\end{proof}

\begin{prop}\label{absubsp} Suppose that $V$ 
has  a symplectic base $V=\bigoplus_i V_i$ with trivial centre and 
dimension $2m$. Let $U$ be an abelian subspace of dimension  at least 
$m$. Then 
$$U=\bigoplus_{i=1}^t(U\cap V_i).$$
Thus $U$ has 
dimension exactly $m$ and $U\cap V_i$ has dimension one half of the 
dimension of $V_i$.
\end{prop}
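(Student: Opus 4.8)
The plan is to turn the $W$-valued form $\phi$ into ordinary $K$-valued symplectic forms by composing it with well-chosen linear functionals on $W$, and then to read off each $\dim(U\cap V_i)$ from a Lagrangian dimension count. Since everything reduces to choosing functionals on $W$ that avoid finitely many proper subspaces, I would first assume that $K$ is infinite: extending scalars to the algebraic closure leaves $U$ abelian, preserves all dimensions, intersections and the decomposition $V=\bigoplus V_i$, and keeps distinct lines distinct, so it keeps $\bigoplus V_i$ a symplectic base with the same $t$. If $t=1$ the claim is immediate, since an abelian subspace of $V=V_1$ is isotropic for the non-degenerate symplectic form $\phi_1$, hence of dimension at most $\tfrac12\dim V_1=m$, hence of dimension exactly $m$, and it lies in $V_1$. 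So assume $t\ge 2$, whence $\dim W\ge 2$. Write $\dim V_i=2m_i$, so $m_1+\dots+m_t=m$, set $\ell_i=(V_i,V_i)$, and recall from the definition of a symplectic base that the $\ell_i$ are distinct $1$-dimensional subspaces of $W$, that $(V_i,V_j)=0$ for $i\neq j$, and that $\phi$ restricts to a non-degenerate symplectic form $\phi_i$ on each $V_i$; I may also assume $W=\ell_1+\dots+\ell_t$. Write $W^*=\Hom(W,K)$ for the dual.

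First I would show $\dim U=m$. Each $\{f\in W^*: f(\ell_i)=0\}$ is a proper subspace of $W^*$, so, $K$ being infinite, these finitely many subspaces do not cover $W^*$ and there is $f_0\in W^*$ with $f_0(\ell_i)\neq 0$ for every $i$. Then $\psi_0:=f_0\circ\phi$ is an alternating $K$-valued form on $V$ with $\psi_0(V_i,V_j)=0$ for $i\neq j$ and with $\psi_0|_{V_i}=f_0\circ\phi_i$ non-degenerate (a nonzero functional on the line $\ell_i$ composed with the non-degenerate $\phi_i$); hence $\psi_0$ is a non-degenerate symplectic form on $V$. Since $U$ is abelian, $\psi_0(U,U)=f_0(\phi(U,U))=0$, so $U$ is $\psi_0$-isotropic and $\dim U\le\tfrac12\dim V=m$; together with the hypothesis $\dim U\ge m$ this gives $\dim U=m$.

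Next I would compute $\dim(U\cap V_j)$ for a fixed $j$. For $i\neq j$ we have $\ell_i\neq\ell_j$, so not every functional vanishing on $\ell_j$ vanishes on $\ell_i$, and $\{f\in W^*: f(\ell_j)=f(\ell_i)=0\}$ is a proper subspace of $\{f\in W^*: f(\ell_j)=0\}$. The latter is nonzero (as $\dim W\ge 2$) and $K$ is infinite, so these finitely many proper subspaces do not cover it, and there is $f_j\in W^*$ with $f_j(\ell_j)=0$ and $f_j(\ell_i)\neq 0$ for all $i\neq j$. Put $\psi_j:=f_j\circ\phi$; as before $\psi_j(V_i,V_{i'})=0$ for $i\neq i'$, $\psi_j|_{V_i}$ is non-degenerate for $i\neq j$, and $\psi_j|_{V_j}=0$, so $\psi_j$ is the orthogonal direct sum of these pieces, its radical is precisely $V_j$, and it induces a non-degenerate symplectic form on $V/V_j$, of dimension $2(m-m_j)$. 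Since $U$ is $\psi_j$-isotropic, its image $U/(U\cap V_j)$ in $V/V_j$ is isotropic, hence of dimension at most $m-m_j$; as $\dim U=m$, this forces $\dim(U\cap V_j)\ge m_j$. On the other hand $U\cap V_j$ is an isotropic subspace of the $2m_j$-dimensional symplectic space $(V_j,\phi_j)$, so $\dim(U\cap V_j)\le m_j$. Hence $\dim(U\cap V_j)=m_j=\tfrac12\dim V_j$ for every $j$. The sum $\sum_i(U\cap V_i)$ is direct because the $V_i$ are, so it has dimension $m_1+\dots+m_t=m=\dim U$ and is contained in $U$; therefore $U=\bigoplus_i(U\cap V_i)$, which is the assertion.

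The step that needs care, and where the distinctness clause~(4) of the definition of a symplectic base is genuinely used, is the construction of the functional $f_j$ killing $\ell_j$ but none of the other $\ell_i$: this is exactly what forces the radical of $\psi_j$ to equal $V_j$ and so converts the $W$-valued condition into a clean Lagrangian count. If two of the $\ell_i$ coincided, no such $f_j$ would exist, and in fact the conclusion itself would fail. Everything else is routine symplectic linear algebra.
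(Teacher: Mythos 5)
Your proof is correct, but it takes a genuinely different route from the paper's. The paper proves Proposition~\ref{absubsp} by induction on $\dim V$, with explicit casework: Case~1 handles the situation where $U$ contains a nonzero element of some $V_i$ (split off a hyperbolic plane, project, induct); Case~2 reduces to Case~1 when some $\pi_i(U)$ is proper in $V_i$; Case~3 uses clause~(4) of the definition to rule out $U$ containing a mixed element $v_i+v_j$ with both components nonzero; and the general case manufactures, via an auxiliary abelian subspace $Y=(\id-\pi_1)(X)$, an element falling under Cases~1 or~3. Your approach instead linearises the $W$-valued form by composing with generic functionals $f\in W^*$: a functional $f_0$ not vanishing on any $\ell_i$ turns $\phi$ into a genuine non-degenerate symplectic form $\psi_0$ and pins down $\dim U=m$ by the Lagrangian bound, while for each $j$ a functional $f_j$ killing $\ell_j$ alone gives $\psi_j$ with radical exactly $V_j$, and the two isotropy counts (in $V/V_j$ and in $V_j$) squeeze $\dim(U\cap V_j)$ to $m_j$. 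You correctly identify that clause~(4) is what makes the $f_j$ exist, playing the same role that Case~3 plays in the paper's argument. Your reduction to the infinite-field case by scalar extension is sound and necessary for the covering argument, and the descent back to $K$ via dimension counting is fine; the paper's inductive proof works directly over any field without this step, and also dovetails with the companion Proposition~\ref{ample}, which reuses the same case analysis. On balance your argument is shorter and more conceptual, at the modest cost of the scalar extension, while the paper's is more elementary and self-contained within the section's machinery.
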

Observe that it is easy to deduce a 
similar statement without the assumption of trivial centre by 
applying the proposition to $V/V_0$.

\begin{proof} We prove by 
induction on the dimension of $V$.

\noindent {\it Case 1: suppose that 
$U$ contains a non-zero element of some $V_i$. }

Let us suppose that 
$0\neq v \in V_1\cap U$. Then there exists some $v'\in V_1$ so that 
$(v,v')\neq 0$. Set $X=\langle v,v' \rangle$. Then we can find a 
complement $X_1$ of $X$ in $V_1$ so that $(X,X_1)=\{0\}$. It is 
easily checked that $X_1, V_2, \dots, V_t$ forms a symplectic base 
for the sum $V'=X_1+ V_2+ \dots+ V_t$. Further, $V=X\oplus V'$; let 
$\pi$ be the projection of $V$ onto $X$. 

We claim that $\pi(U)$ has 
dimension 1. Otherwise, $\pi(U)=X$ and so $v'=\pi(u)$ for some $u\in 
U$. But then $(v,u)=0$ as both $v$ and $u$ are in the abelian 
subspace $U$. However, $(v,u)=(v,\pi(u))$ as $v\in X$ and 
$u-\pi(u)\in V'$. Also $(v, \pi(u))=(v,v')\neq 0$, a 
contradiction.

Thus $\pi(U)$ has dimension 1 and so $U_1=U\cap 
\ker(\pi)$ has dimension at least $m-1$. 
Since $V'=\ker(\pi)$ has 
dimension $2m-2$, we can apply the inductive hypothesis to show that 

$$U_1=(U_1\cap X_1) \oplus (U_1\cap V_2)\oplus\dots\oplus (U_1\cap 
V_t).$$
Also, as $v\notin \ker(\pi)$, $U=\langle v, U_1\rangle$. Thus 

\begin{eqnarray*}U&=& \langle v \rangle\oplus U_1\\&=& \langle v 
\rangle\oplus (U_1\cap X_1) \oplus (U_1\cap V_2)\oplus\dots\oplus 
(U_1\cap V_t)\\
&=&(U\cap V_1) \oplus \dots (U\cap 
V_t).\end{eqnarray*}
Thus the proof is complete in this 
case.

\noindent {\it Case 2: suppose that $\pi_i(U)$ is a proper subspace 
of $V_i$ where  $\pi_i$ is the projection onto $V_i$.}

Let us suppose 
that $\pi_1(U)\neq V_1$. Then $\pi_1(U)$ has a non-zero centraliser 
in $V_1$; say $0\neq v\in V_1$ centralises every element of 
$\pi_1(U)$. But then $v$ centralises every element of $U$ and 
$\langle v, U\rangle$ is still abelian. But then Case 1 applied to 
$\langle v, U\rangle$ completes the proof.

\noindent {\it Case 3: suppose 
that $U$ contains an element $u$ of the form $v_i+v_j$ where $v_i$ 
and $v_j$ are non-zero elements of $V_i$ and $V_j$ with $i\neq j$.}

Using 
Case 2, we can suppose that $\pi_i(U)=V_i$ and so there is an element 
$u'\in U$ with $(v_i, \pi_i(u'))\neq 0$. But $(u,u')=0$ as both $u$ and 
$u'$ are non-zero elements of the abelian subspace $U$ and so 

$$0=(u,u')=(v_i+v_j, u')=(v_i,\pi_i(u'))+(v_j,\pi_j(u')).$$
Thus we 
also have that $(v_j,\pi_j(u'))$ is non-zero and that the 
(one-dimensional) subspaces $(V_i,V_i)$ and $(V_j, V_j)$ are equal. 
But this was prohibited in the definition of a symplectic 
base.

\noindent {\it Case 4: the general case.}
By Case 2, we can assume that 
$\pi_1(U)=V_1$. Let $\dim(V_1)=2m_1$ and let $U_1=U\cap \ker(\pi_1)$. 
Thus $\dim(U_1)=\dim(U)-2m_1\ge m-2m_1$.

Choose an abelian subspace 
$X_1$ of dimension $m_1$ in $V_1$ and set $X=U\cap \pi_1^{-1}(X_1)$. 
Then $\dim X=\dim U-m_1\ge m-m_1$. 

Observe that, if $u\in U$ and 
$u=\pi_1(u)$ then $u\in V_1$ and so we can use Case 1 if $u\neq 0$. 
Thus the map $id-\pi_1$ is injective on $U$ and so $Y=(id-\pi_1)(X)$ 
is a subspace of $\ker(\pi_1)$ having dimension at least $m-m_1$.

We claim that $Y$ is abelian. For $i=1,2$ let $x_i\in X$ so that 
$y_i=x_i-\pi_1(x_i)\in Y$.
Then
\begin{eqnarray*}
(y_1,y_2) & = & 
(y_1,x_2)-(y_1, \pi_1(x_2)) =(y_1,x_2)\ \hbox{ as $y_1\in 
\ker(\pi_1)$ and $\pi_1(x_2)\in V_1$}\\
 & = 
&(x_1,x_2)-(\pi_1(x_1),x_2) =-(\pi_1(x_1),x_2)\ \hbox{ as $x_1,x_2\in 
U$}\\
& = &-(\pi_1(x_1),x_2)+(\pi_1(x_1), \pi_1(x_2))\ \hbox{ as 
$\pi_1(x_i)\in X_1$}\\
& = &-(\pi_1(x_1), x_2-\pi_1(x_2))=0\  \hbox{ 
as above.}
\end{eqnarray*}

Thus $Y$ is an abelian subspace of 
$\ker(\pi_1)$ and we can apply the inductive hypothesis to show that 
$Y$ is the direct sum of the $Y\cap V_i$. In particular, $Y\cap V_2$ 
is non-zero; say $v\in V_2$ and $v\neq 0$ with $v\in Y$. So 
$v=x-\pi_1(x)$ with $x\in X$. But then $x\in U$ and $x=\pi_1(x)+v$ 
with $\pi_1(x)\in V_1$ and $v\in V_2$. This falls under one of Cases 
1 or 3 and so completes the proof.
\end{proof}

We now turn to 
establishing the existence of a symplectic base from the existence of 
`sufficient' large abelian subspaces. The latter will arise in the 
application because of the existence and size of the carrier space 
subgroups corresponding to {\hol} modules.

Suppose that $\dim V+\dim 
\zeta(V)$ is even; equal, say, to $2m$. 

\begin{defin}We shall say 
that $V$\ {\rm has ample abelian subspaces} if whenever $X$ is a 
subspace of $V$ containing an abelian subspace of dimension $m$ and satisfying
$\dim X+\dim \zeta(X)\ge 2m$ then equality holds and there exists a 
non-empty set $\Omega_X$ of $m$-dimensional abelian subspaces of $X$ 
so that:
\begin{enumerate}
  \item if $\dim(X/\zeta(X))>2$ then, 
given $U_1\in \Omega_X$, there exists $U_2\in \Omega_X$ with $U_1\cap 
U_2 > \zeta(X)$;
  \item given any abelian subspace $U$ of $X$, there 
exists $U_1\in \Omega_X$ such that $U\cap U_1 \le 
\zeta(X)$.
\end{enumerate}
\end{defin}
Observe that the subspaces 
$X$, which contain abelian subspaces of dimension $m$ and satisfy the 
inequality, inherit the property of having ample abelian 
subspaces.

\begin{prop} \label{ample}If $V$ has ample abelian 
subspaces then $V$ has a symplectic base.
\end{prop}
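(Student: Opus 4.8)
The statement to prove is Proposition \ref{ample}: if $V$ has ample abelian subspaces then $V$ has a symplectic base. The plan is to argue by induction on $\dim V$, working first modulo the centre so that we may assume $\zeta(V) = 0$ and $\dim V = 2m$. The strategy is to peel off one summand $V_1$ at a time, where $V_1$ is a minimal non-trivial "block" on which $\phi$ restricts to a non-degenerate symplectic form with one-dimensional image, and then apply the inductive hypothesis to a complementary subspace. The heart of the matter is to locate such a $V_1$ and to show that the complement again has ample abelian subspaces.

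First I would fix $U_1 \in \Omega_V$; by Proposition \ref{absubsp}-type reasoning this is an abelian subspace of dimension $m$. Using property (2) of ample abelian subspaces applied to the abelian subspace $U_1$ itself is vacuous, so instead I would use property (1): if $\dim V > 2$ there is $U_2 \in \Omega_V$ with $U_1 \cap U_2 \ne 0$. Pick $0 \ne v \in U_1 \cap U_2$. The subspace $\mathcal{C}(v)$ has co-dimension equal to the dimension of the image of $\phi_v : w \mapsto \phi(v,w)$; the candidate "block" $V_1$ will be built to contain $v$ and to have image $(V_1,V_1)$ equal to the line spanned by a suitably chosen value $\phi(v,w_0)$. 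The key point is to choose $w_0$ so that the resulting block is as small as possible: I would take $V_1$ to be generated by $v$ together with enough elements so that $\phi|_{V_1}$ is non-degenerate with one-dimensional image, and $(V_1, V') = 0$ for a complement $V'$. Establishing that such a minimal block exists — and in particular that the image $\phi_v(V)$ decomposes compatibly — is where I expect the real work to be; this is essentially a statement that the alternating form, restricted along the "obvious" candidate blocks, behaves well, and it is here that properties (1) and (2) of $\Omega_V$ must be invoked to rule out the bad overlaps (analogous to Case 3 in the proof of Proposition \ref{absubsp}, where $(V_i,V_i) = (V_j,V_j)$ was prohibited).

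Once $V_1$ is found, I would set $V = V_1 \oplus V'$ with $(V_1, V') = 0$, check that $\zeta(V') = 0$ and $\dim V' = 2(m - m_1)$ where $2m_1 = \dim V_1$, and verify that $V'$ has ample abelian subspaces: given a subspace $X' \subseteq V'$ with an abelian subspace of dimension $m - m_1$ and $\dim X' + \dim \zeta(X') \ge 2(m-m_1)$, one enlarges to $X = V_1' \oplus X'$ inside $V$ (with $V_1'$ a fixed abelian half of $V_1$), applies the ampleness of $V$ to get $\Omega_X$, and projects down to obtain $\Omega_{X'}$; properties (1) and (2) transfer by intersecting with $V'$, using that $X \cap V_1 = V_1'$ has the right dimension by Proposition \ref{absubsp}. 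Then by induction $V' = \bigoplus_{i \ge 2} V_i$ is a symplectic base; one checks the four axioms for $V = \bigoplus_{i \ge 1} V_i$, the only non-trivial one being axiom (4), that $(V_1, V_1) \ne (V_j, V_j)$ for $j \ge 2$ — and this is exactly what the minimality of the block $V_1$, together with property (2) of $\Omega_V$, is designed to guarantee (it forces that no abelian $U_1$ can "see" two blocks with the same image, which by the contrapositive separates the lines $(V_i, V_i)$). The main obstacle, to reiterate, is the base-of-the-induction extraction: showing that $V$ decomposes as block-plus-complement with vanishing cross-pairing, rather than merely producing an abelian summand, since a priori the abelian subspaces in $\Omega_V$ only control the "Lagrangian" halves and not the blocks themselves.
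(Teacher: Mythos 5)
Your outline correctly identifies the shape of the argument (reduce to $\zeta(V)=0$, induct on dimension, use properties (1) and (2) of $\Omega_V$), but it leaves the central step as an acknowledged gap, and that gap is real. You want to locate a non-degenerate block $V_1$ with one-dimensional image and a complement $V'$ with $(V_1,V')=0$, and then induct on $V'$. Nothing in the hypothesis gives you direct access to such a $V_1$: a priori, for a vector $v$ drawn from $U_1\cap U_2$ the map $\phi_v$ may have image of dimension greater than one, so $v$ does not even sit in a candidate block with one-dimensional image, and there is no obvious way to isolate one. Property (2) controls only the Lagrangian halves (elements of $\Omega_V$), not the blocks, and you say so yourself. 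The subsidiary claim that $V'$ would again have ample abelian subspaces is also not obviously true under your definition of $\Omega_{X'}$ by intersection; the conditions in the definition are not manifestly stable under that operation.

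The paper's proof does not extract a block at all. Starting from an arbitrary $m$-dimensional abelian $U$, it uses (2) to find $U_1\in\Omega_V$ with $U\cap U_1=\{0\}$, uses (1) to find $U_2\in\Omega_V$ with $U_1\cap U_2\neq\{0\}$, and applies the inductive hypothesis to the proper subspace $X=U_1+U_2$ (which inherits ampleness and has $\zeta(X)=U_1\cap U_2$). From a symplectic base of a complement of $\zeta(X)$ in $X$, together with Proposition \ref{absubsp} applied to $U\cap X$, it manufactures a second abelian subspace $U_3$ meeting $U$ in dimension one, so that $U+U_3$ is a co-dimension-one subspace with one-dimensional centre, again covered by the inductive hypothesis. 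The conclusion at that stage is only that every $m$-dimensional abelian $U$ sits inside a co-dimension-one subspace with one-dimensional centre and a symplectic base. The final assembly is then a dichotomy on the centres of two such co-dimension-one subspaces: if some pair of centres fails to commute, $V$ decomposes directly; if all such centres commute, the span $Y$ of all of them is abelian, and property (2) forces a contradiction. So the paper dodges exactly the obstacle you flag, never needing to exhibit a block-plus-complement splitting from scratch. Your sketch would need, at minimum, a genuinely new argument for the block extraction to be completed, and I do not see how to supply one along the lines indicated.
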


\begin{proof} 
Observe that, if $V$ has ample abelian subspaces, then so also does 
$V/\zeta(V)$ and if $V/\zeta(V)$ has a symplectic base then so also 
does $V$. Thus we can assume that the centre of $V$ is zero. We shall 
use induction on the dimension of $V$.

Let $U$ be any abelian 
subspace of $V$ with dimension $m$. Then there exists $U_1\in 
\Omega_V$ so that $U\cap U_1=\{0\}$. There also exists $U_2\in 
\Omega_V$ so that $U_1\cap U_2$ is not zero; set $k=\dim(U_1\cap 
U_2)$. Then, as the $U_i$ are abelian of dimension $m$, we have that 
$X=U_1+U_2$ has dimension $2m-k$ and centre at least $U_1\cap U_2$. 
Thus $\dim X+\dim \zeta(X)\ge 2m$. Thus  equality holds and  $X$ has 
ample abelian  subspaces. Using the inductive hypothesis, $X$ 
therefore has a symplectic base. Also, as 
$$\dim X+\dim \zeta(X)= 
2m=\dim X+\dim(U_1\cap U_2),$$
it follows that $U_1\cap 
U_2=\zeta(X)$.

Consider $U\cap X$. As $U_1\le X$ and $U\cap 
U_1=\{0\}$, with both $U$ and $U_1$ of dimension half that of $V$, it 
follows that $U+U_1=V$ and so $U+X=V$. Thus $U\cap X$ has  dimension 
$m-k$. Further, as $\zeta(X)=U_1\cap U_2$, we have $(U\cap X)\cap 
\zeta(X)=\{0\}$. Choose a complement $X'$ to the centre of $X$ which 
contains $U\cap X$. Then $X'$ has a symplectic base $X'=\oplus X_i$ 
with trivial centre and, by Proposition \ref{absubsp}, 
$$U\cap 
X=\bigoplus_i (U\cap X)\cap X_i.$$

Form a new abelian subspace 
$U_3'$ of $X'$ by taking abelian subspaces in $X_i$ which, for $i>1$ 
complement $ (U\cap X)\cap X_i$ and for $i=1$, have dimension equal 
to that of $ (U\cap X)\cap X_1$ but intersect it in dimension 1. The 
existence of such subspaces within the $X_i$ follows easily from the 
fact that the restriction of $\phi$ to $X_i$ is a non-degenerate 
form.  Let $U_3=U_3'+\zeta(X)$.  Then $U_3$ has dimension $m$ and 
$U_3\cap U=U_3\cap(U\cap X)$ has dimension 1. Let $X''=U+U_3$. As 
before, we can show that $X''$ has ample abelian subspaces and so the 
inductive hypothesis tells us that $X''$ has a symplectic base with 
centre of dimension 1. Thus we have shown that every abelian subspace 
of dimension $m$ lies in a subspace of dimension $2m-1$ with centre 
of dimension 1 and a symplectic base.

Let $V_1$ and $V_2$ be two 
such subspaces of co-dimension 1 with centres $Z_1$ and $Z_2$ of 
dimension 1. Suppose that $(Z_1,Z_2)\neq \{0\}$. Set $V_3=V_1\cap 
V_2$ and $Z_3=Z_1+Z_2$. Then $(V_3,Z_3)=\{0\}$ and $V_3\cap 
Z_3=\{0\}$ so that $V=V_3\oplus Z_3$. But $V_2=V_3\oplus Z_2$ and so 
$V_3$ has a symplectic base. Hence so also does $V$.

We are left 
with the possibility that, whenever $V_1$ and $V_2$ are subspaces of 
co-dimension 1 with a symplectic base then their centres commute. 
That is, the subspace $Y$ spanned by all centres of such subspaces of 
co-dimension one is abelian. But then there is a subspace $U\in 
\Omega_V$ so that $Y\cap U=\{0\}$. We have shown, however, that $U$ 
can be placed inside a subspace $V'$ of co-dimension 1 with a 
symplectic base and that $U$ must then contain the centre of $V'$ and 
so intersect $Y$. Thus this case is not possible and the proof is 
complete.
\end{proof}
\subsection{Proof of Theorem \ref{main}}

Let 
$S$ be a free generating set for $A$ and let $\bar S=\{\bar a: a\in S\}$ 
be its image in $FA$. Let $N$ denote the multiplicative subgroup of 
$FA$ generated by $\bar S$. Then the derived subgroup $N'$ of $N$ 
will lie in $F$ and $A$ will be isomorphic to $N/N'$.

Thus 
commutation in $N$ will yield an alternating ($\mathbb{Z}$-)bilinear 
map \\$\hat \phi: A \times A \longrightarrow N'$. Set $V=A \otimes 
\mathbb{Q}$ and $W=N' \otimes \mathbb{Q}$. Then $\hat \phi$ extends 
to an alternating $\mathbb{Q}$-bilinear map $\phi:V \times V 
\longrightarrow W$. In the language of the previous section, we wish 
to show that $V$ has a symplectic base. Thus we will need to show 
that $V$ has ample abelian subspaces and we can then use Proposition 
\ref{ample}.

Let $X$ be any subspace of $V$ satisfying $\dim X+\dim 
\zeta(X)\ge 2m$ and containing an abelian subspace of dimension $m$. 
Let $B$ be the isolated subgroup $A$ corresponding to subspace $X$. 
Then 
$\rank B +\rank \zeta(B)\ge 2m$. Thus we can apply Lemma 
\ref{largecentre} to show that equality holds and that 
$F_1(B/\zeta(B))$ has a \hol\  module $M(B)$ (of dimension 
$m-\rank \zeta(B)$). Let $\Omega'_X$ denote the complete inverse 
image under the map $B \rightarrow B/\zeta(B)$ of the set of carrier 
space subgroups of $\Delta^*(M(B))$. These will have rank equal to 
$\dim M(B)+\rank \zeta(B)=m$. Further we may suppose $m\ge 2$, and 
if we apply Lemma \ref{nontrivinter} to $B$, we see that for each element $C$ of 
$\Omega'_X$, there is another element $C'$ so that $C+C'$ is of rank 
$2m$. Let  $\Omega_X$ denote the subspaces spanned by the elements of 
$\Omega'_X$. Then each element of $\Omega_X$ is supplemented by 
another element of $\Omega_X$ and, considering dimensions, we see 
that these two subspaces have trivial intersection. Thus condition 
(1) is satisfied in the requirement for ample abelian subspaces. 
Condition (2) follows immediately from Lemma \ref{trivinter}.

Thus 
$V$ has ample abelian subspaces and we can apply Proposition 
\ref{ample} to show that $V$ has a symplectic base. Note that, 
because we assumed the centre of $A$ to be trivial, then $\zeta(V)$ 
will also be trivial and we will have 
$$V=V_1\oplus \dots \oplus V_t.$$
Let 
$A_i$ be the isolated subgroup in $A$ corresponding to $V_i$. 
Then the $A_i$ will 
generate their direct product and this will have finite index in $A$. 
The remaining properties of the $A_i$ follow immediately.

It remains 
to prove that finitely generated $FA_i$-modules of $M$ are \hol. 
 From Proposition 2.5 of \cite{jrjgcjbb2}, $M$ has a critical 
submodule $M_0$. Let $N$ be a cyclic critical $FA_i$-submodule of $M$ 
having minimal dimension. We claim that $N$ is a \hol\  module for 
$FA_i$.

Firstly, note that $FA_i$ has centre $F$. Also observe that 
$N$ is torsion-free as $FB$-submodule for any commutative $FB$, since 
a similar statement is true for $M$. Denote by $2m_i$ the rank of 
$A_i$. It remains to show that $N$ has dimension $m_i$.

We can apply 
Theorem 5.5 of \cite{jrjgcjbb2} to show that 
$\pi_i(\Delta^*(M))^*=\Delta^*(N)$ where $\pi_i^*$ is the map $A^* 
\longrightarrow A_i^*$ induced by the injection $A_i \longrightarrow 
A$. Thus each carrier space of $\Delta^*(N)$ will be the image under 
$\pi_i^*$ of a carrier space of  $\Delta^*(M)$. We prove that, if $U$ 
is any carrier space of $\Delta^*(M)$, then $\pi_i(U)$ has dimension 
$m_i$ and it follows immediately that $\pi_i^*(N)$ has dimension 
$m_i$ and so that $N$ has dimension $m_i$.

Let $B_0$ denote the 
carrier space subgroup of $A$ which is dual to $U$. Thus $B$ has rank 
$m$ and, by (2) of Proposition \ref{cjbb}, $B$ has a subgroup of 
finite index $B_1$ so that $FB$ is commutative. But this implies, by 
Proposition \ref{absubsp}, that $B_1$ has a subgroup of finite index 
of the form $\oplus (B_1\cap A_i)$.  Since $B_1$ has rank 
$m=m_1+\dots+m_t$ and each $B_1\cap A_j$ has rank at most $m_j$, it 
follows that each $B_1\cap A_j$ has rank exactly $m_j$. In 
particular, $B_1\cap A_i$ has rank $m_i$. Thus $A_i/(B\cap A_i)$, and 
so also $(B+A_i)/B$ has rank $m_i$. Passing back to the dual, we see 
that $U/U \cap A_i^{\circ}$ has dimension $m_i$, where $A_i^{\circ}$ 
is dual to $A_i$; that is, it is the kernel of $\pi_i^*$.  Hence 
$\pi_i^*(U)$ has dimension $m_i$ as required and the proof is 
complete.

\section{Finitely presented 
abelian-by-nilpotent-of-class-two groups}

We now aim to convert the results of the previous section into 
results about finitely presented abelian-by-nilpotent-of-class-two 
groups. Suppose that 
$$\{1\} \longrightarrow M  \longrightarrow G 
\longrightarrow H  \longrightarrow \{1\}$$
with $M$ abelian, $H$ 
nilpotent of class 2 and with $G$ finitely presented. Thus $M$ is a 
finitely generated $\mathbb ZG$-module. We shall write the operation 
of $M$ as addition. 

We shall use the assumption of finite 
presentation for $G$ in the form of the somewhat weaker consequence 
that the second homology \\$H_2(M,Z)= M\wedge_{\mathbb Z} M$ is 
finitely generated when considered as a $\mathbb ZG$-module via the 
diagonal action. There is an immediate problem in  that we would like 
to pass to submodules $M_1$ of $M$ but that the natural map 
$M_1\wedge M_1 \rightarrow M\wedge M$ is, in general, not an 
injection. It is, however, an injection, in case either $M$ and $M_1$ 
are torsion-free or $M$ and $M_1$ are both of the same prime exponent 
$p$. In the former case we use the fact that $M$ and $M'$ are both 
flat $\mathbb Z$-modules and in the latter case, we observe that the 
exterior square over $\mathbb  Z$ is equal to the exterior square 
over $\mathbb Z/p\mathbb  Z$. Combining this with the fact that 
epimorphisms of modules yield epimorphisms of exterior squares, we 
obtain the following.

\begin{lemma} \label{wedge}Suppose that $M_1$ 
is a submodule of $M$ and that $M\wedge M$ is finitely generated as 
$\mathbb ZH$-module.
\begin{enumerate}
  \item if  $M_1$ is 
torsion-free, then $M_1\wedge M_1$ is  finitely generated as $\mathbb 
ZH$-module;
 \item if $M_1$ has prime exponent $p$ and the 
$p$-torsion subgroup of $M$ has bounded exponent then $M_1$ has a 
non-zero submodule $M_2$ so that $M_2\wedge M_2$ is  finitely 
generated as $\mathbb ZH$-module .
\end{enumerate}
\end{lemma}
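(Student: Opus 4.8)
The plan is to establish each part by reducing to a situation where the exterior square is well-behaved under passage to submodules, using the two algebraic facts recalled just before the statement: that $M_1 \wedge M_1 \to M \wedge M$ is injective when both are torsion-free or when both have the same prime exponent $p$, and that surjections of modules induce surjections of exterior squares. Throughout I would use the hypothesis that $M \wedge M$ is finitely generated as a $\mathbb{Z}H$-module, and that $\mathbb{Z}H$ is Noetherian (since $H$ is polycyclic), so that submodules of finitely generated $\mathbb{Z}H$-modules are again finitely generated.

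For part (1), since $M_1$ is torsion-free and is a subgroup of $M$, both $M_1$ and $M$ are flat $\mathbb{Z}$-modules, so the natural map $M_1 \wedge_{\mathbb{Z}} M_1 \to M \wedge_{\mathbb{Z}} M$ is injective; moreover it is visibly a map of $\mathbb{Z}H$-modules for the diagonal action. Thus $M_1 \wedge M_1$ is isomorphic to a $\mathbb{Z}H$-submodule of the finitely generated $\mathbb{Z}H$-module $M \wedge M$, and since $\mathbb{Z}H$ is Noetherian, $M_1 \wedge M_1$ is finitely generated as a $\mathbb{Z}H$-module. This is essentially immediate once the flatness observation is in place.

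For part (2), the obstacle is that $M_1$ has exponent $p$ while $M$ need not, so I cannot directly compare $M_1 \wedge M_1$ with $M \wedge M$. The remedy is to pass to the $p$-torsion subgroup. Let $M^{(p)}$ denote the $p$-power-torsion subgroup of $M$; by hypothesis this has bounded exponent, say $p^k$. It is a $\mathbb{Z}H$-submodule, and $M^{(p)} \wedge M^{(p)}$ is a $\mathbb{Z}H$-submodule of $M \wedge M$ (here one needs that the inclusion $M^{(p)} \wedge M^{(p)} \to M \wedge M$ is injective, which follows because the cokernel of $M^{(p)} \hookrightarrow M$ is torsion-free-by-(prime-to-$p$)-torsion; alternatively, and more cleanly, replace $M$ by $M^{(p)}$ at the outset and note $M^{(p)} \wedge M^{(p)}$ is finitely generated as a $\mathbb{Z}H$-module since $\mathbb{Z}H$ is Noetherian). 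So without loss of generality $M$ itself has bounded $p$-exponent $p^k$. Now inside $M$ consider the submodule $pM[\,p\,]$, or more to the point the filtration by $p^i M$; since $M_1 \le M[\,p\,] = \{x \in M : px = 0\}$ and $M[\,p\,]$ is an $\mathbb{F}_p H$-module, I would take $M_2 = M_1 \cap p^{k-1}M$, which is a non-zero $\mathbb{Z}H$-submodule of $M_1$ provided $M_1 \cap p^{k-1}M \ne 0$ — and this holds for a suitable choice, because $p^{k-1}M$ is exactly the socle-type piece annihilated by $p$ and sitting at the bottom of the $p$-filtration. The point of choosing $M_2$ inside $p^{k-1}M$ is that $p^{k-1}M$ is an $\mathbb{F}_pH$-module which is the image of $M$ under multiplication by $p^{k-1}$, hence $p^{k-1}M \wedge_{\mathbb{F}_p} p^{k-1}M$ is a quotient of $M \wedge_{\mathbb{F}_p}(\,\cdot\,)$, and over $\mathbb{F}_p$ the exterior square of $M_2$ injects into that of $p^{k-1}M$ (both being $\mathbb{F}_p$-spaces, hence flat over $\mathbb{F}_p$). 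Finally, using the recalled fact that for $\mathbb{F}_p$-modules the exterior square over $\mathbb{Z}$ agrees with that over $\mathbb{F}_p$, we get $M_2 \wedge_{\mathbb{Z}} M_2 = M_2 \wedge_{\mathbb{F}_p} M_2$ embedding into $p^{k-1}M \wedge_{\mathbb{F}_p} p^{k-1}M$, which is a $\mathbb{Z}H$-subquotient of $M \wedge M$ and hence finitely generated over the Noetherian ring $\mathbb{Z}H$; therefore so is $M_2 \wedge M_2$.

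The main obstacle, as anticipated, is purely in part (2): arranging that $M_2$ lies simultaneously inside $M_1$ and inside a piece of $M$ on which the exterior-square comparison is an injection. The clean route is first to reduce to $M$ of bounded $p$-exponent (harmless, replacing $M$ by its $p$-torsion submodule), then to use the bottom layer $p^{k-1}M$ of the $p$-adic filtration as the ambient $\mathbb{F}_pH$-module and intersect $M_1$ with it; one should double-check this intersection is non-zero, which can be arranged by first replacing $M_1$ by $M_1 \cap p^{j}M$ for the largest $j$ with $M_1 \cap p^j M \ne 0$ (this $j$ exists and is at most $k-1$ since $p^k M = 0$), and taking $M_2$ to be this last non-zero term, which is then annihilated by $p$ and lies in $p^{k-1}M$ as required. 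Everything else is routine functoriality of $\wedge$ together with Noetherianity of $\mathbb{Z}H$.
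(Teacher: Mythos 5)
Your part (1) reaches the right conclusion but gives an incorrect justification. You write ``since $M_1$ is torsion-free and is a subgroup of $M$, both $M_1$ and $M$ are flat $\mathbb{Z}$-modules''; but the lemma makes no torsion-freeness assumption on $M$, and a torsion-free subgroup does not make the ambient group torsion-free. So flatness of $M$ is unavailable and the direct appeal to ``both torsion-free'' fails. The fix is the detour the paper takes: since $M_1$ is torsion-free, $M_1 \cap T(M) = \{0\}$ where $T(M)$ is the torsion subgroup, so $M_1$ embeds in the torsion-free quotient $\overline{M} = M/T(M)$; now $\overline{M}\wedge\overline{M}$ is finitely generated (as a quotient of $M\wedge M$), and $M_1\wedge M_1 \hookrightarrow \overline{M}\wedge\overline{M}$ does follow from the recalled fact, both modules being torsion-free. (In fact this shows $M_1\wedge M_1\to M\wedge M$ is injective after all, but not for the reason you state.)

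Your part (2) has a more serious structural problem. The opening reduction --- replacing $M$ by its $p$-power torsion subgroup $M^{(p)}$ --- requires the injectivity of $M^{(p)}\wedge M^{(p)}\to M\wedge M$, which is not one of the two recalled facts (neither ``both torsion-free'' nor ``both of exponent $p$'' applies, since $M^{(p)}$ has exponent $p^k$ and $M$ is arbitrary). Your parenthetical ``cokernel is torsion-free-by-(prime-to-$p$)-torsion'' would need to be turned into an actual argument (say, localisation at $p$), and your ``alternatively'' is circular: you cannot deduce that $M^{(p)}\wedge M^{(p)}$ is finitely generated from Noetherianity alone --- the whole point of the lemma is that exterior squares of finitely generated modules are not automatically finitely generated. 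There is also a smaller slip at the end: after replacing $M_1$ by its intersection with $p^j M$ for the largest $j$ with nonzero intersection, the resulting $M_2$ lies in $p^j M$, not in $p^{k-1}M$ unless $j = k-1$; the correct ambient module of exponent $p$ to compare with is $p^j M/p^{j+1}M$, which is a quotient of $M$.

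The paper's proof of (2) sidesteps the $M^{(p)}$-reduction entirely, which is precisely what makes it clean. It notes that if $T$ is the $p$-torsion subgroup of $M$ with exponent $p^l$ then $p^l M\cap T = \{0\}$, hence $p^l M\cap M_1 = \{0\}$; it picks $k$ minimal with $p^k M\cap M_1 = \{0\}$, embeds $M_1$ in $M/p^k M$, and sets $M_2 = M_1\cap p^{k-1}M$ (nonzero by minimality). Then $M_2$ embeds in $\overline{M}=p^{k-1}M/p^k M$, which is a \emph{quotient} of $M$ (via $m\mapsto p^{k-1}m+p^k M$) of exponent $p$, so $\overline{M}\wedge\overline{M}$ is finitely generated for free and the recalled ``both of exponent $p$'' fact gives $M_2\wedge M_2 \hookrightarrow \overline{M}\wedge\overline{M}$. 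Comparing with quotients rather than submodules of $M$ is what removes the need for any extra injectivity lemma; I would recommend restructuring your (2) along these lines.
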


\begin{proof} If $M_1$ is torsion-free then $M_1$ is isomorphic to a 
submodule of the quotient $\overline  M$ of $M$ by its torsion 
subgroup. Then $\overline M \wedge \overline M$ is finitely generated 
and contains a submodule isomorphic to $M_1\wedge M_1$.  Since $\mathbb 
ZH$ is Noetherian. it follows that $M_1\wedge M_1$ is finitely generated

If $M_1$ has prime exponent $p$ and if the $p$-torsion subgroup $T$ 
of $M$ has exponent $p^l$ then $p^lM\cap T=\{0\}$ and so $p^lM\cap 
M_1=\{0\}$. Choose $k$ minimal so that $p^kM \cap M_1 = \{0\}$. Then 
$M_1$ is isomorphic to a submodule of $M/p^kM$. Let $M_2$ be the 
complete inverse image in $M_1$ of  $\overline M=p^{k-1}M/p^kM$. The 
map $m \mapsto p^{k-1}m+p^kM$ is a homomorphism (of $\mathbb 
ZH$-modules) and so $\overline M\wedge \overline M$ is finitely 
generated. As $\overline M$ has exponent $p$ and has a submodule 
isomorphic to $M_2$, it follows that $M_2\wedge M_2$ is finitely 
generated.
\end{proof}

\begin{lemma} \label{half} Let $G$ be a finitely presented group with 
an abelian normal subgroup  $M$ with quotient which is
nilpotent of class 2.  Let $M_1$ be a non-trivial
$G$-normal subgroup of $M$ so that $M_1\wedge M_1$ is finitely 
generated as $\mathbb ZG$-module. Let $C$ denote the centraliser of
$M_1$ in $G$; denote $G/C$ by $H$ and let $Z$ denote the centre of 
$H$. Suppose:
\begin{enumerate}
  \item  $\Ann_{\mathbb ZZ} (M_1)=P$ 
is prime;
  \item $M_1$ is torsion-free as $\mathbb ZZ/P$-module;
 
\item  for some subgroup $K$ of $H$ with $Z \subseteq K$,  $M_1$ is 
not torsion as $\mathbb
ZK/(P.\mathbb ZK)$-module.
\end{enumerate}  
Then $$\rank (K/Z) \le 
\frac 12 \rank(H/Z).$$
\end{lemma}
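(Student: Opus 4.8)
The plan is to realise $M_1$ as a module over a crossed product $FA$ whose centre is $F$ and with $\rank A=\rank(H/Z)$, and then to play the hypothesis on $K$ against the finite presentation of $G$: the former will force a lower bound on the dimension of that module, the latter an upper bound, and the two bounds combine to give the inequality.

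First I would set up the crossed product. Since $Z$ is central in $H$ and $M_1$ is $G$-invariant, the ideal $P\,\mathbb{Z}H$ annihilates $M_1$, so $M_1$ is a module over $\mathbb{Z}H/P\,\mathbb{Z}H$; choosing coset representatives for $Z$ in $H$, and using that $H/Z$ is abelian (as $H$ is a quotient of the class-two group $G/M$), exhibits this ring as a crossed product $R*(H/Z)$ of the domain $R=\mathbb{Z}Z/P$ (condition 1) with $H/Z$. By condition 2, $M_1$ is $R$-torsion-free, so localising at $R\setminus\{0\}$ and making the standard reductions needed to pass to the free part of $H/Z$ (these do not affect the ranks occurring in the statement) produces a non-zero module $\widetilde M$ over a crossed product $FA$ with $F=\operatorname{Frac}(R)$ and $\rank A=\rank(H/Z)=:n$. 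That $F$ is the centre of $FA$ comes from $Z$ being the centre of $H$ together with $H$ acting faithfully on $M_1$ (as $C$ is the full centraliser): a non-trivial $a\in A$ lifts to some $h\in H\setminus Z$, so $[h,h']\neq 1$ for some $h'\in H$, and by faithfulness $[h,h']$ has non-trivial image in $R^{\times}\subseteq F^{\times}$, so $\bar a$ does not centralise $FA$.

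Condition 3 then gives the lower bound on $\dim_{FA}\widetilde M$. Writing $\bar B$ for the sub-crossed product of $FA$ corresponding to the image $B$ of $K/Z$ in $A$, condition 3 says precisely that $\widetilde M$ is not torsion as an $F\bar B$-module. Since $F\bar B$ is a Noetherian Ore domain, $\widetilde M/t(\widetilde M)$ is a non-zero torsion-free $F\bar B$-module and hence contains a copy of a non-zero right ideal of $F\bar B$; thus $\widetilde M$ contains a torsion-free $F\bar C$-submodule with $\rank C=\rank B=\rank(K/Z)$, and so $\dim_{FA}\widetilde M\ge\rank(K/Z)$.

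The substantial step is the matching upper bound $\dim_{FA}\widetilde M\le n/2$, and this is where the finite presentation enters, through the hypothesis that $M_1\wedge_{\mathbb{Z}}M_1$ is finitely generated as $\mathbb{Z}G$-module — equivalently (as $C$ acts trivially) as $\mathbb{Z}H$-module — for the diagonal action. Using condition 2 (so that, as in the proof of Lemma \ref{wedge}, the exterior square is insensitive to whether it is taken over $\mathbb{Z}$ or over $\mathbb{Z}/p$, and behaves well under the localisation at $R\setminus\{0\}$) one deduces that $\widetilde M\wedge_F\widetilde M$, with its diagonal $A$-action, is finitely generated over a crossed product of $F$ with $A$. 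Feeding this into the argument of \cite{BRW} and \cite{Brcp} — together with the behaviour of $\Delta$-sets and of dimension under the diagonal embedding $A\hookrightarrow A\times A$ from \cite{jrjgcjbb1,jrjgcjbb2}, and using that $F$ is the centre of $FA$ so that the commutator form on $A\otimes\mathbb{Q}$ is non-degenerate — yields $2\dim_{FA}\widetilde M\le n$; this is exactly the mechanism by which, as indicated in the introduction, finite presentation forces $2\dim M\le\rank A$. Combining the two bounds gives $\rank(K/Z)\le\dim_{FA}\widetilde M\le n/2=\frac12\rank(H/Z)$. I expect this upper-bound step to be the main obstacle: it is the only place making genuine use of the finite presentation, and the care lies in transporting the finite generation of the exterior square through the quotient by $P$ and the localisation while keeping track of the diagonal action and of the squaring of the defining cocycle.
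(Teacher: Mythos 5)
Your setup is sound and your lower bound is essentially correct, but the upper bound is precisely where your proposal has a genuine gap, and the paper's own proof takes a substantially different route to avoid it.

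The paper never localises at $R\setminus\{0\}$ and never works with Gelfand--Kirillov dimension over $FA$. Instead it stays inside integral group rings and works with Krull dimension. Concretely: from condition (3) it picks $m\in M_1$ with $V=m\cdot\mathbb{Z}K\cong\mathbb{Z}K/P\mathbb{Z}K$ and sets $M_2=V\cdot\mathbb{Z}H$; it then invokes Lemma~9 of Segal \cite{Seg77} to find a nonzero ideal $\Lambda$ of $J=\mathbb{Z}Z/P$ such that $M_2Q\cap V=VQ$ for any ideal $Q$ with $\Lambda\not\subseteq Q$. By the Nullstellensatz one can choose $Q$ maximal with $J/Q$ \emph{finite}, so that $M_2/M_2Q$ contains a copy of $V/VQ\cong\mathbb{Z}K/Q\mathbb{Z}K$; by Segal's Theorem~G and Smith's Krull-dimension computation this gives $\kdim(M_2/M_2Q)\ge\rank(K/Z)$. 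The matching upper bound then comes from Lemma~3 of Brookes \cite{Brex}, which applies because $\mathbb{Z}Z/Q$ is a finite integral domain of prime exponent and $M_1\wedge M_1$ is finitely generated: $2\kdim(M_2/M_2Q)\le\rank(H/B)\le\rank(H/Z)$. The whole point of descending to a finite residue field via Segal's lemmas is to land exactly in the setting where the exterior-power bound of \cite{Brex} is available.

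Your proposal instead passes to $\widetilde M=M_1\otimes_R F$ and asks for $2\dim_{FA}\widetilde M\le n$ from finite generation of the exterior square, referring generically to ``the argument of \cite{BRW} and \cite{Brcp}.'' Neither reference supplies that inequality in the crossed-product-over-a-field setting you have constructed; $F=\operatorname{Frac}(R)$ will typically be infinite (indeed of characteristic zero when $P\cap\mathbb{Z}=0$), which is exactly the case the paper's detour through a finite $\mathbb{Z}Z/Q$ is designed to avoid, since \cite{Brex} requires prime exponent. You also need to verify, not merely assert, that $\widetilde M\wedge_F\widetilde M$ (with the diagonal action, and the squared cocycle) remains finitely generated after tensoring with $F$ over $R$ — you flag this yourself as the ``main obstacle,'' and I agree: this is not a cosmetic difficulty but the crux, and as written the proposal does not close it. Your observation that $F$ is the centre of $FA$ (via faithfulness of $H$ on $M_1$) is correct and is indeed used later in the paper, but in Proposition~\ref{submod}, not in this lemma.
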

\begin{proof}
As $M_1$ is not torsion as $\mathbb ZK/(P.\mathbb ZK)$-module, there 
is $m\in M_1$ so that, if we set
$V=m.\mathbb ZK$, then $V$ is isomorphic to $\mathbb ZK/(P.\mathbb 
ZK)$. Set $M_2=V.\mathbb ZH$. We wish to apply Lemma 9 of Segal 
\cite{ Seg77}
to $V$ and $M_2$. Set $J=\mathbb ZZ/P$. By assumption, $M_1$ and so 
$M_2$ is torsion-free as $J$-module. Then Lemma 9 of \cite{ Seg77} 
tells us
that there is a non-zero ideal $\Lambda$ of $J$ so that if $Q$ is any 
ideal of $J$ which does not contain $\Lambda$ then $M_2Q\cap
V=VQ$.

As $J$ is a finitely generated commutative domain, it follows from 
the Nullstellensatz that the Jacobson radical is trivial (see, for 
example, Section 4.5 of \cite{Eis}) and so
there is a maximal ideal $Q_1$ of $J$ which does not contain 
$\Lambda$; further $J/Q_1$ will be finite. Let $Q$ be the (maximal)
ideal of $\mathbb ZZ$ so that $Q/P=Q_1$. Then $M_2Q\cap V=VQ$ and so 
$M_2/M_2Q$ contains a copy of $V/VQ\cong \mathbb ZK/Q.\mathbb
ZK$.

By Theorem G of Segal\cite{ Seg77}, $M_2/M_2Q$ has Krull dimension at 
least that of the $\mathbb ZK$-module $V/VQ$. But $V/VQ$ is easily 
seen to be
a crossed product of the (central) field $\mathbb ZZ/Q$ with the 
group $Z/K$ and a minor adaptation of the proof of Smith\cite{ Smi} 
for
group rings shows that the Krull dimension of $V/VQ$ equals $\rank(K/Z)$. Thus
\begin{equation}\label{krullin1}
\kdim(M_2/M_2Q)\ge \rank(K/Z)
\end{equation}
  where $\kdim(-)$ denotes Krull dimension and $\rank(-)$ denotes the 
torsion-free rank or Hirsch length.

It follows from 2. that $M_1$ is either $\mathbb Z$-torsion-free or of finite prime exponent. Thus, because $M_1\wedge M_1$ is finitely generated as $\mathbb ZH$-module, so also is $M_2\wedge M_2$ and hence also $M_2/M_2Q\wedge M_2/M_2Q$.  Let $B$ denotes the centraliser of $M_2/M_2Q$ in $H$. 
As  $\mathbb ZZ/Q$ is a finite integral domain, it will have prime 
exponent as abelian group and so,  we can apply Lemma 3 of
Brookes\cite{Brex} to obtain that
\begin{equation} \label{krullin2}
2\kdim (M_2/M_2Q) \le \rank(H/B).
\end{equation}
As $\mathbb ZZ/Q$ is finite, $B\cap Z$ must have finite index in $Z$
and so $\rank(H/B)=\rank(H/BZ) \le\rank(H/Z)$. Hence, combining this 
with (\ref{krullin1}) and (\ref{krullin2}), we obtain that
$$ 2 h(K/Z)\le \rank(H/Z)$$ as required.
\end{proof}

We now aim to translate the results of the previous section into a 
result for groups. We begin with a special case.
\begin{prop} \label{submod}Suppose the the sequence of groups
$$ 
\{1\} \longrightarrow M \longrightarrow G \longrightarrow H 
\longrightarrow \{1\}$$
is exact with $M$ abelian, $H$ torsion-free 
nilpotent of class 2 and $G$ finitely presented. Suppose that $M_1$ 
is a non-zero $G$-normal subgroup of $M$ with $M_1\wedge M_1$ 
finitely generated as $G$-module. Then there exists a subgroup $G_0$ 
of finite index in $G$ and a non-zero $G_0$-normal subgroup $N$ of 
$M_1$ so that  the quotient of $G$ by the centraliser of $N$  is a 
central product of groups which are either cyclic or Heisenberg.
\end{prop}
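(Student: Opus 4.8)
The plan is to localise the module at a prime of a central subgroup so as to produce a crossed product $FA$ with $F$ central, to recognise the localised module as \hol, and then to read the decomposition of $A$ furnished by Theorem~\ref{main} back as a central product decomposition of the nilpotent quotient attached to $N$.

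First I would carry out a reduction. With $C=C_G(M_1)$, $H=G/C$ and $Z=\zeta(H)$, the ring $\mathbb ZZ$ is commutative and Noetherian and acts on $M_1$, and its finite set of associated primes for $M_1$ is permuted by conjugation. Passing to a subgroup $G_0$ of finite index in $G$ stabilising one such prime $P$ (and, harmlessly, containing $M$), replacing $M_1$ by a suitable non-zero $G_0$-normal submodule $N$, and making one further finite-index passage to render the relevant nilpotent quotient torsion-free, I would arrange — writing now $C=C_{G_0}(N)$, $H_0=G_0/C$ (torsion-free nilpotent of class two), $Z=\zeta(H_0)$ and $A=H_0/Z$ (free abelian of finite rank) — that $\Ann_{\mathbb ZZ}(N)=P$ is prime, that $N$ is torsion-free over the Noetherian domain $R:=\mathbb ZZ/P$, and that $N$ is a critical $\mathbb ZH_0$-module. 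The finite-presentation hypothesis is carried along as the statement that $N\wedge N$ is finitely generated over $\mathbb ZH_0$: this is what Lemma~\ref{wedge} yields under passage to submodules, using part~(1) when $N$ is torsion-free as an abelian group and part~(2) — at the cost of one more shrinking of $N$ — in the bounded $p$-torsion case. Localising $\mathbb ZH_0/P\mathbb ZH_0$ at the central multiplicative set $R\setminus\{0\}$ produces a crossed product $FA$ with $F$ the field of fractions of $R$, and $N_F:=N\otimes_R F$ is a non-zero $FA$-module; since $N$ is a faithful $H_0$-module and $Z$ is the full centre of $H_0$, the map $H_0'\to F^{\times}$ is injective and $F$ is exactly the centre of $FA$.

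Next I would show that $N_F$ is a \hol\ $FA$-module. Condition~(2) of the definition — that $N_F$ be $FB$-torsion-free for every subgroup $B$ of $A$ with $FB$ commutative — I would deduce from the criticality of $N_F$ and the torsion-freeness of $N$ over $R$, via imperviousness of $N_F$ together with part~(3) of Proposition~\ref{cjbb}. For condition~(1), $\rank A=2\dim N_F$: the inequality $\dim N_F\ge\tfrac12\rank A$ is Brookes's inequality (parts~(1) and~(3) of Proposition~\ref{cjbb}), while the reverse inequality is where finite presentation enters. If $B$ is a subgroup of $A$ carrying a non-zero torsion-free $FB$-submodule of $N_F$ with $\rank B=\dim N_F$, then its preimage $K$ in $H_0$ contains $Z$, and $N$ is not torsion as a module over $\mathbb ZK/(P\mathbb ZK)$; Lemma~\ref{half} then gives $\rank(K/Z)\le\tfrac12\rank(H_0/Z)$, that is $\dim N_F\le\tfrac12\rank A$.

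With $N_F$ \hol, Theorem~\ref{main} applies: a subgroup of finite index in $A$ splits as $A_1\oplus\dots\oplus A_t$ with the $FA_i$ pairwise commuting, each $FA_i$ having centre $F$ and a defining $2$-cocycle whose image in $F^{\times}$ is infinite cyclic, these images being pairwise non-commensurable. Absorbing this and all earlier finite-index restrictions into $G_0$ (each replaces $H_0$ by a finite-index subgroup, realised by passing to the appropriate preimage in $G_0$), I may assume $A=A_1\oplus\dots\oplus A_t$, so that $H_0=\langle\widehat A_1,\dots,\widehat A_t\rangle$ where $\widehat A_i\le H_0$ is the preimage of $A_i$. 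Since $FA_i$ commutes with $FA_j$, the commutators $[\widehat A_i,\widehat A_j]$ map to $1$ in $F^{\times}$, hence act trivially on $N$, hence are trivial by faithfulness of $H_0$ on $N$; similarly $\widehat A_i'$ embeds in $F^{\times}$ as the infinite cyclic image of the cocycle of $FA_i$, and non-commensurability forces $\widehat A_i'\cap\widehat A_j'=1$; and the alternating form $A_i\times A_i\to\widehat A_i'\cong\mathbb Z$ induced by the cocycle is non-degenerate because $FA_i$ has centre $F$. Hence $A_i$ carries a symplectic basis, whose lifts generate a Heisenberg group $H_i$ with $\widehat A_i=H_iZ$ and $Z(H_i)=H_i\cap Z=\widehat A_i'$. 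Because the $A_i$ are independent in $A$ and $Z$ is central in $H_0$, the groups $H_1,\dots,H_t$ together with the infinite cyclic factors of a direct decomposition of the free abelian group $Z$ exhibit $H_0=G_0/C_{G_0}(N)$ as a central product of Heisenberg groups and cyclic groups, as required.

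The step I expect to be the main obstacle is the reduction of the second paragraph together with the verification of condition~(2) in the third. Keeping the properties ``$P$ prime, $N$ torsion-free over $\mathbb ZZ/P$, $N\wedge N$ finitely generated, $Z$ the full centre, $H_0$ torsion-free'' simultaneously valid through the several passages to submodules and finite-index subgroups is delicate — in particular in positive characteristic, where Lemma~\ref{wedge}(2) preserves finite generation of the exterior square only after a further shrinking of $N$ — and proving that $N_F$ is genuinely impervious, not merely torsion-free over $\mathbb ZZ/P$, is precisely where one must exploit that $N$ sits inside $M$ with $G$ finitely presented.
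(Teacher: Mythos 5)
Your outline follows the paper's own route almost step for step (associated prime $P$ of $\mathbb ZZ$ on $M_1$, Lemma \ref{wedge} to retain finite generation of the exterior square, localisation at $\mathbb ZZ/P$ to get an $FA$-module with $F$ central, Lemma \ref{half} for the upper bound on the dimension, Theorem \ref{main}, then translation back to a central product). But there is a genuine gap exactly at the point you flag and then leave unresolved: the imperviousness of $\widehat N=N_F$. This is not an optional refinement — it is the hinge of the whole argument. Both ingredients you need for ``\hol'' depend on it: the $FB$-torsion-freeness for every commutative $FB$ is obtained from part (3) of Proposition \ref{cjbb} (Theorem 4 of \cite{Brcp}), whose hypothesis is that the module is impervious, and the Bernstein-type lower bound $\dim N_F\ge\tfrac12\rank A$ (Corollary 5 of \cite{Brcp}, or your derivation from parts (1) and (3) of Proposition \ref{cjbb}) likewise holds only for impervious modules. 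Your proposed substitute — ``criticality of $N_F$ together with torsion-freeness of $N$ over $\mathbb ZZ/P$'' — does not deliver it: a module induced from $FB$ with $B$ of infinite index in $A$ can perfectly well be critical and torsion-free over the centre, yet it is the very thing imperviousness excludes, and for such a module both the torsion-freeness condition and the inequality $\dim\ge\tfrac12\rank A$ fail. The paper closes this gap by quoting Proposition 2 of \cite{BRsing}: finite generation of $N\wedge N$ as $\mathbb ZH$-module forces $N$ to be an impervious $\mathbb ZH$-module, and this is then transferred to $\widehat N$ as an $FA$-module (following the proof of Theorem 3.1 of the paper cited as [BG4]). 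Without that input (or an equivalent), the claim that $N_F$ is \hol\ is unsupported.

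A smaller but real defect is your reduction: you pass to $H_0=G_0/C_{G_0}(N)$ and assert that a further finite-index passage makes this quotient torsion-free. Quotients of torsion-free nilpotent groups need not be torsion-free, and passing to a subgroup of finite index does not in general repair this; moreover your later use of the centre $Z$ of $H_0$ and of $A=H_0/Z$ free abelian depends on it. The paper sidesteps the issue by never changing the ambient group: it works throughout with $H=G/M$ (torsion-free of class two by hypothesis), $Z=\zeta(H)$ and $A=H/Z$ (torsion-free since upper central factors of torsion-free nilpotent groups are torsion-free), and only at the very end reads off the structure of $H/C$ where $C$ is the centraliser of $N$ in $H$. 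Your final translation from the decomposition $A_1\oplus\dots\oplus A_t$ back to commuting Heisenberg subgroups is fine and is in fact more detailed than the paper's closing paragraph.
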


\begin{proof} Let $Z$ denote the centre of $H$ and consider $M_1$ as 
$\mathbb ZZ$-module. Let $P$ be a maximal associated prime of $M_1$ 
and let
$N_1$ be the victim of $P$ in $M_1$; that is, $N_1=\{n\in M_1: nP=0\}$. 
Then $N_1$ is a $\mathbb ZH$-submodule of $M_1$ and is torsion-free
as $\mathbb ZZ/P$-module. As abelian group, $N_1$ must either be 
torsion-free or of prime exponent $p$ and so we can apply Lemma 
\ref{wedge} to show that $N_1$ has a non-zero submodule $N$ with 
$N\wedge N$ finitely generated as $H$-module. It now follows from 
Proposition 2 of \cite{BRsing} that $N$ is an impervious $\mathbb 
ZH$-module.

Let $F$ be the field of fractions of $\mathbb ZZ/P$.  As $N$ is 
torsion-free as $\mathbb ZZ/P$-module, it will embed into $\widehat
N= N\otimes_{\mathbb ZZ/P} F$ and the latter has a natural structure 
as $\mathbb ZH/(P.\mathbb ZH) \otimes_{\mathbb ZZ/P}
F$-module. But it is easy to check that this ring has a natural 
structure as a crossed product of the central field $F$ by the
free abelian group of finite rank $A=H/Z$. Thus we now have an 
$FA$-module $\widehat N$.

If $\widehat N$ has dimension $d$ then there is a subgroup $B$ of $A$ 
of rank $d$ so that $\widehat N$ is not torsion as
$FB$-module. If $K$ is the subgroup of $H$ so that $K/Z=B$, then it 
is easily verified that $N$ is not torsion as $\mathbb
ZK/P.\mathbb ZK$-module. But then, by Lemma \ref{half}, $d=\rank(K/Z) 
\le (1/2)\rank(H/Z)=\rank(A)$; that is
$$\dim(\widehat M)\le  \frac 12\rank(A).$$

It is straightforward to check (see for example, the proof of Theorem 
3.1 of [BG4]) that $\widehat N$ is also impervious as $FA$-module. It
is also clear that since $F$ is constructed from the centre of $H$, 
it will be the centre of $FA$. Then, by Corollary 5 of
Brookes\cite{Brcp}, $\dim(\widehat N)\ge  \frac 12\rank(A)$ and so 
$\dim(\widehat N)=\frac 12\rank(A)$. We also have, from Theorem 4 of
Brookes\cite{Brcp}, that $\widehat N$ is $FB$-torsion-free for any 
$F$-abelian subgroup $B$ of $A$. Thus $\widehat N$ is a \hol\  
$FA$-module. It now follows from Theorem \ref{main} that $A$ has a 
subgroup of finite index of the form $\oplus A_i$ where each $A_i$ 
generates a Heisenberg group within $FA$ and these Heisenberg groups 
commute.

Let $C$ denote the centraliser of $N$ within $H$. Then 
there exists subgroups $H_i$ of $H$  so that $H_i/C$ is Heisenberg, 
the subgroups $H_i/C$ are pairwise commuting  and so that the 
subgroups $H_iZ/C$  generate a subgroup of finite index in $H$. Thus 
$\prod H_i Z/C$ is of finite index in $H/C$ and so $H/C$ has a 
subgroup of finite index which is a central product of subgroups 
which are either Heisenberg or cyclic.
\end{proof}

Recall that a 
submodule is {\it essential} if it is non-zero and has non-zero 
intersection with every non-zero submodule.
\begin{lemma} 
\label{artrees}Let $H$ be a finitely generated nilpotent group and 
let $M$ be a finitely generated $\mathbb Z H$-module. If $N$ is an 
essential submodule of $M$ and $K$ is a normal subgroup of $H$ which 
acts nilpotently on $N$, then $K$ acts nilpotently on 
$M$.
\end{lemma}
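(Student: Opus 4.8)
The plan is to reduce to the case where $K$ is cyclic and then to argue by a filtration of $M$ coming from the $K$-action. First I would recall that since $H$ is finitely generated nilpotent, $K$ is a finitely generated nilpotent group and one may choose a central series $K = K_0 \ge K_1 \ge \dots \ge K_r = \{1\}$ of $K$ with infinite cyclic or finite cyclic quotients, each $K_j$ normal in $H$; working up this series, it suffices to treat the case that $K$ is (virtually) cyclic, generated modulo its action by a single element $k$ which is central in $K$ modulo lower terms. So fix $k \in K$; the hypothesis gives that $(k-1)$ acts nilpotently on $N$, say $N(k-1)^s = 0$, and the goal is to show $(k-1)$ acts nilpotently on all of $M$.

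Next I would exploit essentiality together with the noetherian hypothesis. Consider the ascending chain of submodules $\Ann_M\big((k-1)^t\big)$ for $t \ge 1$. Since $\mathbb{Z}H$ is noetherian and $M$ is finitely generated, this chain stabilises, say at step $u$; let $M_\infty = \bigcup_t \Ann_M\big((k-1)^t\big) = \Ann_M\big((k-1)^u\big)$ be the submodule of elements killed by a power of $(k-1)$. By hypothesis $N \subseteq M_\infty$, and since $N$ is essential, $M_\infty$ is essential as well. The key claim is that $M_\infty = M$: if not, the quotient $M/M_\infty$ is a nonzero finitely generated $\mathbb{Z}H$-module on which $(k-1)$ acts injectively (by construction of $M_\infty$ and stabilisation of the chain). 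The hard part is to derive a contradiction from the coexistence of an essential submodule on which $(k-1)$ is nilpotent and a nonzero quotient on which $(k-1)$ is injective. I would handle this by choosing a nonzero $\mathbb{Z}H$-submodule $L/M_\infty$ of $M/M_\infty$ that is, say, $P$-prime ($P$ an associated prime of $M/M_\infty$ over $\mathbb{Z}Z(H)$ or over $\mathbb{Z}K$); then $L \cap M_\infty$ is a nonzero submodule of $M_\infty$ by essentiality, on which $(k-1)$ is nilpotent, whereas the action of $(k-1)$ on the $P$-prime module $L/M_\infty$ is either zero or injective, and injectivity forces $k-1 \notin P$. One then compares the $\mathbb{Z}K$-module structures of $L \cap M_\infty$ and of a submodule of $L$ mapping onto part of $L/M_\infty$: nilpotence of $(k-1)$ on $L \cap M_\infty$ means this submodule is $(k-1)$-torsion, while faithfulness on $L/M_\infty$ propagates back (because $L/(L\cap M_\infty) \hookrightarrow M/M_\infty$ and $L\cap M_\infty$ is essential in $L$) to give an element of $L$ not killed by any power of $(k-1)$ yet lying in the essential nilpotent submodule — a contradiction.

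Finally, once $M_\infty = M$ is established for each generator $k$ of $K$, I would reassemble: the set of elements $g \in K$ acting unipotently (i.e. with $(g-1)$ nilpotent) on the finitely generated module $M$ forms a subgroup, and since we have shown every element of $K$ lies in it, $K$ acts unipotently on $M$; as $K$ is finitely generated, a uniform bound on the nilpotency degree exists and $K$ acts nilpotently on $M$ in the required sense. The main obstacle is the middle step — turning essentiality plus noetherianity into the statement that no nonzero quotient can have $(k-1)$ acting injectively — and I expect the cleanest route is via associated primes over the central subring $\mathbb{Z}Z(H)$, reducing everything to comparing prime modules over a commutative noetherian ring where the dichotomy "$(k-1)$ acts as zero or as a nonzerodivisor" is transparent.
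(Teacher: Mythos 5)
The key step in your argument --- deducing $M_\infty = M$ from essentiality of $M_\infty$ --- is precisely where the real content lies, and the sketch you give does not close the gap. Essentiality of a proper submodule is entirely consistent with a nonzero quotient on which $(k-1)$ acts injectively; nothing about associated primes over $\mathbb{Z}Z(H)$ rules this out, not least because $k-1$ is typically not an element of that central subring, so the dichotomy ``zero or nonzerodivisor on a $P$-prime module'' does not apply to it. The contradiction you try to extract (``an element of $L$ not killed by any power of $(k-1)$ yet lying in the essential nilpotent submodule'') does not materialise: the element you produce lies in $L$ but \emph{outside} $M_\infty$, which is exactly what you already knew. What is actually needed here is the weak Artin--Rees property for the ideal $J = \ker(\mathbb{Z}H \to \mathbb{Z}(H/K))$: given $NJ^m = 0$, weak Artin--Rees produces an $n$ with $MJ^{mn}\cap N \subseteq NJ^m = 0$, and \emph{then} essentiality of $N$ finishes. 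The nontrivial input is that ideals of integral group rings of finitely generated nilpotent groups do satisfy weak Artin--Rees (this is the content of the Passman reference invoked in the paper); it is not automatic from Noetherianity alone, and your proposal tries to do without it.

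There are two further technical problems. First, the chain $\Ann_M\bigl((k-1)^t\bigr)$ is not a chain of $\mathbb{Z}H$-submodules unless $k$ is centralised by $H$: for $h\in H$ one has $h(k-1)^t = (hkh^{-1}-1)^t h$, so right-multiplying a killed element by $h$ replaces $k$ by its $h$-conjugate. One must instead use $\Ann_M(J^t)$ for the two-sided ideal $J$, and once you do, you are back to the paper's set-up. Second, the reduction to cyclic $K$ followed by reassembly via ``elements acting unipotently form a subgroup'' is not justified as stated: $(g-1)$ and $(h-1)$ need not commute, so nilpotence of each does not obviously give nilpotence of $(gh-1)$, let alone nilpotence of the full augmentation-type ideal. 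The paper sidesteps both issues by working with $J$ as a whole rather than generator by generator.
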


\begin{proof} Let $J$ denote the kernel of the 
natural map $\mathbb ZH \rightarrow \mathbb Z(H/K)$. Because $K$ acts 
nilpotently on $N$, we have $NJ^m=\{0\}$ for some $m$. Set $I=J^m$. 
Because $I$ is an ideal of the group ring of a nilpotent group, it 
satisfies the weak Artin-Rees condition; that is, there is an integer 
$n$ so that $MI^n \cap N =NI$ (see Theorem 11.3.11 and Theorem 11.2.8 
of Passman\cite{Passcr}). But $NI=\{0\}$ and $N$ is essential. Thus 
$MI^n=MJ^{mn}=\{0\}$ and so $K$ acts nilpotently on $M$.
\end{proof}

\begin{thm}\label{maingp} Let $G$ be a finitely presented group with 
a normal abelian subgroup $M$ so that $H=G/M$ is torsion-free 
nilpotent of class 2. Then there exist normal subgroups $M_i$ of $G$, 
lying in $M$, and normal subgroups $C_i$ of $H$ so 
that
\begin{enumerate}
 \item  $H/C_i$ is virtually a central product 
of Heisenberg and cyclic groups;
  \item $C_i$ acts nilpotently on 
$M_i$;
  \item if $C=\cap _i C_i$ then $C$ acts nilpotently on 
$M$.
\end{enumerate}
\end{thm}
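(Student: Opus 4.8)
The plan is to reduce the general statement to the special case already handled in Proposition \ref{submod} by a Noetherian induction on submodules of $M$, using the essential-submodule machinery of Lemma \ref{artrees}. First I would observe that $\mathbb ZH$ is Noetherian (since $H$ is polycyclic), so $M$ is a Noetherian $\mathbb ZH$-module, and hence $M$ has a finite set of associated primes; more usefully, among the non-zero $G$-normal submodules of $M$ on which we can run the argument, there is a "largest" configuration in the sense that we can choose, inside $M$, a finite collection of non-zero $G$-normal submodules $M_i$ whose sum is essential in $M$. The mechanism is: repeatedly pick a non-zero $G$-normal submodule of $M$ complementing (in the essential sense) what has been found so far; this terminates by the ascending chain condition, and yields $M_1,\dots,M_r$ with $M_1\oplus\cdots$ (or rather $\sum_i M_i$) essential in $M$.

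The heart of the matter is producing, for each piece, a subgroup $C_i$ of $H$ acting nilpotently on a suitable submodule, together with the central-product structure on $H/C_i$. For this I would apply Proposition \ref{submod} to each $M_i$. That proposition requires that $M_i \wedge M_i$ be finitely generated as a $\mathbb ZG$-module, which is exactly where the finite presentation of $G$ enters: $H_2(M,\mathbb Z)=M\wedge_{\mathbb Z}M$ is finitely generated as $\mathbb ZG$-module, and Lemma \ref{wedge} lets us pass to a non-zero submodule $N_i$ of $M_i$ (torsion-free, or of prime exponent after a further shrink) for which $N_i\wedge N_i$ is still finitely generated. Proposition \ref{submod} then hands us a subgroup $G_0$ of finite index in $G$ and a non-zero $G_0$-normal submodule $N$ of $M_i$ such that $G$ modulo the centraliser $C_i$ of $N$ is virtually a central product of Heisenberg and cyclic groups. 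Replacing $M_i$ by this $N$ (still a non-zero $G$-normal submodule of the original, after intersecting over the finitely many conjugates to restore full $G$-normality, or by working with $G_0$-normal submodules throughout and noting indices are finite) we get condition (1), and condition (2) — that $C_i$ acts nilpotently (indeed trivially) on $M_i$ — holds by construction since $C_i$ is the centraliser of $M_i$.

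Finally, for condition (3), set $C=\bigcap_i C_i$. Then $C$ acts nilpotently (trivially) on each $M_i$, hence on the essential submodule $\sum_i M_i$ of $M$; since $\sum_i M_i$ is essential in $M$ and $C$ is normal in $H$ (being an intersection of normal centralisers), Lemma \ref{artrees} applied with $K=C$ shows $C$ acts nilpotently on all of $M$. I expect the main obstacle to be the bookkeeping around normality and finite-index passage: Proposition \ref{submod} produces only a $G_0$-normal submodule for a finite-index $G_0$, and Lemma \ref{wedge} may force a further descent to prime-exponent submodules, so some care is needed to ensure the $M_i$ can be chosen $G$-normal (or to carry an unavoidable finite-index subgroup of $G$ through the whole argument) while keeping the family $\{\sum_i M_i\}$ essential in $M$ and keeping the $\wedge$-finite-generation hypothesis intact under these reductions. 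Everything else is a routine combination of Noetherian induction, the Artin–Rees lemma, and the already-established Proposition \ref{submod}.
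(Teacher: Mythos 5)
Your overall strategy matches the paper's: find finitely many pieces whose sum is essential, run Lemma \ref{wedge} and Proposition \ref{submod} on each to get centralisers $C_i$ with the right quotient structure, then propagate nilpotency of $C=\bigcap_i C_i$ from the essential submodule to $M$ via Lemma \ref{artrees}. However, there is a genuine gap in the middle of your argument, and it is precisely the point the paper's proof is engineered to handle.

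You propose to choose $M_1,\dots,M_r$ with $\sum_i M_i$ essential, then invoke Proposition \ref{submod} to obtain a non-zero submodule $N_i\le M_i$ whose centraliser $C_i$ has the desired quotient structure, and then to \emph{replace} $M_i$ by $N_i$. But after that replacement nothing guarantees that $\sum_i N_i$ is still essential in $M$: a non-zero submodule of $M_i$ need not be essential in $M_i$, so the sum can fail to meet some non-zero submodule of $M$, and then your final appeal to Lemma \ref{artrees} with $K=C$ and $N=\sum_i N_i$ collapses. Equivalently, if you decline to replace $M_i$ by $N_i$ and keep the original $M_i$, you have only that $C_i$ centralises $N_i$, not $M_i$, and you have no mechanism to upgrade this to nilpotent action on $M_i$. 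Also note that $C_i$ being the centraliser of $N_i$ (not of $M_i$) means condition (2) does not ``hold by construction'' as you claim.

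The device you are missing is to choose the $M_i$ to be \emph{uniform}: the paper takes a maximal finite direct sum of uniform $\mathbb{Z}H$-submodules $M_i$ (which exists because $\mathbb{Z}H$ is Noetherian), so that $M'=\sum_i M_i$ is essential in $M$. Then the submodule $N_i$ produced by Proposition \ref{submod} is automatically essential in the uniform module $M_i$, so Lemma \ref{artrees} applies at the level of each piece to show that $C_i$ acts nilpotently on all of $M_i$ (not merely on $N_i$), and hence $C$ acts nilpotently on $M'$. A second application of Lemma \ref{artrees} with $M'$ essential in $M$ finishes. Without uniformity you lose the essentiality of $N_i$ in $M_i$ at the first step, and the argument does not close. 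The secondary concern you raise about $G_0$-normality versus $G$-normality and finite-index passage is a real bookkeeping point, but it is subordinate to this structural gap.
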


\begin{proof}We shall consider $M$ as 
$\mathbb ZH$-module. Since $\mathbb ZH$ is Noetherian, we can find a 
maximal finite direct sum of non-zero submodules $M_i$ of $M$ so that 
each $M_i$ is uniform. Then the sum $M'$ of the $M_i$ is necessarily 
essential.

Each $M_i$ necessarily has a non-zero submodule which is 
either of prime exponent or is torsion-free and, by Lemma 
\ref{wedge}, therefore has a non-zero submodule $N'_i$ so that 
$N'_i\wedge N'_i$ is finitely generated as $\mathbb ZG$-submodule. 
Thus, by Proposition \ref{submod}, there is a subgroup $H_i$ of 
finite index in $H$ and a non-zero $\mathbb ZH$-submodule $N_i$
of 
$N'_i$ so that if $C_i$ denotes the centraliser of $N_i$ then $H/C_i$ 
is a central product of groups which are either cyclic or 
Heisenberg.

But $N_i$ is a non-zero submodule of the uniform module 
$M_i$ and so is essential. Thus, by Lemma \ref{artrees}, $C_i$ acts 
nilpotently on $M_i$. Thus, if $C$ is the intersection of the $C_i$, 
then $C$ acts nilpotently on the sum $M'$ of the $M_i$. But $M'$ is 
an essential submodule of $M$ and so $C$ acts nilpotently on 
$M$.

\end{proof}

\begin{coroll} Let $G$ be a finitely presented 
groups which is an extension of an abelian normal subgroup by a group 
which is torsion-free nilpotent of class 2. Suppose that any two 
non-trivial normal subgroups of $G$ have non-trivial intersection. 
Let $F$ denote the Fitting subgroup of $G$. Then $G/F$ has a subgroup 
of finite index which is a central product of groups which are cyclic 
or Heisenberg.

\end{coroll}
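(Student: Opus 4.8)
The plan is to apply Theorem~\ref{maingp} to $G$ (with $M$ the abelian normal subgroup and $H=G/M$), use the subdirect irreducibility of $G$ to collapse the list of subgroups it produces to a single pair, and then show that the single remaining subgroup of $H$ is exactly $F/M$.

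Theorem~\ref{maingp} supplies non-zero $\mathbb{Z}G$-submodules $M_1,\dots,M_t$ of $M$ and normal subgroups $C_i\trianglelefteq H$ with $H/C_i$ virtually a central product of Heisenberg and cyclic groups, $C_i$ acting nilpotently on $M_i$, and $\bigcap_iC_i$ acting nilpotently on $M$; inspecting its proof, the $M_i$ may be taken to form a direct sum of uniform submodules, with $C_i$ the centraliser in $H$ of an essential $\mathbb{Z}H$-submodule $N_i\le M_i$. Since any two non-zero submodules of $M$ are non-trivial normal subgroups of $G$, subdirect irreducibility forces $M$ to be a uniform $\mathbb{Z}G$-module, so the above direct sum has a single summand: $t=1$ (the case $M=1$ being trivial). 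Thus we have a single $C_1=C_H(N_1)$ with $N_1$ essential in $M$, $H/C_1$ virtually a central product of Heisenberg and cyclic groups, and $C_1$ acting nilpotently on $M$.

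It remains to prove $F/M=C_1$; note $M\le F$ since $M$ is abelian and normal. The inclusion $C_1\le F/M$ is easy: the preimage of $C_1$ in $G$ is an extension of the abelian group $M$ by the nilpotent (class $\le 2$) group $C_1$ with nilpotent action, hence is nilpotent and normal in $G$, so it lies in $F$. For the reverse inclusion I would first observe that $F$ is nilpotent — this follows from the maximal condition on subgroups of the finitely generated nilpotent group $H$ together with Fitting's theorem on products of normal nilpotent subgroups, which show that the normal subgroups $D$ of $H$ with nilpotent preimage in $G$ have a largest member, whose preimage must be $F$. Consequently $F/M$ acts nilpotently on $M$, hence on $N_1$. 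Now pass to the holonomic module $\widehat N=N_1\otimes_{\mathbb{Z}Z/P}L$ over $LA$ constructed in the proof of Proposition~\ref{submod}, where $Z$ is the centre of $H$, $P$ a suitable prime of $\mathbb{Z}Z$, $L$ the field of fractions of $\mathbb{Z}Z/P$ and $A=H/Z$. A tensoring argument shows that the image of $F/M$ in $A$ acts nilpotently on $\widehat N$; but $A$ is abelian, so this image spans a commutative sub-crossed-product of $LA$, and a non-trivial subgroup cannot act nilpotently on the non-zero module $\widehat N$, which is torsion-free over that sub-crossed-product by the holonomic hypothesis. Hence the image of $F/M$ in $A$ is trivial, i.e. $F/M\le Z$; and an element of $Z$ acting nilpotently on $N_1$ acts through the domain $\mathbb{Z}Z/P$, over which $N_1$ is torsion-free, hence acts trivially. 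Therefore $F/M\le C_H(N_1)=C_1$, and equality follows.

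Thus $G/F=H/C_1$, which is virtually a central product of Heisenberg and cyclic groups, as required. The crux is the reverse inclusion $F/M\le C_1$: the easy inclusion only shows $G/F$ is a quotient of $H/C_1$, and being virtually a central product of Heisenberg and cyclic groups is not inherited by quotients (for example a Heisenberg group modulo a proper non-trivial subgroup of its centre acquires torsion in its centre). What prevents $F/M$ from being strictly larger than $C_1$ is precisely the rigidity of the holonomic module $\widehat N$ — its torsion-freeness over every commutative sub-crossed-product.
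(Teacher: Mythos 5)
Your proposal is correct, and its skeleton is the paper's: specialise Theorem \ref{maingp}, observing that subdirect irreducibility makes $M$ a uniform module, so the direct sum of uniform submodules in that proof has a single summand and one may take $M_1=M$. The paper's own proof consists of exactly that one sentence and leaves implicit how $G/F$ is identified with $H/C_1$. What you add -- the nilpotence of $F$ via the maximal condition in $H$ and Fitting's theorem, the transfer of the nilpotent action of $F/M$ to the strongly holonomic module $\widehat N$ over $LA$, and the use of torsion-freeness over commutative sub-crossed products to force $F/M\le Z$ and then $F/M\le C_H(N_1)=C_1$ -- is sound, and it is genuinely needed unless one knows that being virtually a central product of Heisenberg and cyclic groups passes to the relevant quotients, a point the paper does not address. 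So your route agrees with the paper's but is more complete precisely where the paper is terse.

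Two small repairs. First, the step ``$A$ is abelian, so this image spans a commutative sub-crossed-product of $LA$'' is not right as stated: with $L$ central, a sub-crossed product $LB$ is commutative only when the commutation bicharacter vanishes on $B$, not merely because $A$ is abelian (otherwise the whole quantum torus would be commutative). But your argument only needs one element at a time: if $g\in F/M$ has non-trivial image $a$ in $A=H/Z$, then $L\langle a\rangle$ \emph{is} commutative (the commutation bicharacter is alternating, hence trivial on a cyclic subgroup), and $(\bar g-1)^k$ is a non-zero element of $L\langle a\rangle$ annihilating $\widehat N\neq 0$, contradicting the second condition in the definition of strongly holonomic; hence the image of $F/M$ in $A$ is trivial, as you want. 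Second, your parenthetical justification that the class is not quotient-closed is not a counterexample: a Heisenberg group of Hirsch length $3$ modulo a finite-index subgroup of its centre is virtually free abelian of rank two, hence still virtually a central product of cyclic groups. The caution itself is legitimate -- quotient-closedness is neither obvious nor proved in the paper -- but it should not rest on that example.
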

\begin{proof} This is simply the case of  Theorem 
\ref{maingp} in which one of the subgroups $M_i$ can be taken equal 
to $M$.
\end{proof}


\providecommand{\bysame}{\leavevmode\hbox 
to3em{\hrulefill}\thinspace}
\providecommand{\MR}{\relax\ifhmode\unskip\space\fi MR }
\providecommand{\MRhref}[2]{%
 
\href{http://www.ams.org/mathscinet-getitem?mr=#1}{#2}
}
\providecommand{\href}[2]{#2}

\end{document}